\newcommand{\bI}{\textbf{bI}}
\newcommand{\nbI}{\textbf{nbI}}
\newcommand{\nbIciw}{\textbf{nbIciw}}
\newcommand{\nbIci}{\textbf{nbIci}}
\newcommand{\nbIcl}{\textbf{nbIcl}}
\newcommand{\mbC}{\textbf{mbC}}
\newcommand{\mbCci}{\textbf{mbCci}}
\newcommand{\mbCcl}{\textbf{mbCcl}}
\newcommand{\mbCciw}{\textbf{mbCciw}}
\newtheorem{teo}{Theorem}
\newtheorem{mydef}{Definition}
\newtheorem{prop}{Proposition}
\newtheorem{lema}{Lemma}
\newtheorem{cor}{Corollary}
\newtheorem{rem}{Remark}
\newcommand{\lfis}{{\bf LFI}s}
\newcommand{\lfi}{{\bf LFI}}
\newcommand{\cplp}{\text{\bf CPL$^+$}}
\newcommand{\cpl}{\text{\bf CPL}}
\newcommand{\cons}{\ensuremath{{\circ}}}
\newcommand{\mbc}{\textbf{mbC}}
\newcommand{\inc}{\ensuremath{{\uparrow}}}
\newcommand{\MP}{\text{\bf MP}}
\newcommand{\sneg}{\ensuremath{{\sim}}}
\newcommand{\A}{\ensuremath{\mathcal{A}}}
\newcommand{\M}{\ensuremath{\mathcal{M}}}
\newcommand{\F}{\ensuremath{\mathcal{F}}}
\title{\textbf{From inconsistency to incompatibility}}
\author{Coniglio, Marcelo E.\thanks{coniglio@unicamp.br} }
\author{Toledo, Guilherme V.\thanks{guivtoledo@gmail.com}}
\affil{Institute of Philosophy and the Humanities - IFCH and\\
Centre for Logic, Epistemology and The History of Science - CLE\\
University of Campinas - Unicamp\\
Campinas, SP, Brazil}
\providecommand{\keywords}[1]{\textbf{\textit{Keywords:}} #1}
\begin{document}

\setcounter{page}{1}     

\maketitle

\begin{abstract}
The aim of  this article is to generalize logics of formal inconsistency (\lfis) to systems dealing with the concept of incompatibility, expressed by means of a binary connective. The basic idea is that having two incompatible formulas to hold trivializes a deduction, and as a special case, a formula becomes consistent (in the sense of \lfis) when it is incompatible with its own negation. We show how this notion extends that of consistency in a non-trivial way, presenting conservative translations for many simple \lfis\ into some of the most basic logics of incompatibility, what evidences in a precise way how the notion of incompatibility generalizes that of consistency.
We provide semantics for the new logics, as well as decision procedures, based on restricted non-deterministic matrices. The  use of non-deterministic semantics with restrictions is justified by the fact that, as proved here, these systems are not algebraizable according to Blok-Pigozzi nor are they characterizable by finite Nmatrices. Finally, we briefly compare our logics to other systems focused on treating incompatibility, specially those pioneered by Brandom and further developed by Peregrin. 
\end{abstract}

\keywords{Incompatibility, Paraconsistent logics, Non-deterministic matrices, Restricted non-deterministic matrices.}

\section*{Introduction}

Among paraconsistent logics, those of formal inconsistency ($\textbf{LFI}$, \cite{Taxonomy,Handbook, ParLog}) play a prominent role. Their defining property is the mediation of \textit{ex falso quodlibet} by a consistency connective, meaning that $\alpha, \neg\alpha\vdash\beta$ may not longer be true (that is, $\neg$ is a paraconsistent negation), but $\circ\alpha, \alpha, \neg\alpha\vdash\beta$ always holds. In other words, $\alpha$ and $\neg\alpha$, classically, trivialize an argument while, in a given \lfi, one must also assume $\circ\alpha$ in order to trivialize. The formula $\cons\alpha$ intuitively  states that $\alpha$ is `consistent' or `robust' or `classically-behaved' w.r.t. the explosion law of negation, and therefore it satisfies such law (in a local way).

A natural generalization, which we address here, deals with incompatibility. Classically, incompatibility is an important concept, having even its own connective in classical propositional logic ($\textbf{CPL}$), the Sheffer's stroke $\Uparrow$. Inded, from $\alpha{\Uparrow} \beta$, which is equivalent to $\sneg(\alpha \land\beta)$ (with $\sneg$ being the classical negation),  together with $\alpha$ and $\beta$, everything follows (or, in other words, the set $\{\alpha{\Uparrow} \beta,\alpha,\beta\}$ is unsatisfiable in \cpl).  In more contemporary studies (\cite{Brandom, Peregrin1, Peregrin2}), incompatibility has been contemplated as an alternative foundation for (classical) logic, to replace logical deduction, and as one of the cornerstones of epistemology itself. And, although very relevant, these studies never seem to consider the interplay between incompatibility and paraconsistent negations, interplay we show to be most fruitful.

In fact, consider once again the case of \lfis: the controlled explosion law $\circ\alpha, \alpha, \neg\alpha\vdash\gamma$ is made possible by $\circ\alpha$, which asserts the `classicality' or `robustness' of $\alpha$. But that robustness means, essentially, that $\alpha$ and $\neg\alpha$ can not hold together. Let $\alpha\inc\beta$ mean, in a broader interpretation of the Sheffer's stroke $\Uparrow$, simply that $\alpha$ and $\beta$ are logically incompatible, by which we mean that $\alpha\inc\beta, \alpha, \beta\vdash\gamma$. Notice that we make no mention of negation in this definition, although the spirit of incompatibility remains: whenever $\alpha$ and $\beta$ are incompatible, $\alpha$ and $\beta$ together trivialize any argument.
 
 The shape of our axiom is not arbitrary, as it takes inspiration from \lfis. Even more, if one understands the consistency (or classicality) of $\alpha$ as the incompatibility between $\alpha$ and $\neg\alpha$, the more general $\alpha\inc\beta, \alpha, \beta\vdash\gamma$ reduces back to $\cons\alpha, \alpha, \beta\vdash\gamma$  for $\circ\alpha=\alpha\inc\neg\alpha$. Is in this specific sense that we postulate that the logical notion of incompatibility expressed by $\inc$ strictly generalizes the notion of consistency (or classicality) expressed by $\circ$. By analogy with \lfis, a logic with an incompatibily operator \inc\ (primitive or not) will be called a {\em logic of (formal) incompatibility}.

This article is organized as follows: In Section~\ref{first} we present a very short introduction to \lfis\ and some of their most relevant systems. In Section~\ref{second} we introduce the incompatibility connective \inc, as well as the simplest logic of formal incompatibility  based on \cplp, the system \bI. We characterize it semantically, first by use of bivaluations and then by restricted Nmatrices (or RNmatrices), a semantical framework we introduce in~\cite{CostaRNmatrix}.
Based on this, we offer two decision methods for \bI.
In Section~\ref{third} we add a paraconsistent negation to the logics of incompatibility. The first of these systems, \nbI, can be seen as an expansion by the connective \inc\ of \mbc, the basic \lfi\ studied in~\cite{Handbook} and~\cite{ParLog}, by taking $\cons\alpha$ as an abbreviation for $\alpha\inc\neg\alpha$. Then, \nbI\ is extended to $\textbf{nbIciw}$, $\textbf{nbIci}$ and $\textbf{nbIcl}$, whose incompatibilities (expressed by \inc) have power similar to consistency (expressed by $\cons$) in, respectively, $\textbf{mbCciw}$, $\textbf{mbCci}$ and $\textbf{mbCcl}$. To all these systems we offer semantics of bivaluations and RNmatrices, as well as decision methods. In Section~\ref{fourth} several uncharacterizability results for logics of formal  incompatibility are obtained. In particular, it is shown that none of the systems presented here is algebraizable in the sense of Blok and Pigozzi. Moreover, neither \bI\ nor \nbI\ can be characterized by a single finite Nmatrix (recalling that \mbc\ can be characterized by a 5-valued and even by a 3-valued Nmatrix, as shown by Avron in~\cite[Theorem~3.6]{avr:05}). These results justify the use of RNmatrices to deal with the systems presented here. In Section~\ref{fifth} the relation between \lfis\ and logics of formal incompatibility is analyzed by means of (conservative) translations (as defined in~\cite{Itala}). In a sense, this shows that incompatibility indeed strictly generalizes inconsistency. Finally, in Section~\ref{sixth}, a brief comparison between our own systems and those of Brandom is given, stressing their differences. We end the paper by discussing the results obtained here, as well as some possibilities of future research.


\section{The Paraconsistent Logic $\textbf{mbC}$ and Some of  its Extensions}\label{first}

A Tarskian logic $\mathscr{L}$ with a consequence relation $\vdash_{\mathscr{L}}$ is said to be paraconsistent when it possesses a unary connective $\neg$ (primitive or defined), that we shall refer to as a paraconsistent negation, such that there exist formulas $\alpha$ and $\beta$ of $\mathscr{L}$ satisfying $\alpha, \neg\alpha\not\vdash_{\mathscr{L}}\beta$.\footnote{We can be more precise and say that $\mathscr{L}$ is $\neg$-paraconsistent. This makes sense when $\mathscr{L}$ has more than one negation as, for instance, in the case of the logics of formal inconsistency described in this section.}

The paraconsistent logic we will analyze in this paper belong to the class of logics known as {\em logics of formal inconsistency} (\lfis), introduced in~\cite{Taxonomy} (see also~\cite{Handbook,ParLog}). The basic strategy of \lfis\ of controlling the explosion law locally, by means of a `consistency' connective $\circ$, was introduced by Newton da Costa in his landmark treatise~\cite{daCosta}. For instance, in his stronger system $C_1$ the `classicality' (or `well-behavior') of a sentence $\alpha$ is expressed by the formula $\cons\alpha=\neg(\alpha \land \neg\alpha)$. The novelty of the \lfis\ is that such operator \cons\ can be a primitive one, which allows to explore different degrees of paraconsistency.

In addition to having a primitive consistency operator, the logic \mbc, the basic \lfi\ studied in~\cite{Handbook,ParLog},  contains only the positive fragment of classical propositional logic (\cpl) and excluded middle, together with the controlled explosion law mentioned in the Introduction. In this way, \mbc\ is often regarded as the simplest logic of formal inconsistency. It is defined by means of a Hilbert calculus over the signature we will denote by $\Sigma_{\textbf{LFI}}$, given by  $\Sigma_{\textbf{LFI}}=\{\vee, \wedge, \rightarrow,\neg, \circ\}$. The Hilbert calculus for \mbc\ has as axiom schemata

\begin{enumerate}[wide=0pt, leftmargin=*]
\item[\textbf{Ax 1}] \ $\alpha\rightarrow(\beta\rightarrow\alpha)$;
\item[\textbf{Ax 2}] \ $\big(\alpha\rightarrow (\beta\rightarrow \gamma)\big)\rightarrow\big((\alpha\rightarrow\beta)\rightarrow(\alpha\rightarrow\gamma)\big)$;
\item[\textbf{Ax 3}] \ $\alpha\rightarrow\big(\beta\rightarrow(\alpha\wedge\beta)\big)$;
\item[\textbf{Ax 4}] \ $(\alpha\wedge\beta)\rightarrow \alpha$;
\item[\textbf{Ax 5}] \ $(\alpha\wedge\beta)\rightarrow \beta$;
\item[\textbf{Ax 6}] \ $\alpha\rightarrow(\alpha\vee\beta)$;
\item[\textbf{Ax 7}] \ $\beta\rightarrow(\alpha\vee\beta)$;
\item[\textbf{Ax 8}] \ $(\alpha\rightarrow\gamma)\rightarrow\Big((\beta\rightarrow\gamma)\rightarrow \big((\alpha\vee\beta)\rightarrow\gamma\big)\Big)$;
\item[\textbf{Ax 9}] \ $(\alpha\rightarrow \beta)\vee\alpha$;
\item[\textbf{Ax 10}] \ $\alpha\vee\neg \alpha$,
\end{enumerate}

\noindent plus the controlled (or gentle) explosion law
\[\tag{\textbf{bc1}}\circ\alpha\rightarrow(\alpha\rightarrow(\neg \alpha\rightarrow \beta)),\]
and {\em Modus Ponens} (\MP) as the unique inference rule. Other logics of formal inconsistency we will consider here are $\mbCciw$, $\mbCci$ and $\mbCcl$, obtained from the Hilbert system for $\mbC$ by adding, respectively, the axiom schemata

\[\tag{$\textbf{ciw}$}\circ\alpha\vee(\alpha\wedge\neg \alpha)\]
\[\tag{$\textbf{ci}$}\neg {\circ}\alpha\rightarrow(\alpha\wedge\neg \alpha)\]
\[\tag{$\textbf{cl}$}\neg(\alpha\wedge\neg \alpha)\rightarrow\circ\alpha\]


\section{Logics of Incompatibility}\label{second}

In logics with an explosive negation \sneg\ such as, for instance, classical or intuitionistic logic, a formula and its negation are not compatible, in the sense that having both $\alpha$ and ${\sim} \alpha$ to be true (or as hypothesis in a derivation, or as elements of a theory) trivializes the given argument or theory. When dealing with logics of formal inconsistency, this is no longer true: we can have $\alpha$ and the {\em paraconsistent} negation $\neg\alpha$ of $\alpha$ together without trivializing the theory or the argument, unless the additional hypothesis $\circ \alpha$ (meaning that $\alpha$ is `consistent' or `classical') is also present.

To formalize such a notion of incompatibility, we will consider a binary connective that, when connecting formulas $\alpha$ and $\beta$, will stand intuitively for `$\alpha$ is incompatible with $\beta$'. When choosing a symbol for this connective, a natural choice is \inc, in analogy to the Sheffer's stroke $\Uparrow$. The basic axiom we will expect a system for incompatibility to satisfy will be
\[(\alpha\inc\beta)\rightarrow(\alpha\rightarrow(\beta\rightarrow\gamma)),\]
for any formula $\gamma$, or more generally, if we do not have a deduction meta-theorem, $\alpha\inc\beta, \alpha, \beta\vdash_{\mathscr{L}}\gamma$. Intuitively, that means that having $\alpha$ and $\beta$ to be true while having $\alpha$ and $\beta$ to be incompatible implies any formula can be derived: the logic becomes trivial in such circumstances.

Referring back to \lfis, one sees that `consistency' or `classicality' may be characterized as an special case of incompatibility: $\alpha$ is consistent, expressed by $\cons\alpha$, if and only if, $\alpha$ is incompatible with $\neg\alpha$. From now on we will consider the signature $\Sigma_{\bI}=\{\vee, \wedge, \rightarrow, \inc\}$.


\subsection{The Logic $\bI$}

Our quintessential logic of incompatibility, which we shall denote by $\bI$, has only Modus Ponens as  inference rule and consists of the axiom schemata of the positive fragment of classical propositional logic, that is, axiom schemata $1$ trough $9$ of $\textbf{mbC}$, plus
\[\tag{\textbf{Ip}}(\alpha\inc\beta)\rightarrow(\alpha\rightarrow(\beta\rightarrow \gamma))\]
\[\tag{\textbf{Comm}}(\alpha\inc\beta)\rightarrow(\beta\inc\alpha)\]

It is easy to see that $\bI$ has a bottom formula, and so a classical negation  can be defined as usual. Indeed, for any two formulas $\alpha$ and $\beta$ in the language of $\bI$, let $\bot_{\alpha\beta}=(\alpha\wedge\beta)\wedge(\alpha\inc\beta)$.

\begin{lema} \label{L1}
\begin{enumerate}
\item If $\Gamma\vdash_{\bI}\alpha\rightarrow\beta$ and $\Gamma\vdash_{\bI}\beta\rightarrow\gamma$, then $\Gamma\vdash_{\bI}\alpha\rightarrow\gamma$.
\item $\Gamma, \alpha\vdash_{\bI}\beta$ if and only if $\Gamma\vdash_{\bI}\alpha\rightarrow\beta$ (deduction meta-theorem).
\item If $\Gamma, \alpha\vdash_{\bI}\varphi$ and $\Gamma, \beta\vdash_{\bI}\varphi$, then $\Gamma, \alpha\vee\beta\vdash_{\bI}\varphi$ (proof-by-cases).
\end{enumerate}
\end{lema}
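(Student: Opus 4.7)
The plan is to observe that $\bI$ contains axioms \textbf{Ax 1}-\textbf{Ax 9} together with Modus Ponens, and that $\MP$ remains the sole inference rule of $\bI$; this is precisely a Hilbert calculus for the positive fragment $\cplp$ of classical propositional logic, so all three items are instances of entirely standard results about $\cplp$. I would prove item (2) first and then derive items (1) and (3) from it.

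For the deduction meta-theorem (2), the direction from right to left is immediate via a single application of $\MP$ (namely, apply $\MP$ to $\alpha$ and $\alpha\rightarrow\beta$). For the converse, I would proceed by induction on the length of a derivation of $\beta$ from $\Gamma\cup\{\alpha\}$. When $\beta\in\Gamma$ or $\beta$ is an instance of any axiom schema of $\bI$ (including the incompatibility axioms $\textbf{Ip}$ and $\textbf{Comm}$), I apply $\textbf{Ax 1}$ together with $\MP$ to produce $\alpha\rightarrow\beta$. When $\beta=\alpha$, I derive $\alpha\rightarrow\alpha$ from $\textbf{Ax 1}$ and $\textbf{Ax 2}$ in the standard way. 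In the inductive step, where $\beta$ is obtained by $\MP$ from some $\delta$ and $\delta\rightarrow\beta$, I invoke the induction hypothesis on both premises together with $\textbf{Ax 2}$ and two further applications of $\MP$.

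For item (1), assuming $\Gamma\vdash_{\bI}\alpha\rightarrow\beta$ and $\Gamma\vdash_{\bI}\beta\rightarrow\gamma$, I apply (2) to deduce $\Gamma,\alpha\vdash_{\bI}\beta$, chain with $\beta\rightarrow\gamma$ via $\MP$ to obtain $\Gamma,\alpha\vdash_{\bI}\gamma$, and conclude by one more use of (2). For item (3), assuming $\Gamma,\alpha\vdash_{\bI}\varphi$ and $\Gamma,\beta\vdash_{\bI}\varphi$, I use (2) to obtain $\Gamma\vdash_{\bI}\alpha\rightarrow\varphi$ and $\Gamma\vdash_{\bI}\beta\rightarrow\varphi$, then apply $\textbf{Ax 8}$ together with two instances of $\MP$ to deduce $\Gamma\vdash_{\bI}(\alpha\vee\beta)\rightarrow\varphi$, from which the conclusion follows by adding $\alpha\vee\beta$ as a hypothesis and applying $\MP$. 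The only point worth flagging --- hardly an obstacle --- is that the induction in (2) treats every axiom schema uniformly through $\textbf{Ax 1}$, so the new schemata $\textbf{Ip}$ and $\textbf{Comm}$ pose no difficulty; the fact that $\MP$ is still the unique inference rule is what makes the standard argument survive verbatim in $\bI$.
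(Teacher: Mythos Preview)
Your proposal is correct and follows essentially the same approach as the paper, which simply notes that the result is straightforward from the fact that $\bI$ contains $\cplp$ and that $\MP$ is the unique inference rule. You have merely supplied the standard details that the paper omits.
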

\begin{proof}It is straightforward, from the fact that \bI\ contains \cplp, and \MP\ is the unique inference rule.\qedhere
\end{proof}

\begin{prop}
For any formulas $\alpha$, $\beta$ and $\varphi$ it holds: $\bot_{\alpha\beta}\vdash_{\bI}\varphi$.
\end{prop}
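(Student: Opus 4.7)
The plan is straightforward: unpack the definition of $\bot_{\alpha\beta}$ as $(\alpha\wedge\beta)\wedge(\alpha\inc\beta)$ and extract its three informational components, then feed them into axiom \textbf{Ip}.

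First, I would apply axioms \textbf{Ax 4} and \textbf{Ax 5} (conjunction elimination), together with Modus Ponens, to the hypothesis $\bot_{\alpha\beta}$ to obtain $\alpha\wedge\beta$ and $\alpha\inc\beta$ separately. A second application of \textbf{Ax 4} and \textbf{Ax 5} to $\alpha\wedge\beta$ yields $\alpha$ and $\beta$ individually. So from the single premise $\bot_{\alpha\beta}$ the derivation produces $\alpha$, $\beta$, and $\alpha\inc\beta$.

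Next, instantiate axiom \textbf{Ip} with $\gamma:=\varphi$ to obtain the theorem $(\alpha\inc\beta)\rightarrow(\alpha\rightarrow(\beta\rightarrow\varphi))$. Applying Modus Ponens with $\alpha\inc\beta$ gives $\alpha\rightarrow(\beta\rightarrow\varphi)$; a further Modus Ponens with $\alpha$ yields $\beta\rightarrow\varphi$; and a final Modus Ponens with $\beta$ delivers $\varphi$. This completes the deduction of $\varphi$ from $\bot_{\alpha\beta}$ in $\bI$.

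There is essentially no obstacle here: the argument is a direct Hilbert-style derivation using only positive classical axioms (which $\bI$ inherits from \cplp) together with the single axiom \textbf{Ip}. The role of axiom \textbf{Comm} is not needed. If one prefers a presentation via the deduction meta-theorem (Lemma~\ref{L1}(2)), the same argument produces $\vdash_{\bI} \bot_{\alpha\beta} \rightarrow \varphi$, which is equivalent to the desired statement.
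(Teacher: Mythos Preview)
Your proof is correct and follows essentially the same approach as the paper: use \textbf{Ip} (together with \MP\ or, as the paper phrases it, Lemma~\ref{L1}) to obtain $\alpha,\beta,\alpha\inc\beta\vdash_{\bI}\varphi$, and use \textbf{Ax 4} and \textbf{Ax 5} to extract these three formulas from $\bot_{\alpha\beta}$. The only cosmetic difference is that the paper states the derivation in two lines by invoking Lemma~\ref{L1}, whereas you spell out the \MP\ steps explicitly.
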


\begin{proof}
From $\textbf{Ip}$ and Lemma~\ref{L1}, $\alpha, \beta, \alpha\inc\beta\vdash_{\bI}\varphi$, and by axiom schemata $\textbf{Ax  4}$ and $\textbf{Ax  5}$, we obtain the desired result.\qedhere
\end{proof}

In particular, we find that all such bottom elements are equivalent to each other, so we may define ${\sim}\alpha$ as the formula $\alpha\rightarrow\bot_{\alpha\alpha}$, which is easily seem to have the same properties as the negation of $\textbf{CPL}$ does, namely: $\alpha, \sneg\alpha\vdash_{\bI}\beta$ for every $\alpha$ and $\beta$; $\vdash_{\bI}\alpha \vee \sneg\alpha$; and $\sneg\alpha \vdash_{\bI}\sneg\beta$ whenever  $\beta \vdash_{\bI}\alpha$. By defining, for a formula $\alpha$ in $\bI$, $\top_{\alpha}=\alpha\rightarrow\alpha$, we also have top elements, all equivalent to one another.


\subsection{Bivaluation Semantics for \bI}

A bivaluation for $\bI$ is a map $\nu:\mathbf{F}(\Sigma_{\bI}, \mathcal{V})\rightarrow \{0,1\}$ such that:
\begin{enumerate}
\item $\nu(\alpha\vee\beta)=1\Leftrightarrow\nu(\alpha)=1$ or $\nu(\beta)=1$;
\item $\nu(\alpha\wedge\beta)=1\Leftrightarrow\nu(\alpha)=\nu(\beta)=1$;
\item $\nu(\alpha\rightarrow\beta)=1\Leftrightarrow\nu(\alpha)=0$ or $\nu(\beta)=1$;
\item if $\nu(\alpha\inc\beta)=1$ then $\nu(\alpha)=0$ or $\nu(\beta)=0$;
\item $\nu(\alpha\inc\beta)=\nu(\beta\inc\alpha)$.
\end{enumerate}

Given a set of formulas $\Gamma\cup\{\varphi\}$ of $\bI$, we say that $\varphi$ is a semantical consequence of  $\Gamma$, and write $\Gamma\vDash_{\bI}\varphi$, if for every bivaluation $\nu$ for $\bI$, $\nu(\gamma)=1$ for every $\gamma \in \Gamma$ implies that $\nu(\varphi)=1$. If $\emptyset\vDash_{\bI}\varphi$, we simply write $\vDash_{\bI}\varphi$. We notice, first of all, that if $\Gamma\vDash_{\bI}\alpha$ and $\Gamma\vDash_{\bI}\alpha\rightarrow\beta$, then $\Gamma\vDash_{\bI}\beta$. Furthermore, for any instance $\varphi$ of an axiom schema of $\bI$, one finds that $\vDash_{\bI}\varphi$.

Recall that, given a logic {\bf L}, a set $\Delta$ of formulas is $\varphi$-saturated in {\bf L} if $\Delta \nvdash_{\bf L} \varphi$ but   $\Delta,\psi \vdash_{\bf L} \varphi$ for any $\psi \notin\Delta$. It is well known that, if {\bf L} is  Tarskian and finitary and $\Gamma \nvdash \varphi$, then there exists a set of formulas $\Delta$ such that $\Gamma \subseteq \Delta$ and $\Delta$ is $\varphi$-saturated in {\bf L} (see, for instance,  \cite[Theorem 22.2]{wojcicki1984lectures}). In particular, this result holds for \bI. Any $\varphi$-saturated set $\Delta$ is a closed theory in {\bf L}, that is: $\Delta \vdash_{\bf L} \psi$ iff $\psi \in \Delta$. It is easy to prove that if $\Delta$ is $\varphi$-saturated in \bI\ then the function $\nu$, from the set of formulas of $\bI$ to $\{0,1\}$ and such that $\nu(\gamma)=1$ if and only if $\gamma\in\Delta$, is a bivaluation for $\bI$. Using this we prove the following:

\begin{teo} [Soundness and Completeness of \bI\ w.r.t. bivaluations] 
Given formulas $\Gamma\cup\{\varphi\}$ in $\bI$, $\Gamma\vdash_{\bI}\varphi$ if and only if $\Gamma\vDash_{\bI}\varphi$.
\end{teo}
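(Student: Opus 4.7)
My plan is the classical Henkin-style argument, split into soundness and completeness.

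For soundness, I would proceed by induction on the length of a derivation of $\varphi$ from $\Gamma$ in $\bI$. The excerpt already records the two facts that drive the induction: every instance of an axiom schema of $\bI$ is valid, and \MP{} preserves validity. The only axioms whose validity needs a brief check beyond the positive fragment are $\textbf{Ip}$ and $\textbf{Comm}$: given a bivaluation $\nu$, if $\nu(\alpha \inc \beta) = 1$ then clause~4 gives $\nu(\alpha) = 0$ or $\nu(\beta) = 0$, so the implication $(\alpha \inc \beta) \rightarrow (\alpha \rightarrow (\beta \rightarrow \gamma))$ evaluates to $1$ by clause~3; and clause~5 gives $\textbf{Comm}$ outright.

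For completeness I would contrapose: assume $\Gamma \nvdash_{\bI} \varphi$ and construct a bivaluation $\nu$ with $\nu[\Gamma] \subseteq \{1\}$ and $\nu(\varphi) = 0$. Since $\bI$ is Tarskian and finitary (it is given by a Hilbert calculus whose unique rule is \MP), the cited Lindenbaum-{\L}o\'s result yields a $\varphi$-saturated set $\Delta \supseteq \Gamma$ in $\bI$, which is in particular a closed theory. Define $\nu : \mathbf{F}(\Sigma_{\bI}, \mathcal{V}) \to \{0,1\}$ by $\nu(\gamma) = 1$ iff $\gamma \in \Delta$. Then $\nu(\gamma) = 1$ for every $\gamma \in \Gamma$ and, since $\varphi \notin \Delta$, $\nu(\varphi) = 0$, so it only remains to verify that $\nu$ is a bivaluation.

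The main obstacle, and the only nontrivial part, is checking the five clauses of the bivaluation definition against $\varphi$-saturation. Clauses~1--3 for $\vee, \wedge, \rightarrow$ are the standard calculation that works in any $\varphi$-saturated theory of $\cplp$, using Lemma~\ref{L1} (deduction meta-theorem and proof by cases) together with axioms \textbf{Ax~3}--\textbf{Ax~8}; in particular, for disjunction one uses that if neither $\alpha$ nor $\beta$ lies in $\Delta$ then by saturation $\Delta, \alpha \vdash_{\bI} \varphi$ and $\Delta, \beta \vdash_{\bI} \varphi$, whence proof by cases and \MP{} force $\alpha \vee \beta \notin \Delta$. For clause~4, suppose $\alpha \inc \beta \in \Delta$ but $\alpha, \beta \in \Delta$; then axiom $\textbf{Ip}$ and two applications of \MP{} yield $\Delta \vdash_{\bI} \varphi$, contradicting $\varphi \notin \Delta$. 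Clause~5 follows immediately from axiom $\textbf{Comm}$ applied in both directions and closure of $\Delta$ under \MP. With $\nu$ confirmed to be a bivaluation, $\Gamma \nvDash_{\bI} \varphi$ follows, completing the proof.
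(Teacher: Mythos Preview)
Your proposal is correct and follows essentially the same approach as the paper: induction on the length of a derivation for soundness, and for completeness the contrapositive via a $\varphi$-saturated extension $\Delta \supseteq \Gamma$ whose characteristic function is the required bivaluation. If anything, your write-up is more explicit than the paper's own proof, which merely asserts that the characteristic function of a $\varphi$-saturated set is a bivaluation without spelling out the verification of clauses~1--5 as you do.
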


\begin{proof} \ \\
({\em `Only if'} direction) Proceed by induction on the length $n$ of a proof $\alpha_{1}, \ldots , \alpha_{n}=\varphi$ in \bI\ of $\varphi$ from $\Gamma$ to show that, if $\nu(\gamma)=1$ for every $\gamma \in \Gamma$, then $\nu(\alpha_{1})=\cdots=\nu(\alpha_{n})=1$.\\[1mm]
({\em `Only if'} direction) Suppose by contraposition that $\Gamma\not\vdash_{\bI}\varphi$. Then, there exists a $\varphi$-saturated set $\Delta$ in \bI\ containing  $\Gamma$. The function $\nu$, from the set of formulas of $\bI$ to $\{0,1\}$ such that $\nu(\delta)=1$ if and only if $\delta\in\Delta$, is a bivaluation for \bI. Therefore $\nu$ is a bivaluation such that $\nu(\gamma)=1$ for every $\gamma \in \Gamma$ but $\nu(\varphi)=0$, showing that  $\Gamma\nvDash_{\bI}\varphi$.\qedhere
\end{proof}


\subsection{Restricted Non-deterministic Matrices for \bI}


In~\cite{CostaRNmatrix} we introduce the notion of  restricted Nmatrices (RNmatrices), a semantical framework which generalizes Avron and Lev's non-deterministic matrices (or Nmatrices) proposed in~\cite{avr:lev:01}. These notions will be briefly recalled  below.

Let $\Theta$ be a propositional signature. A {\em logical matrix} over $\Theta$ is a pair $(\A,D)$ such that $\A=(A,\{\sigma_\A\}_{\sigma \in \Theta})$ is an algebra over $\Theta$ and $D$ is a proper non-empty subset of $A$. A {\em valuation} over a logical matrix is a homomorphism $\nu:\mathbf{F}(\Theta, \mathcal{V}) \to \A$.

A {\em non-deterministic matrix} (or {\em Nmatrix})  over $\Theta$ is a pair $(\A,D)$ such that $\A=(A,\{\sigma_\A\}_{\sigma \in \Theta})$ is a multialgebra (or hyperalgebra) over $\Theta$ (that is, $\sigma_\A:A^n \to \wp(A)\setminus \{\emptyset\}$ for every $n$-ary connective $\sigma$) and $D$ is a proper non-empty subset of $A$. A {\em valuation} over a Nmatrix is a function $\nu:\mathbf{F}(\Theta, \mathcal{V}) \to A$ such that $\nu(\sigma(\varphi_1,\ldots,\varphi_n)) \in \sigma_\A(\nu(\varphi_1), \ldots \nu(\varphi_n))$.

A {\em restricted non-deterministic matrix} ({\em restricted Nmatrix}, or {\em RNmatrix} in short) is a triple $(\A,D,\F)$ such that $(\A,D)$ is a Nmatrix over $\Theta$ and $\F$ is a non-empty set of valuations over it. A RNmatrix is said to be {\em structural} if $\nu \circ \lambda \in \F$ for every $\nu \in \F$ and every substitution $\lambda$ over $\Theta$ (that is, every endomorphism $\lambda:\mathbf{F}(\Theta, \mathcal{V}) \to \mathbf{F}(\Theta, \mathcal{V})$).

Let \M\ be a logical matrix, a Nmatrix or a RNmatrix over $\Theta$. Given $\Gamma\cup\{\varphi\} \subseteq \mathbf{F}(\Theta, \mathcal{V})$  we say that $\varphi$ follows from  $\Gamma$  according to \M, and write $\Gamma\vDash_{\M}\varphi$, if for every valuation $\nu$ over \M\ (for every $\nu\in\F$ if \M\ is a RNmatrix), $\nu(\gamma)\in D$ for every $\gamma \in \Gamma$ implies  that $\nu(\varphi)\in D$. These notions give origin to Tarskian and structural logics (in the case of RNmatrices, the RNmatrix must be structural to produce a structural logic).

Now, a RNmatrix semantics for \bI\ will be proposed. Recall first that a {\em classical implicative lattice} is the reduct of a Boolean algebra (defined over the signature $\{\land,\lor,\to,\bot\}$) to the signature $\Sigma^{\cplp}=\{\land,\lor,\to\}$. These structures are the algebraic semantics of \cplp. 
Observe that the signature $\Sigma^{\cplp}$ is obtained from $\Sigma_{\bI}$ by removing the incompatibility connective \inc.

\begin{rem} \label{F-style}
Let  $\A=(A,\{\sigma_\A\}_{\sigma \in \Theta})$ be a  multialgebra. It is worth noting that a multioperator $\sigma_\A:A^n \to \wp(A)\setminus \{\emptyset\}$ such that every set  $\sigma_\A(a_1, \ldots a_n)$ is a singleton can be seen as an ordinary (deterministic) operator $\sigma_\A:A^n \to A$. In particular, if $\sigma_\A$ is a constant (i.e., $\sigma$ is a $0$-ary connective) which is deterministic in \A\ then $\sigma_\A$ can be seen as an element of $A$. Thus, if every $\sigma_\A$ is  deterministic then \A\ can be seen as an ordinary algebra. In general, it could be expected that some operators in a given multialgebra \A\ are deterministic and some others are not. In this case, \A\ has a reduct which is an ordinary algebra.  Notorious examples of this kind of multiagebras are the (nowadays known as) {\em Fidel structures} (or {\bf F}-structures), introduced by Fidel in~\cite{fid:77} to show that da Costa's paraconsistent systems $C_n$ are decidable. As observed in~\cite[Subsection~5.1]{CostaRNmatrix}, {\bf F}-structures constitute a pioneering example of multialgebras formed by ordinary algebras  expanded with some non-deterministic operators (in his case, Boolean algebras expanded with two unary multioperators). When the set of valuations is also considered, {\bf F}-structures for $C_n$ constitute a family of RNmatrices.
\end{rem}

Because of Remark~\ref{F-style}, a multialgebra, Nmatrix or a RNmatrix having as a reduct an ordinary algebra will be called a {\em Fidel-style multialgebra} ({\em Fidel-style (R)Nmatrix}, respectively).

\begin{mydef} \label{RN4bI}
A  RNmatrix for $\bI$ is a triple $(\A,D,\F)$ where
\begin{enumerate}
\item $\mathcal{A}=(A, \{\sigma_{\mathcal{A}}\}_{\sigma\in\Sigma_{\bI}})$ is a (Fidel-style) $\Sigma_{\bI}$-multialgebra  such that:
\begin{enumerate}
\item the $\Sigma^{\cplp}$-reduct  $(A, \{\sigma_{\mathcal{A}}\}_{\sigma\in\Sigma^{\cplp}})$ is a classical implicative lattice having a bottom element $0_\A$, and so it is a Boolean algebra;\footnote{In this case the Boolean complement $\sim$ is defined as usual: $\sneg a= a \to_\A 0_\A$.}
\item  for all $a, b\in A$: $\inc_{\mathcal{A}}(a,b) = \inc_{\mathcal{A}}(b,a)$;
\item for all $a, b,c\in A$: if $c\in\inc_{\mathcal{A}}(a,b)$ then  $\wedge_{\mathcal{A}}(\wedge_{\mathcal{A}}(a,b),c)=0_\A$.
\end{enumerate}

\item $D=\{1_\A\}$.
\item The set \F\ is formed by the valuations $\nu$ over \A\ (i.e., $\nu:\mathbf{F}(\Sigma_{\bI}, \mathcal{V}) \to  \A$  is a $\Sigma_{\bI}$-homomorphism) such that $\nu(\alpha\inc\beta)=\nu(\beta\inc\alpha)$,
for any two formulas $\alpha$ and $\beta$ in $\mathbf{F}(\Sigma_{\bI}, \mathcal{V})$.
\end{enumerate}
\end{mydef}

For simplicity,  the subscript \A\ can be omitted when there is no risk of confusion, and we will use the standard infix notation for binary operators in a  Boolean algebra. Hence, the equation in item~1(c) can be written as  $a \wedge_{\mathcal{A}} b \wedge_{\mathcal{A}}c=0_\A$. The consequence relation w.r.t. RNmatrices for \bI\ will be denoted by $\Vdash_{\mathcal{F}}^{\bI}$. 

\begin{rem} Let $\nu \in \F$ and $\lambda$ be a substitution over $\Sigma_{\bI}$. Then, $\nu\circ\lambda$ is a valuation over \A. Moreover, for any formulas $\alpha$ and $\beta$,
\[\nu\circ\lambda(\alpha \inc\beta)=\nu(\lambda(\alpha) \inc \lambda(\beta)) = \nu(\lambda(\beta) \inc \lambda(\alpha)) = \nu\circ\lambda(\beta \inc\alpha)\]
and so $\nu\circ\lambda \in \F$. That is, any RNmatrix for \bI\ is structural.
\end{rem}

\begin{teo} [Soundness of \bI\ w.r.t. RNmatrices]
Given formulas $\Gamma\cup\{\varphi\}$ of $\bI$, if $\Gamma\vdash_{\bI}\varphi$ then $\Gamma\Vdash_{\mathcal{F}}^{\bI}\varphi$.
\end{teo}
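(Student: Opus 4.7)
The plan is to proceed by induction on the length $n$ of a derivation of $\varphi$ from $\Gamma$ in $\bI$, showing that for every valuation $\nu \in \F$ of an arbitrary RNmatrix $(\A,D,\F)$, if $\nu(\gamma) = 1_\A$ for all $\gamma \in \Gamma$, then each formula $\alpha_i$ appearing in the derivation satisfies $\nu(\alpha_i) = 1_\A$. The two tasks are therefore (a) checking that every axiom schema is valid under every $\nu \in \F$ in every RNmatrix, and (b) checking that \MP\ preserves validity.

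For (a), axioms $\textbf{Ax 1}$ through $\textbf{Ax 9}$ involve only the connectives in $\Sigma^{\cplp}$, so by Definition~\ref{RN4bI}(1a) their evaluation under $\nu$ takes place in the Boolean algebra reduct of \A. Since $\nu$ restricted to this subsignature is a homomorphism into a Boolean algebra and the listed schemata are theorems of \cplp, they all evaluate to $1_\A$. For axiom $\textbf{Comm}$, I use the defining condition on $\F$ in Definition~\ref{RN4bI}(3): $\nu(\alpha\inc\beta)=\nu(\beta\inc\alpha)$, hence $\nu\bigl((\alpha\inc\beta)\to(\beta\inc\alpha)\bigr) = x \to_\A x = 1_\A$ where $x = \nu(\alpha\inc\beta)$. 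The interesting case is $\textbf{Ip}$: writing $a=\nu(\alpha)$, $b=\nu(\beta)$, $c=\nu(\gamma)$ and $d=\nu(\alpha\inc\beta)$, the homomorphism condition gives $d \in \inc_\A(a,b)$, and so by Definition~\ref{RN4bI}(1c) we have $a \wedge_\A b \wedge_\A d = 0_\A$. In the Boolean reduct this means $\sneg d \vee_\A \sneg a \vee_\A \sneg b = 1_\A$, and since $d \to_\A (a \to_\A (b \to_\A c)) = \sneg d \vee_\A \sneg a \vee_\A \sneg b \vee_\A c$, we conclude that $\nu\bigl((\alpha\inc\beta)\to(\alpha\to(\beta\to\gamma))\bigr)=1_\A$ as required.

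For (b), if $\nu(\alpha) = 1_\A$ and $\nu(\alpha\to\beta) = 1_\A$, then working in the Boolean reduct, $1_\A = 1_\A \to_\A \nu(\beta) = \nu(\beta)$, so \MP\ preserves the value $1_\A$. Combining (a) and (b) with a routine induction on derivation length, any derived formula takes value $1_\A = $ the only designated element whenever all premises do, which is exactly $\Gamma \Vdash^{\bI}_{\F} \varphi$.

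The only non-obvious case is $\textbf{Ip}$, but even there the calculation is short once one exploits the crucial structural constraint in Definition~\ref{RN4bI}(1c), namely that every value $\nu(\alpha \inc \beta)$ is forced to be disjoint (in the Boolean order) from $\nu(\alpha) \wedge_\A \nu(\beta)$. The commutativity condition on $\F$ is designed precisely to validate $\textbf{Comm}$, while the $\Sigma^{\cplp}$-Boolean structure of \A\ takes care of all the positive axioms and of \MP\ uniformly; nothing here depends on $\Gamma$ being finite, so the finitary and infinitary versions of the statement have the same proof.
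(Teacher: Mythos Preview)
Your proposal is correct and follows essentially the same approach as the paper's own proof, which is given only as a brief sketch (verify each axiom schema is valid, check that \MP\ preserves validity, then induct on derivation length). Your write-up simply fills in the details the paper omits, in particular the Boolean computations for $\textbf{Ip}$ and $\textbf{Comm}$, and these are all correct.
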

\begin{proof}
Show that, for any instance of an axiom $\alpha$ of $\bI$, $\Vdash_{\mathcal{F}}^{\bI}\alpha$, and that if $\Gamma\Vdash_{\mathcal{F}}^{\bI}\alpha$ and $\Gamma\Vdash_{\mathcal{F}}^{\bI}\alpha\rightarrow\beta$, then $\Gamma\Vdash_{\mathcal{F}}^{\bI}\beta$. Proceed then by induction on the length $n$ of a proof $\alpha_{1}, \ldots  , \alpha_{n}=\varphi$ in \bI\ of $\varphi$ from $\Gamma$ to prove that, if $\nu(\Gamma)\subseteq\{1\}$, then $\nu(\alpha_{i})=1$ for every $i\in\{1, \ldots  , n\}$.\qedhere
\end{proof}

To show completeness of \bI\ w.r.t. RNmatrices we define, for a set of formulas $\Gamma$ of \bI, the following relation $\equiv_{\Gamma}^{\bI}$ between formulas: $\alpha\equiv_{\Gamma}^{\bI}\beta$ iff $\quad\Gamma\vdash_{\bI}\alpha\rightarrow\beta$ and $\Gamma\vdash_{\bI}\beta\rightarrow\alpha$. It is straightforward to prove the following:

\begin{prop}
$\equiv_{\Gamma}^{\bI}$ is a congruence with respect to any $\#\in\{\vee, \wedge, \rightarrow\}$. That is, $\equiv_{\Gamma}^{\bI}$ is an equivalence relation such that $\alpha_{1}\equiv_{\Gamma}^{\bI}\beta_{1}$ and $\alpha_{2}\equiv_{\Gamma}^{\bI}\beta_{2}$ imply $\alpha_{1}\#\beta_{1}\equiv_{\Gamma}^{\bI}\alpha_{2}\#\beta_{2}$.
\end{prop}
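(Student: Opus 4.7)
The plan is to split the proposition into its two parts: first verify that $\equiv_{\Gamma}^{\bI}$ is an equivalence relation, then establish the congruence property separately for each of the three binary connectives. Since \bI\ contains $\cplp$ and satisfies the deduction meta-theorem (Lemma~\ref{L1}(2)), all the required derivations are essentially classical and proceed by routine Hilbert-style manipulations, so the main task is just to organize them cleanly.

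For the equivalence relation part, reflexivity amounts to noting that $\vdash_{\bI}\alpha\rightarrow\alpha$ is a theorem of $\cplp$, hence of \bI, so $\Gamma\vdash_{\bI}\alpha\rightarrow\alpha$ trivially holds. Symmetry is immediate from the symmetric shape of the defining condition. Transitivity is a direct application of Lemma~\ref{L1}(1): from $\Gamma\vdash_{\bI}\alpha\rightarrow\beta$, $\Gamma\vdash_{\bI}\beta\rightarrow\gamma$ we obtain $\Gamma\vdash_{\bI}\alpha\rightarrow\gamma$, and symmetrically for the reverse implications.

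For the congruence part, fix $\#\in\{\vee,\wedge,\rightarrow\}$, assume $\alpha_{i}\equiv_{\Gamma}^{\bI}\beta_{i}$ for $i=1,2$, and derive $\Gamma\vdash_{\bI}(\alpha_{1}\#\alpha_{2})\rightarrow(\beta_{1}\#\beta_{2})$; the reverse direction then follows by symmetry of the hypotheses. Via the deduction meta-theorem it suffices, assuming $\Gamma\cup\{\alpha_{1}\#\alpha_{2}\}$, to derive $\beta_{1}\#\beta_{2}$. For $\#=\wedge$, apply \textbf{Ax 4} and \textbf{Ax 5} to extract $\alpha_{1},\alpha_{2}$, use $\alpha_{i}\rightarrow\beta_{i}$ and \MP\ to obtain $\beta_{1},\beta_{2}$, and recombine via \textbf{Ax 3}. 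For $\#=\vee$, use Lemma~\ref{L1}(3) (proof by cases): under $\alpha_{1}$ derive $\beta_{1}$ then $\beta_{1}\vee\beta_{2}$ by \textbf{Ax 6}, and symmetrically under $\alpha_{2}$ by \textbf{Ax 7}. For $\#=\rightarrow$, further assume $\beta_{1}$, derive $\alpha_{1}$ from $\beta_{1}\rightarrow\alpha_{1}$, then $\alpha_{2}$ from $\alpha_{1}\rightarrow\alpha_{2}$, and finally $\beta_{2}$ from $\alpha_{2}\rightarrow\beta_{2}$.

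I do not expect any serious obstacle: every step lives entirely inside the $\cplp$ fragment of \bI, and the incompatibility connective \inc\ is never touched. The only noteworthy point is precisely that \inc\ is absent from the list $\{\vee,\wedge,\rightarrow\}$, since the axioms \textbf{Ip} and \textbf{Comm} are too weak to guarantee substitutivity of provable equivalents inside \inc---a feature that will matter for the later uncharacterizability and non-algebraizability results announced for Section~\ref{fourth}.
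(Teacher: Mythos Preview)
Your argument is correct and is exactly the routine verification the paper has in mind; in fact the paper does not spell out a proof at all, merely remarking beforehand that the result is ``straightforward to prove''. One minor point: as stated in the paper the conclusion reads $\alpha_{1}\#\beta_{1}\equiv_{\Gamma}^{\bI}\alpha_{2}\#\beta_{2}$, which is evidently a typo for $\alpha_{1}\#\alpha_{2}\equiv_{\Gamma}^{\bI}\beta_{1}\#\beta_{2}$; you have (rightly) proved the intended statement.
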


Let $A^{\bI}_{\Gamma}=\mathbf{F}(\Sigma_{\bI}, \mathcal{V})/\equiv_{\Gamma}^{\bI}$ be  the quotient set, where $[\alpha]$ will denote the equivalence class of $\alpha$. From the fact that $\equiv_{\Gamma}^{\bI}$ is a congruence with respect to $\#\in\{\vee, \wedge, \rightarrow\}$ we get that $[\alpha]\#_{\mathcal{A}}[\beta]=[\alpha\#\beta]$ are well-defined  operations. Let $0=[\bot_{\alpha\beta}]$ and $1=[\top_{\alpha}]$ (both independent of the chosen $\alpha$ and $\beta$) and ${\sim}[\alpha]=[\alpha\rightarrow\bot_{\alpha\alpha}]$. Then, the following holds (to see a proof for the case of \mbc\ which is similar to the present one, we refer back to~\cite[Proposition~6.1.7]{ParLog}).

\begin{prop}
$\mathcal{A}=(A^{\bI}_{\Gamma}, \{\sigma_{\mathcal{A}}\}_{\sigma\in\Sigma^{\cplp}})$ is a Boolean algebra with bottom $[\bot_{\alpha\beta}]$ and top $[\top_{\alpha}]$.
\end{prop}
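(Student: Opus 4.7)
The plan is to run the standard Lindenbaum--Tarski construction for the positive fragment of classical logic, since axioms \textbf{Ax 1}--\textbf{Ax 9} together with \MP\ already constitute a complete Hilbert calculus for \cplp, and the construction on the $\Sigma^{\cplp}$-reduct is insensitive to the additional connective \inc. The partial order on $A^{\bI}_{\Gamma}$ will be defined by $[\alpha]\leq[\beta]$ iff $\Gamma\vdash_{\bI}\alpha\rightarrow\beta$; this is well defined by the definition of $\equiv_{\Gamma}^{\bI}$, reflexive by \textbf{Ax 1} (via $\Gamma\vdash_{\bI}\alpha\to\alpha$), transitive by Lemma~\ref{L1}(1), and antisymmetric by the definition of $\equiv_{\Gamma}^{\bI}$.

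Next I would check that $[\alpha\wedge\beta]$ is the infimum of $[\alpha]$ and $[\beta]$, using \textbf{Ax 4}, \textbf{Ax 5} for the lower-bound property and \textbf{Ax 3} with Lemma~\ref{L1}(2) for universality; dually $[\alpha\vee\beta]$ is the supremum using \textbf{Ax 6}, \textbf{Ax 7}, \textbf{Ax 8} and Lemma~\ref{L1}(3). For the implicative structure, I would verify the residuation identity $[\alpha\wedge\beta]\leq[\gamma]$ iff $[\alpha]\leq[\beta\rightarrow\gamma]$ using the deduction meta-theorem (Lemma~\ref{L1}(2)) in both directions together with \textbf{Ax 3}; this makes $[\beta\rightarrow\gamma]$ the relative pseudocomplement of $[\beta]$ with respect to $[\gamma]$, so we obtain an implicative lattice. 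Distributivity then follows from the Heyting-style structure in the usual way.

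For the top and bottom, $\vdash_{\bI}\alpha\rightarrow(\beta\rightarrow\beta)$ (by \textbf{Ax 1} and \MP) shows $[\alpha]\leq[\top_{\beta}]$ for every $\alpha,\beta$, so $[\top_{\beta}]$ is the top, and any two $\top_{\beta}, \top_{\beta'}$ are $\equiv_{\Gamma}^{\bI}$-equivalent. For the bottom, the previous proposition combined with the deduction meta-theorem gives $\vdash_{\bI}\bot_{\alpha\beta}\rightarrow\varphi$ for every $\varphi$, so $[\bot_{\alpha\beta}]\leq[\varphi]$, and again interderivability of all $\bot_{\alpha\beta}$ shows the class is independent of the choice of $\alpha,\beta$. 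Finally, \textbf{Ax 9} $(\alpha\rightarrow\beta)\vee\alpha$, instantiated with $\beta:=\bot_{\alpha\alpha}$, yields $[\sneg\alpha]\vee[\alpha]=[\top]$, while the definition of $\bot_{\alpha\alpha}$ together with \textbf{Ip} forces $[\alpha]\wedge[\sneg\alpha]=[\bot]$. Hence every element has a Boolean complement, upgrading the classical implicative lattice to a Boolean algebra.

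The hard part here is really just bookkeeping: there is no conceptual novelty, since \inc\ enters only through the availability of a concrete bottom formula $\bot_{\alpha\beta}$, after which the argument is identical to the Lindenbaum--Tarski construction for \cplp. I would therefore present only the non-obvious steps (residuation, the role of \textbf{Ax 9}, and independence of $[\bot_{\alpha\beta}]$ and $[\top_{\alpha}]$ from their defining formulas) explicitly, and refer the reader to \cite[Proposition~6.1.7]{ParLog} for the routine lattice-theoretic verifications, which transfer verbatim from the \mbc\ case treated there.
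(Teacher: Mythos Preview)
Your proposal is correct and follows precisely the approach the paper intends: the paper gives no explicit argument at all, simply pointing to \cite[Proposition~6.1.7]{ParLog} and noting that the \mbc\ construction carries over, which is exactly the reference you invoke after sketching the routine Lindenbaum--Tarski verifications. Your write-up is thus a fleshed-out version of what the paper leaves implicit.
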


For a given $\Gamma$, we can expand $\mathcal{A}$ with a multioperator \inc\ in order to get a RNmatrix for  $\bI$ by defining
\[[\alpha]\inc[\beta]=\{[\varphi\inc\psi] \ : \  \varphi\in [\alpha] \ \mbox{ and } \ \psi\in[\beta]\}.\]
This produces a $\Sigma_{\bI}$-multialgebra $\mathcal{A}^{\bI}_{\Gamma}$, with universe $A^{\bI}_{\Gamma}$, called {\em the Lindenbaum-Tarski multialgebra of $\bI$ associated to $\Gamma$}. We can also prove that $[\alpha]\inc[\beta]=[\beta]\inc[\alpha]$. This follows from the fact that, from $\textbf{Comm}$, $\Gamma\vdash_{\bI}(\psi\inc\varphi)\rightarrow(\varphi\inc\psi)$ and $\Gamma\vdash_{\bI}(\varphi\inc\psi)\rightarrow(\psi\inc\varphi)$, meaning that $[\psi\inc\varphi]=[\varphi\inc\psi]$ for every $\psi, \varphi$. By taking $D$ and \F\ over $\mathcal{A}^{\bI}_{\Gamma}$ as in Definition~\ref{RN4bI}, the induced RNmatrix for $\bI$ will be called {\em the Lindenbaum-Tarski RNmatrix of $\bI$ associated to $\Gamma$}. Using this structure we get the following:

\begin{teo} [Completeness of \bI\ w.r.t. RNmatrices] \label{compleRNmatbI}
Given formulas $\Gamma\cup\{\varphi\}$ of $\bI$, if $\Gamma\Vdash^{\bI}_{\mathcal{F}}\varphi$ then $\Gamma\vdash_{\bI}\varphi$.
\end{teo}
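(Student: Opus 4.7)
The plan is to prove the contrapositive via a canonical-model argument, using the Lindenbaum--Tarski RNmatrix $\mathcal{A}^{\bI}_{\Gamma}$ of $\bI$ associated to $\Gamma$ that was built just before the statement. Concretely, I will assume $\Gamma \nvdash_{\bI} \varphi$ and produce a valuation $\nu \in \F$ over $\mathcal{A}^{\bI}_{\Gamma}$ with $\nu(\Gamma)\subseteq\{1\}$ but $\nu(\varphi)\neq 1$.

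First, I need to confirm that the triple $(\mathcal{A}^{\bI}_{\Gamma}, \{[\top_{\alpha}]\}, \F)$ does satisfy all clauses of Definition~\ref{RN4bI}. Clauses~1(a) and~1(b) have essentially been established already: the $\Sigma^{\cplp}$-reduct is a Boolean algebra with bottom $[\bot_{\alpha\beta}]$ and top $[\top_{\alpha}]$, and commutativity $[\alpha]\inc[\beta]=[\beta]\inc[\alpha]$ follows from the axiom schema \textbf{Comm}. The real content is clause~1(c): whenever $[\varphi\inc\psi] \in [\alpha]\inc[\beta]$ with $\varphi \in [\alpha]$ and $\psi \in [\beta]$, I must show $[\alpha]\wedge[\beta]\wedge[\varphi\inc\psi] = [\bot_{\alpha\beta}]$. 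Using \textbf{Ip} together with the deduction meta-theorem of Lemma~\ref{L1}(2), one has $\varphi\inc\psi, \varphi, \psi \vdash_{\bI} \bot_{\alpha\beta}$, and from $\Gamma \vdash_{\bI} \alpha \rightarrow \varphi$ and $\Gamma \vdash_{\bI} \beta \rightarrow \psi$ (which hold because $\varphi \equiv_{\Gamma}^{\bI} \alpha$ and $\psi \equiv_{\Gamma}^{\bI} \beta$) we obtain $\Gamma, \alpha, \beta, \varphi\inc\psi \vdash_{\bI} \bot_{\alpha\beta}$, which gives the desired equality of classes.

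Next I introduce the canonical valuation $\nu: \mathbf{F}(\Sigma_{\bI}, \mathcal{V}) \to A^{\bI}_{\Gamma}$ defined by $\nu(\alpha) = [\alpha]$. By the way the operations on classes were defined, $\nu$ is a homomorphism on the $\Sigma^{\cplp}$-fragment; moreover $\nu(\alpha\inc\beta) = [\alpha\inc\beta]$ is trivially an element of $[\alpha]\inc[\beta] = \nu(\alpha)\inc\nu(\beta)$ (choose $\varphi := \alpha$, $\psi := \beta$). The commutativity requirement for membership in $\F$ is immediate: $\nu(\alpha\inc\beta) = [\alpha\inc\beta] = [\beta\inc\alpha] = \nu(\beta\inc\alpha)$. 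Hence $\nu \in \F$.

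The crucial bridge is the equivalence $\nu(\gamma) = [\top_{\alpha}]$ iff $\Gamma \vdash_{\bI} \gamma$. The forward direction uses that $[\gamma] = [\top_{\alpha}]$ yields $\Gamma \vdash_{\bI} \top_{\alpha} \rightarrow \gamma$, and combining this with $\vdash_{\bI} \top_{\alpha}$ via \MP\ produces $\Gamma \vdash_{\bI} \gamma$; the converse is routine. Applying this to each $\gamma \in \Gamma$ gives $\nu(\gamma)=[\top_{\alpha}]$, and applying it contrapositively to $\varphi$ gives $\nu(\varphi) \neq [\top_{\alpha}]$, witnessing $\Gamma \nVdash^{\bI}_{\F} \varphi$. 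The main obstacle is really just the verification of clause~1(c) in Definition~\ref{RN4bI}, since everything else reduces to bookkeeping about the Boolean quotient and the definition of $\nu$; but as sketched, it follows directly from \textbf{Ip} together with replacement of provable equivalents available for the $\Sigma^{\cplp}$-connectives.
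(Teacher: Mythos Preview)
Your argument is correct and follows the same canonical-model strategy as the paper, with one notable difference: you work directly with the Lindenbaum--Tarski RNmatrix $\mathcal{A}^{\bI}_{\Gamma}$ associated to the original $\Gamma$, whereas the paper first extends $\Gamma$ to a $\varphi$-saturated set $\Delta$ and then forms $\mathcal{A}^{\bI}_{\Delta}$. Your route is slightly more economical, since you avoid invoking the Lindenbaum lemma on $\varphi$-saturated sets and only need the equivalence $[\gamma]=1$ iff $\Gamma\vdash_{\bI}\gamma$, which suffices to separate $\Gamma$ from $\varphi$ because the designated set is $\{1\}$. The paper's detour through $\Delta$ buys something concrete, though: since \bI\ defines a classical negation, a $\varphi$-saturated $\Delta$ is maximal consistent, so the quotient $A^{\bI}_{\Delta}$ is the two-element Boolean algebra and the canonical model coincides with the RNmatrix $\M^{\textbf{2}}_{\bI}$ used later for the decision procedure (this also justifies the paper's wording $\nu(\varphi)=0$ rather than merely $\nu(\varphi)\neq 1$). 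Your verification of clause~1(c) via \textbf{Ip} and replacement of $\equiv_{\Gamma}^{\bI}$-equivalents is fine and fills in a step the paper leaves implicit.
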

\begin{proof}
Suppose that $\Gamma\nvdash_{\bI}\varphi$, and consider a $\varphi$-saturated set $\Delta$ in \bI\ such that $\Gamma \subseteq \Delta$. Let $\mathcal{A}^{\bI}_{\Delta}$ be the Lindenbaum-Tarski multialgebra of $\bI$ associated to $\Delta$, and consider the RNmatrix generated from this, as defined above. Since $\Delta$ is $\varphi$-saturated, the map $\nu:\mathbf{F}(\Sigma_{\bI}, \mathcal{V})\rightarrow A^{\bI}_{\Delta}$ such that $\nu(\alpha)=[\alpha]$ is a valuation for $\bI$ over $\mathcal{A}^{\bI}_{\Delta}$ which clearly is in  $\mathcal{F}$. Furthermore, $\nu(\alpha)=1$ if and only if $\Delta\vdash_{\bI}\alpha$. From this, $\nu(\gamma)=1$ for every $\gamma \in \Gamma$. Given that $\Delta\nvdash_{\bI}\varphi$, we obtain that $\nu(\varphi)=0$. This shows that  $\Gamma\nVdash^{\bI}_{\mathcal{F}}\varphi$.\qedhere
\end{proof}


\subsection{A Decision Method for \bI}

We will denote by $\textbf{2}$ the two-valued Boolean algebra with domain $\{0,1\}$. We define a RNmatrix  for \bI\ over $\textbf{2}$ as follows: let $\textbf{2}_{\bI}$ be the expansion of the $\Sigma^{\cplp}$-reduct of $\textbf{2}$ with the multioperator \inc\ given by $1\inc1=\{0\}$ and $x\inc y=\{0,1\}$ otherwise. Observe that, for every $x,y,z$, if  $z\in x\inc y$ then $x\wedge y\wedge z=0$. Moreover, $x\inc y=y\inc x$.

\begin{figure}[H]
\centering
\captionsetup{justification=centering}
\begin{tabular}{|l|c|r|}
\hline
$\inc$ & $0$ & $1$\\ \hline
$0$ & $\{0,1\}$ & $\{0,1\}$\\ \hline
$1$ & $\{0,1\}$ & $\{0\}$\\\hline
\end{tabular}
\caption*{Table for $\inc$ in $\textbf{2}_{\bI}$}
\end{figure}
Let $\M^\textbf{2}_{\bI}=(\textbf{2}_{\bI}, \{1\}, \mathcal{F}_{\textbf{2}_{\bI}})$  be the RNmatrix for \bI\  defined from $\textbf{2}_{\bI}$ according to Definition~\ref{RN4bI}.

\begin{prop}\label{bivaluations are homomorphisms}
A map $\nu:\mathbf{F}(\Sigma_{\bI}, \mathcal{V})\rightarrow\{0,1\}$ is a bivaluation for $\bI$ if, and only if, it is a valuation for the Nmatrix $(\textbf{2}_{\bI}, \{1\})$ which lies in $\mathcal{F}_{\textbf{2}_{\bI}}$.
\end{prop}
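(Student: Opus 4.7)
The plan is to verify that each defining clause of a bivaluation translates, one-to-one, into a valuation constraint on the Nmatrix $(\textbf{2}_{\bI}, \{1\})$ (plus, for clause 5, membership in $\mathcal{F}_{\textbf{2}_{\bI}}$). The proof is essentially a bookkeeping exercise, since the design of $\textbf{2}_{\bI}$ has been chosen precisely to mirror the bivaluation conditions.

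First I would handle the classical connectives $\vee, \wedge, \to$. In $\textbf{2}_{\bI}$ these are inherited from the two-valued Boolean algebra $\textbf{2}$, so they are deterministic: the multioperators are singletons $\{x \#_{\textbf{2}} y\}$. Hence the Nmatrix valuation clause $\nu(\alpha\#\beta) \in \#_{\textbf{2}_{\bI}}(\nu(\alpha),\nu(\beta))$ reduces to the functional equality $\nu(\alpha\#\beta) = \#_{\textbf{2}}(\nu(\alpha),\nu(\beta))$. A straightforward case analysis on the four possible value assignments shows that this equality is equivalent to clauses 1, 2 and 3 of the bivaluation definition, respectively.

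Next I would treat the incompatibility connective. From the table of $\inc_{\textbf{2}_{\bI}}$, the valuation condition $\nu(\alpha\inc\beta)\in\nu(\alpha)\inc_{\textbf{2}_{\bI}}\nu(\beta)$ is vacuous whenever $(\nu(\alpha),\nu(\beta)) \neq (1,1)$ (the image being the whole of $\{0,1\}$), and it forces $\nu(\alpha\inc\beta) = 0$ when $\nu(\alpha) = \nu(\beta) = 1$. The contrapositive of this last implication is precisely clause 4: if $\nu(\alpha\inc\beta) = 1$, then $\nu(\alpha) = 0$ or $\nu(\beta) = 0$. Conversely, assuming clause 4 and doing the same case analysis shows that $\nu(\alpha\inc\beta)$ always lies in $\nu(\alpha)\inc_{\textbf{2}_{\bI}}\nu(\beta)$.

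Finally, clause 5 is literally the restriction defining the set $\mathcal{F}_{\textbf{2}_{\bI}}$ in Definition~\ref{RN4bI}(3). Putting the three equivalences together, $\nu$ satisfies clauses 1--5 iff it is a valuation over $(\textbf{2}_{\bI}, \{1\})$ belonging to $\mathcal{F}_{\textbf{2}_{\bI}}$. There is no real obstacle here; the only point to be careful about is making sure, for the $\inc$ clause, that both directions of the biconditional are explicitly checked, since the multioperator is non-deterministic exactly where the bivaluation clause imposes no constraint.
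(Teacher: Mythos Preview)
Your proposal is correct and is exactly the natural verification; the paper in fact omits the proof of this proposition entirely, treating it as immediate from the definitions. Your write-up simply makes explicit the routine case analysis the paper leaves to the reader.
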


\begin{cor}
Given formulas $\Gamma\cup\{\varphi\}$ of $\bI$, $\Gamma\vDash_{\bI}\varphi$ iff $\Gamma\vDash_{\M^\textbf{2}_{\bI}}\varphi$. Hence, $\Gamma\vdash_{\bI}\varphi$ iff $\Gamma\vDash_{\M^\textbf{2}_{\bI}}\varphi$.
\end{cor}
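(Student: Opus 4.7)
The plan is to reduce the corollary to two ingredients already established in the excerpt: Proposition~\ref{bivaluations are homomorphisms} (equating bivaluations with the valuations in $\mathcal{F}_{\textbf{2}_{\bI}}$) and the Soundness and Completeness Theorem for \bI\ with respect to bivaluations. There are no nontrivial calculations; the whole thing is essentially a matter of unfolding definitions, so the obstacle is mainly bookkeeping.

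First I would address the first biconditional, $\Gamma\vDash_{\bI}\varphi$ iff $\Gamma\vDash_{\M^\textbf{2}_{\bI}}\varphi$. By definition, $\Gamma\vDash_{\bI}\varphi$ means that for every bivaluation $\nu$ for \bI\ with $\nu(\gamma)=1$ for all $\gamma\in\Gamma$ we have $\nu(\varphi)=1$; and $\Gamma\vDash_{\M^\textbf{2}_{\bI}}\varphi$ means exactly the same for every $\nu\in\mathcal{F}_{\textbf{2}_{\bI}}$, since the designated set is $\{1\}$. Proposition~\ref{bivaluations are homomorphisms} says that the set of bivaluations for \bI\ coincides, as a set of maps $\mathbf{F}(\Sigma_{\bI},\mathcal{V})\to\{0,1\}$, with $\mathcal{F}_{\textbf{2}_{\bI}}$, so the two universal statements quantify over the same collection of maps and are therefore equivalent.

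For the second biconditional, I would simply chain the first with the already proved Soundness and Completeness Theorem of \bI\ w.r.t.\ bivaluations: $\Gamma\vdash_{\bI}\varphi$ iff $\Gamma\vDash_{\bI}\varphi$ iff $\Gamma\vDash_{\M^\textbf{2}_{\bI}}\varphi$. This finishes the proof.

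The only subtle points to flag are that $\M^\textbf{2}_{\bI}$ must indeed satisfy Definition~\ref{RN4bI} (the $\Sigma^{\cplp}$-reduct of $\textbf{2}_{\bI}$ is a Boolean algebra, $\inc$ is commutative, and $c\in a\inc b$ forces $a\wedge b\wedge c=0$ — all verified in the paragraph just above the table), and that conditions~(4) and~(5) in the definition of bivaluation match exactly the multi-valued table for $\inc$ together with membership in $\mathcal{F}_{\textbf{2}_{\bI}}$. Both are already covered by the preceding proposition, so nothing new is required.
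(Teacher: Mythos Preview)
Your proof is correct and is precisely the argument the paper intends: the corollary is stated without proof immediately after Proposition~\ref{bivaluations are homomorphisms}, and your unfolding---identifying bivaluations with $\mathcal{F}_{\textbf{2}_{\bI}}$ via that proposition, then chaining with soundness and completeness---is exactly the intended justification.
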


The RNmatrix $\M^\textbf{2}_{\bI}$ induces a straightforward decision method for \bI. Here, we give a brief description of the technique, the proofs this works being easily adapted from the previously mentioned~\cite{CostaRNmatrix}.

In order to test whether a formula $\varphi$ of $\bI$ is a tautology, one starts by writing its row-branching table according to the Nmatrix $(\textbf{2}_{\bI}, \{1\})$. This involves listing the subformulas of $\varphi$ in ascending degree of complexity\footnote{The {\em complexity} of a formula in $\bI$ is defined recursively as usual: variables have complexity $0$, and if $\alpha$ and $\beta$ have complexity $m$ and $n$, respectively, then $\alpha\#\beta$ has complexity $m+ n+1$, for $\#\in\{\vee, \wedge, \rightarrow, \inc\}$.} $\varphi_{1}$, \ldots  , $\varphi_{n}=\varphi$, and listing all the possible values for those subformulas that are variables (the possible values being $0$ or $1$, independently of the values of other subformulas). Subsequently, if $\varphi_{l}=\varphi_{i}\#\varphi_{j}$, and $\varphi_{i}$ assumes the value $a$ and $\varphi_{j}$ assumes the value $b$ on a given row, then $\varphi_{l}$ assumes the value $a\#b$ on the same row if $\#\in\{\vee, \wedge, \rightarrow\}$. If $\#$ is $\inc$ and $a=b=1$, $\varphi_{l}$ assumes the value $0$, and if $\#$ is $\inc$ but either $a$ or $b$ is $0$, the row in question branches into two, one assigning the value $0$ to $\varphi_{l}$, the other the value $1$.

Finally, the rows corresponding to undesired homomorphisms must be erased. If a row contains both $\varphi_{i}\inc\varphi_{j}$ and $\varphi_{j}\inc\varphi_{i}$ and they are given different values, the row must be erased. This can of course be done algorithmically, and then $\varphi$ is a tautology iff its corresponding column on the  table reduced as described above contains only $1$. To test a deduction $\Gamma\vdash_{\bI}\varphi$, with $\Gamma=\{\gamma_{1}, \ldots  , \gamma_{m}\}$ a finite set, it is enough to test if $\bigwedge_{i=1}^{m}\gamma_{i}\rightarrow\varphi$ is a tautology. Thus, the row-branching, row-eliminating tables for $\bI$ are a decision method for this logic. Notice too that it is clearly more efficient to erase the undesired rows as they appear: if $\varphi_{i}$ takes the value $a$ and $\varphi_{j}$ the value $b$ with $a=0$ or $b=0$, and $\varphi_{k}=\varphi_{j}\inc\varphi_{i}$ has already appeared, then $\varphi_{l}=\varphi_{i}\inc\varphi_{j}$ simply takes the same value as $\varphi_{k}$.

\subsection{A Tableaux Decision Method for \bI}

In our previous article \cite{CostaRNmatrix} we constructed labelled tableau calculi for the logics $C_{n}$ of da Costa based on their corresponding RNmatrices. This can again be done here, but we will not delve into details. The labelled tableau calculus for $\bI$, which we will denote by $\mathbb{T}_{\bI}$, has the following rules, being the ones for the classical connectives the expected ones.\footnote{Notice that by adding a classical negation $\sim$ to $\bI$ we could avoid the use of labels.}

$$
\begin{array}{cp{1.5cm}cp{1.5cm}c}
\displaystyle \frac{\textsf{0}(\varphi\vee\psi)}{\begin{array}{c}\textsf{0}(\varphi) \\ \textsf{0}(\psi)\end{array}} & & \displaystyle \frac{\textsf{0}(\varphi\wedge\psi)}{\textsf{0}(\varphi)\mid\textsf{0}(\psi)} & & \displaystyle \frac{\textsf{0}(\varphi\rightarrow\psi)}{\begin{array}{c}\textsf{1}(\varphi) \\ \textsf{0}(\psi)\end{array}}  \\[2mm]
&&&&\\[2mm]
 \displaystyle \frac{\textsf{1}(\varphi\vee\psi)}{\textsf{1}(\varphi)\mid\textsf{1}(\psi)} & & \displaystyle \frac{\textsf{1}(\varphi\wedge\psi)}{\begin{array}{c}\textsf{1}(\varphi) \\ \textsf{1}(\psi)\end{array}} & & \displaystyle \frac{\textsf{1}(\varphi\rightarrow\psi)}{\textsf{0}(\varphi)\mid\textsf{1}(\psi)}\\[2mm]
&&&&\\[2mm]
\end{array}
$$

\[\frac{\textsf{1}(\varphi\inc\psi)}{\textsf{0}(\varphi)\mid\textsf{0}(\psi)}\]

A branch of a tableau in $\mathbb{T}_{\bI}$ is closed if:
\begin{enumerate}
\item it contains labelled formulas $\textsf{L}(\varphi)$ and $\textsf{L}'(\varphi)$ with $\textsf{L}\neq\textsf{L}'$;
\item it contains labelled formulas $\textsf{L}(\varphi\inc\psi)$ and $\textsf{L}'(\psi\inc\varphi)$ with $\textsf{L}\neq\textsf{L}'$.
\end{enumerate}
A branch $\theta$ is complete if, for every labelled formula $\textsf{L}(\gamma)$ in $\theta$ not of the form $\textsf{0}(\varphi\inc\psi)$ (for which there is no tableau rule) and with $\gamma$ not a variable, $\theta$ also contains all of the labelled formulas of one of the branches resulting from the application of a tableau rule to $\textsf{L}(\gamma)$ (there is only one applicable rule). A complete branch is open if it is not closed. A tableau in $\mathbb{T}_{\bI}$ is: closed if all of its branches are closed; complete if all of its branches are either closed or complete; and open if its complete but not closed.

Given that all the tableau rules are analytic, in the sense that the formulas in the conclusion are of complexity strictly smaller than that of the premiss (indeed, they are strict subformulas of it), the resulting tableaux of $\mathbb{T}_{\bI}$ can always be completed. A formula $\varphi$ of $\bI$ is said to be provable by tableaux in $\mathbb{T}_{\bI}$, what we write as $\vdash_{\mathbb{T}_{\bI}}\varphi$, if there exists a closed tableau in $\mathbb{T}_{\bI}$ starting from $\textsf{0}(\varphi)$. If $\Gamma$ is a finite set of formulas $\{\gamma_{1}, \ldots  , \gamma_{m}\}$, we say that $\varphi$ is provable by tableaux in $\mathbb{T}_{\bI}$ from $\Gamma$, written as $\Gamma\vdash_{\mathbb{T}_{\bI}}\varphi$, if there is a closed tableau in $\mathbb{T}_{\bI}$ starting from $\bigwedge_{i=1}^{m}\gamma_{i}\rightarrow\varphi$. The following theorem finally shows how the tableau system become a decision method for $\bI$.

\begin{teo}
For a finite set of formulas $\Gamma\cup\{\varphi\}$, $\Gamma\vdash_{\bI}\varphi$ iff $\Gamma\vdash_{\mathbb{T}_{\bI}}\varphi$.
\end{teo}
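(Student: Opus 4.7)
The plan is to reduce the statement to the tautology case, then bridge tableau provability with the already established semantics via $\M^\textbf{2}_{\bI}$. First, by the deduction meta-theorem (Lemma~\ref{L1}(2)) and the very definition of $\Gamma \vdash_{\mathbb{T}_{\bI}} \varphi$ (as a closed tableau starting from $\textsf{0}(\bigwedge_{i=1}^{m}\gamma_{i}\rightarrow\varphi)$), both sides of the biconditional are equivalent to their single-formula versions on $\psi := \bigwedge_{i=1}^{m}\gamma_{i}\rightarrow\varphi$. Combined with soundness/completeness of \bI\ w.r.t. bivaluations, together with Proposition~\ref{bivaluations are homomorphisms}, it suffices to prove that $\vDash_{\bI} \psi$ iff $\vdash_{\mathbb{T}_{\bI}} \psi$.

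For the \emph{soundness} direction ($\vdash_{\mathbb{T}_{\bI}} \psi \Rightarrow \vDash_{\bI} \psi$) I would argue contrapositively. Call a branch $\theta$ \emph{$\nu$-faithful} (for a bivaluation $\nu$) when $\nu(\gamma)=\textsf{L}$ for every labelled formula $\textsf{L}(\gamma)$ on $\theta$. A rule-by-rule inspection shows that, whenever the premise of a tableau rule is $\nu$-faithful, at least one of the conclusion branches is $\nu$-faithful as well; this uses clauses 1--3 of the bivaluation definition for the classical connectives and clause 4 for the rule on $\textsf{1}(\varphi \inc \psi)$. Moreover, no $\nu$-faithful branch can satisfy either closure condition: condition 1 fails since $\nu$ is a function, while condition 2 is precisely ruled out by clause 5 of the bivaluation, $\nu(\alpha\inc\beta)=\nu(\beta\inc\alpha)$. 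Hence if $\nu(\psi)=0$, then starting from $\textsf{0}(\psi)$ we can iteratively select $\nu$-faithful expansions and obtain a complete $\nu$-faithful (hence open) branch, contradicting closure.

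For the \emph{completeness} direction I take an open complete branch $\theta$ of a tableau starting at $\textsf{0}(\psi)$ and manufacture a bivaluation $\nu$ with $\nu(\psi)=0$. Define $\nu(p)=1$ iff $\textsf{1}(p)\in\theta$ on propositional variables (with an arbitrary choice when neither label appears), extend to $\vee,\wedge,\to$ via the bivaluation clauses, and crucially set
\[
\nu(\alpha \inc \beta)=1 \ \iff\ \textsf{1}(\alpha\inc\beta)\in\theta \ \text{ or } \ \textsf{1}(\beta\inc\alpha)\in\theta.
\]
This definition makes clause 5 immediate by symmetry, and clause 4 follows by induction on complexity, since whenever $\textsf{1}(\alpha\inc\beta)\in\theta$ (or the symmetric variant) the completeness of $\theta$ forces one of $\textsf{0}(\alpha),\textsf{0}(\beta)$ to appear on it. A straightforward induction on formula complexity then shows that $\nu$ is faithful to every label on $\theta$: the classical cases are routine; for labels of the form $\textsf{1}(\alpha\inc\beta)$ the definition gives $\nu(\alpha\inc\beta)=1$ directly, and for $\textsf{0}(\alpha\inc\beta)$ the two closure conditions guarantee that neither $\textsf{1}(\alpha\inc\beta)$ nor $\textsf{1}(\beta\inc\alpha)$ lies on $\theta$, so $\nu(\alpha\inc\beta)=0$. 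Applying this to $\textsf{0}(\psi)\in\theta$ yields $\nu(\psi)=0$, hence $\nvDash_{\bI}\psi$.

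The main obstacle is precisely the last point: the absence of a tableau rule for $\textsf{0}(\alpha\inc\beta)$ means the semantic value of $\inc$-formulas is not pinned down by subformula labels, so one must simultaneously verify clauses 4 and 5 and faithfulness on both $\textsf{0}$- and $\textsf{1}$-labels of $\inc$-formulas. The symmetric definition above, read through closure condition 2, is what ties these four checks together.
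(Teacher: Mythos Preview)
The paper does not actually supply a proof of this theorem: it simply states that the tableau construction and its adequacy ``can again be done here, but we will not delve into details,'' referring back to~\cite{CostaRNmatrix}. Your proposal therefore fills in what the paper leaves implicit, and it does so along the expected lines: reduce to a single formula via the deduction meta-theorem, then establish the equivalence $\vDash_{\bI}\psi \Leftrightarrow \vdash_{\mathbb{T}_{\bI}}\psi$ by the standard $\nu$-faithful branch argument for soundness and by reading off a bivaluation from an open complete branch for completeness. The symmetric definition of $\nu(\alpha\inc\beta)$ is exactly the right device to secure clause~5 and to exploit closure condition~2 for the $\textsf{0}(\alpha\inc\beta)$ case; the verification of clause~4 via faithfulness on the lower-complexity $\textsf{0}(\alpha)$ or $\textsf{0}(\beta)$ is also correct.

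One point of phrasing deserves tightening in the soundness direction. You write that from $\nu(\psi)=0$ ``we can iteratively select $\nu$-faithful expansions and obtain a complete $\nu$-faithful (hence open) branch, contradicting closure.'' Read literally, this only exhibits \emph{one} tableau with an open branch, whereas $\nvdash_{\mathbb{T}_{\bI}}\psi$ requires that \emph{every} tableau starting from $\textsf{0}(\psi)$ fail to close. The argument you have already given actually yields the stronger statement: in \emph{any} tableau, starting at the root and, at each rule application, passing to a $\nu$-faithful child (which exists by your rule-by-rule check), one traces a $\nu$-faithful branch that cannot meet either closure condition. Stating it this way removes the ambiguity. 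With that adjustment, the proof is correct and matches the standard argument that the paper's reference presumably contains.
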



\subsection{Collapsing axioms for \bI}

A question that naturally arises when dealing with incompatibility is the relationship between $\alpha$ and $\beta$ being incompatible, and $\alpha$ and $\beta$, together, trivializing the logic. That is, if $\alpha$ and $\beta$ can trivialize the given logic, does that mean $\alpha$ and $\beta$ are incompatible? Consider the axiom schema
\[\tag{\textbf{Ex}}((\alpha\wedge\beta)\rightarrow\bot_{\alpha\beta})\rightarrow(\alpha\inc\beta)\]
The logic obtained from $\bI$ by addition of $\textbf{Ex}$ is not really a new logic: in this system $\alpha\inc\beta$ is equivalent to $(\alpha\wedge\beta)\rightarrow\bot_{\alpha\beta}$ and therefore we reobtain $\textbf{CPL}$ in which $\alpha \inc \beta$ is  $\alpha{\Uparrow}\beta$.

The axiom $\textbf{ciw}$, given by $\circ\alpha\vee(\alpha\wedge\neg \alpha)$, is a very important one when dealing with paraconsistency in \lfis. When dealing with incompatibility, we can consider an analogous axiom, namely
\[\tag{$\textbf{ciw}^\inc$}(\alpha\inc\beta)\vee(\alpha\wedge\beta).\]
The logic obtained from $\bI$ by adding $\textbf{ciw}^\inc$ is, as it happens with $\bI$ and $\textbf{Ex}$,  equivalent to $\textbf{CPL}$, with $\alpha\inc\beta$ once again corresponding to $\alpha{\Uparrow}\beta$. Both $\textbf{Ex}$ and $\textbf{ciw}^{\inc}$ are then  collapsing axioms: although very intuitive, their addition to $\bI$ takes us back to $\textbf{CPL}$, showing how very sensitive this former system can be. In the next section, some possible extensions of \bI\ by means of a (non-classical) negation will be investigated.


\section{Adding a Negation to Logics of Incompatibility}\label{third}

When studying paraconsistent logics, our main focus is in the properties of negations, as is the case when working with paracomplete logics (in which the given negation does not satisfy the excluded middle). Given such a prominent role negation plays in non-classical logics, is quite natural to shift our focus in the logics of incompatibility from $\inc$ to a non-classical negation, or better yet, to the possible interplay between $\inc$ and such a negation.

Our first step is adding this negation: we have seen that in any logic extending $\bI$, is always possible to define a classical negation. We, therefore, need to consider a new negation, weaker than the negation intrinsic to $\bI$. So we need a symbol for it: we define the signature $\Sigma_{\nbI}$ as the signature obtained from $\Sigma_{\bI}$ by addition of a unary symbol $\neg$.


\subsection{The Logic $\nbI$}

We start by adding to $\bI$ a paraconsistent negation, producing the logic $\nbI$. To the Hilbert calculus for $\bI$ we add only the axiom schema
\begin{enumerate}[wide=0pt, leftmargin=*]
\item[\textbf{Ax  11}] $\alpha\vee\neg \alpha$.
\end{enumerate}

A bivaluation for $\nbI$ is a map $\nu:\mathbf{F}(\Sigma_{\nbI},\mathcal{V})\rightarrow\{0,1\}$ satisfying the conditions 1-5 required of a bivaluation for $\bI$, plus the following:
\begin{itemize}
\item[6.] if $\nu(\neg \alpha)=0$, then $\nu(\alpha)=1$.
\end{itemize}
We define the semantical consequence relation $\vDash_{\nbI}$ in the same way we had defined $\vDash_{\bI}$.
Clearly, \nbI-bivaluations validate \textbf{Ax  11}. From this, we obtain easily the following: 

\begin{teo} [Soundness of \nbI\ w.r.t. bivaluations]
Given formulas $\Gamma\cup\{\varphi\}$ of $\nbI$, if $\Gamma\vdash_{\nbI}\varphi$ then $\Gamma\vDash_{\nbI}\varphi$.
\end{teo}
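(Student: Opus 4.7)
The plan is to mimic the soundness proof already carried out for \bI, extending it just enough to cover the single new axiom schema \textbf{Ax 11} and the enlarged signature. Concretely, I would proceed by induction on the length $n$ of a derivation $\alpha_{1},\ldots,\alpha_{n}=\varphi$ of $\varphi$ from $\Gamma$ in \nbI. Fix an arbitrary \nbI-bivaluation $\nu$ with $\nu(\gamma)=1$ for every $\gamma\in\Gamma$, and show inductively that $\nu(\alpha_{i})=1$ for every $i\leq n$.

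The inductive step splits into the usual three cases. If $\alpha_{i}\in\Gamma$, then $\nu(\alpha_{i})=1$ by hypothesis on $\nu$. If $\alpha_{i}$ is obtained by \MP\ from earlier $\alpha_{j}$ and $\alpha_{k}=\alpha_{j}\rightarrow\alpha_{i}$, the induction hypothesis gives $\nu(\alpha_{j})=\nu(\alpha_{j}\rightarrow\alpha_{i})=1$, and clause~3 of the definition of a bivaluation (inherited unchanged from \bI) forces $\nu(\alpha_{i})=1$. The interesting case is when $\alpha_{i}$ is an instance of some axiom schema of \nbI. For axioms \textbf{Ax 1}--\textbf{Ax 9}, \textbf{Ip}\ and \textbf{Comm}, their validity under any \nbI-bivaluation is immediate, since an \nbI-bivaluation is in particular a \bI-bivaluation, and these axioms were already shown to be validated by \bI-bivaluations in the soundness half of the bivaluation completeness theorem for \bI.

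Thus the only genuinely new verification is that \textbf{Ax 11}, that is $\alpha\vee\neg\alpha$, is valid under every \nbI-bivaluation. This is where clause~6 is used: if $\nu(\neg\alpha)=0$ then by clause~6 we have $\nu(\alpha)=1$, so in either case $\nu(\alpha)=1$ or $\nu(\neg\alpha)=1$, and by clause~1 this gives $\nu(\alpha\vee\neg\alpha)=1$.

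There is no real obstacle here: the argument is a routine extension of the soundness direction for \bI, and the one-line verification of \textbf{Ax 11} via clause~6 is the only new ingredient. The proof can therefore be presented very compactly, essentially by pointing to the \bI\ argument and treating \textbf{Ax 11} separately as above.
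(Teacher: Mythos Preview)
Your proposal is correct and follows exactly the approach the paper intends: the paper simply remarks that \nbI-bivaluations validate \textbf{Ax 11} and that soundness then follows easily, which is precisely the induction-on-derivations argument you spell out, with the only new ingredient being the verification of $\alpha\vee\neg\alpha$ via clause~6.
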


From the latter result it follows that \nbI\ is paraconsistent. Indeed,  if $p$ and $q$ are two  different variables, consider a \nbI-valuation $\nu$ such that $\nu(p)=\nu(\neg p)=1$ and $\nu(q)=0$. This shows that  $p,\neg p \nvDash_{\nbI} q$ and so  $p,\neg p \nvdash_{\nbI} q$, by soundness.

In order to prove completeness, it is immediate to see that, given a set of formulas $\Gamma$ which is  $\varphi$-saturated in $\nbI$, the map $\nu:\mathbf{F}(\Sigma_{\nbI}, \mathcal{V})\rightarrow\{0,1\}$, such that $\nu(\gamma)=1$ if and only if $\gamma\in\Gamma$, is a bivaluation for $\nbI$. Then, as in the case of \bI, it can be proved by contraposition the following:

\begin{teo}  [Completeness of \nbI\ w.r.t. bivaluations]
Given formulas $\Gamma\cup\{\varphi\}$ of $\nbI$, if $\Gamma\vDash_{\nbI}\varphi$ then $\Gamma\vdash_{\nbI}\varphi$.
\end{teo}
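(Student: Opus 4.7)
The plan is to prove the contrapositive, mimicking closely the completeness argument already given for \bI. Suppose $\Gamma \nvdash_{\nbI} \varphi$. Since \nbI\ is Tarskian and finitary (being presented by a Hilbert calculus with \MP\ as its sole inference rule), the standard Lindenbaum–Asser result quoted earlier in the paper (\cite[Theorem 22.2]{wojcicki1984lectures}) guarantees the existence of a $\varphi$-saturated set $\Delta$ in \nbI\ with $\Gamma \subseteq \Delta$. As in \bI, define $\nu:\mathbf{F}(\Sigma_{\nbI},\mathcal{V})\to\{0,1\}$ by $\nu(\gamma)=1$ iff $\gamma \in \Delta$.

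The bulk of the proof then consists in verifying that $\nu$ is a bivaluation for \nbI. The hint preceding the statement already asserts this, so here I would simply check each of the six defining clauses. Clauses 1--3 (for $\vee$, $\wedge$, $\to$) are established exactly as for \bI: they rely only on the fact that $\Delta$ is a closed theory containing \cplp\ and is prime (the latter following from proof-by-cases, item~3 of Lemma~\ref{L1}, which transfers verbatim to \nbI). Clause 4 follows from axiom \textbf{Ip}: if $\alpha \inc \beta \in \Delta$ and $\alpha, \beta \in \Delta$, then $\Delta \vdash_{\nbI} \varphi$ by \textbf{Ip} and \MP, contradicting $\varphi$-saturation. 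Clause 5 follows from axiom \textbf{Comm} together with the fact that $\Delta$ is deductively closed.

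The only genuinely new clause is clause 6, concerning $\neg$. Suppose $\nu(\neg\alpha)=0$, i.e.\ $\neg\alpha \notin \Delta$. By \textbf{Ax 11}, $\vdash_{\nbI} \alpha \vee \neg\alpha$, so $\alpha \vee \neg\alpha \in \Delta$. Since $\Delta$ is prime (as a $\varphi$-saturated set in a logic with proof-by-cases), either $\alpha \in \Delta$ or $\neg\alpha \in \Delta$; the latter is excluded by assumption, so $\alpha \in \Delta$ and $\nu(\alpha)=1$. Hence $\nu$ is a legitimate \nbI-bivaluation.

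To conclude: since $\Gamma \subseteq \Delta$, we have $\nu(\gamma)=1$ for all $\gamma \in \Gamma$; and since $\varphi \notin \Delta$ (because $\Delta \nvdash_{\nbI} \varphi$), we have $\nu(\varphi)=0$. Therefore $\Gamma \nvDash_{\nbI} \varphi$, completing the contrapositive. I do not expect any real obstacle here: everything except clause 6 is a literal copy of the \bI\ case, and clause 6 is a one-line consequence of \textbf{Ax 11} plus primeness of saturated sets.
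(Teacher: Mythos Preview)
Your proposal is correct and follows essentially the same approach as the paper: the paper's proof is a one-line reference back to the \bI\ case (contraposition, pass to a $\varphi$-saturated extension $\Delta$, take its characteristic function, and verify it is an \nbI-bivaluation), and you have simply spelled out the details, including the new clause~6 via \textbf{Ax~11} and primeness. Nothing more is needed.
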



\subsection{RNmatrices for \nbI}

The RNmatrices for \bI\ can be easily extended to RNmatrices for \nbI.

\begin{mydef} \label{defRNmat-nbI}
A  RNmatrix for $\nbI$ is a triple $(\A,D,\F)$ where
\begin{enumerate}
\item $\mathcal{A}=(A, \{\sigma_{\mathcal{A}}\}_{\sigma\in\Sigma_{\nbI}})$ is a $\Sigma_{\nbI}$-multialgebra  such that the reduct $(A,\{\sigma_{\mathcal{A}}\}_{\sigma\in\Sigma_{\bI}})$ satisfies the conditions of item~1 of Definition~\ref{RN4bI}.
\item $\neg_\A a \subseteq \{b\in A \ : \ a\vee_\A b=1_\A\}$, for all $a\in A$.
\item $D=\{1_\A\}$.
\item The set \F\ is formed by the valuations $\nu$ over \A\ (i.e., $\nu:\mathbf{F}(\Sigma_{\nbI}, \mathcal{V}) \to  \A$  is a $\Sigma_{\nbI}$-homomorphism) such that $\nu(\alpha\inc\beta)=\nu(\beta\inc\alpha)$,
for any two formulas $\alpha$ and $\beta$ in $\mathbf{F}(\Sigma_{\nbI}, \mathcal{V})$.
\end{enumerate}
\end{mydef}

Clearly, any RNmatrix for \nbI\ is structural (the proof is similar to the one for \bI).
The consequence relation w.r.t. RNmatrices for \nbI\ will be denoted by $\Vdash_{\mathcal{F}}^{\nbI}$.

Since RNmatrices for $\nbI$ satisfy all the properties that the ones for $\bI$ have, they validate  the axiom schemata and rules of inference of $\bI$. Furthermore, for any RNmatrix $\mathcal{A}$ for $\nbI$ and valuation $\nu$, $\nu(\alpha\vee\neg \alpha)=\nu(\alpha)\vee_\A\nu(\neg \alpha)$, and since $\nu(\neg \alpha)\in\neg_\A\nu(\alpha)$, $\nu(\alpha)\vee_\A\nu(\neg \alpha)=1_\A$. From this we get the following:

\begin{teo} [Soundness of \nbI\ w.r.t. RNmatrices]
Given formulas $\Gamma\cup\{\varphi\}$ of $\nbI$, if $\Gamma\vdash_{\nbI}\varphi$ then $\Gamma\Vdash_{\mathcal{F}}^{\nbI}\varphi$.
\end{teo}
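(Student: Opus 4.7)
The plan is to follow the same induction-on-proof-length strategy used for the soundness theorem of \bI. Two things must be verified: first, that every instance of an axiom schema of \nbI\ is valid in every RNmatrix for \nbI\ (i.e., evaluates to $1_\A$ under every valuation $\nu \in \F$); second, that Modus Ponens preserves validity. Once these are in place, a routine induction on the length $n$ of a derivation $\alpha_1, \ldots, \alpha_n = \varphi$ of $\varphi$ from $\Gamma$ shows that for any $\nu \in \F$ with $\nu(\gamma)=1_\A$ for every $\gamma \in \Gamma$, one has $\nu(\alpha_i)=1_\A$ for each $i \leq n$, and therefore $\Gamma \Vdash^{\nbI}_{\F} \varphi$.

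For the axiom schemata \textbf{Ax 1}--\textbf{Ax 9}, \textbf{Ip}, and \textbf{Comm}, which are inherited from \bI, I would invoke the previously established soundness of \bI\ w.r.t. RNmatrices. More precisely, given a RNmatrix $(\A,D,\F)$ for \nbI\ as in Definition~\ref{defRNmat-nbI}, its $\Sigma_{\bI}$-reduct together with $D$ and the restriction of $\F$ to $\Sigma_{\bI}$-valuations forms a RNmatrix for \bI\ in the sense of Definition~\ref{RN4bI}; thus every \bI-axiom schema evaluates to $1_\A$ under every $\nu \in \F$. For the new schema \textbf{Ax 11}, i.e.\ $\alpha \vee \neg \alpha$, the argument already indicated in the paragraph preceding the theorem applies directly: since $\nu$ is a $\Sigma_{\nbI}$-homomorphism into \A,
\[
\nu(\alpha \vee \neg \alpha) = \nu(\alpha) \vee_\A \nu(\neg \alpha),
\]
and because $\nu(\neg \alpha) \in \neg_\A \nu(\alpha) \subseteq \{b \in A : \nu(\alpha) \vee_\A b = 1_\A\}$ by item~2 of Definition~\ref{defRNmat-nbI}, it follows that $\nu(\alpha \vee \neg\alpha)=1_\A \in D$.

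Preservation under Modus Ponens is equally straightforward: if $\nu(\alpha)=1_\A$ and $\nu(\alpha \to \beta)=1_\A$, then since the $\Sigma^{\cplp}$-reduct of \A\ is a Boolean algebra, $1_\A = \nu(\alpha \to \beta) = \nu(\alpha) \to_\A \nu(\beta) = 1_\A \to_\A \nu(\beta) = \nu(\beta)$, so $\nu(\beta) \in D$. Consequently, the same property lifts from single valuations to the relation $\Vdash^{\nbI}_{\F}$: whenever $\Gamma \Vdash^{\nbI}_{\F} \alpha$ and $\Gamma \Vdash^{\nbI}_{\F} \alpha \to \beta$, then $\Gamma \Vdash^{\nbI}_{\F} \beta$.

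I do not anticipate any significant obstacle here: the only novelty compared to the \bI\ case is the treatment of \textbf{Ax 11}, which is immediate from item~2 of Definition~\ref{defRNmat-nbI}; the structurality of the family \F\ plays no role in the argument (it only becomes relevant for completeness). The whole proof is therefore a one-line remark plus the inductive skeleton, and the paper can safely leave the details implicit, just as it did for \bI.
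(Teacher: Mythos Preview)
Your proposal is correct and follows essentially the same approach as the paper: the paper observes (in the paragraph immediately preceding the theorem) that RNmatrices for \nbI\ inherit all the properties of those for \bI, hence validate the \bI\ axioms and \MP, and then verifies \textbf{Ax~11} exactly as you do via item~2 of Definition~\ref{defRNmat-nbI}. The only minor quibble is your phrasing about ``the restriction of $\F$ to $\Sigma_{\bI}$-valuations'': the valuations in $\F$ are defined on $\mathbf{F}(\Sigma_{\nbI},\mathcal{V})$, so it is cleaner to say (as the paper does) that the same calculations validating the \bI\ axioms go through because the multialgebra and the valuations satisfy the same relevant conditions.
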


To prove completeness, we define the equivalence relation, for a fixed set of formulas $\Gamma$, between formulas of $\nbI$ such that $\alpha\equiv_{\Gamma}^{\nbI}\beta$ iff $\Gamma\vdash_{\nbI}\alpha\rightarrow\beta$ and $\Gamma\vdash_{\nbI}\beta\rightarrow\alpha$. This relation is a congruence with respect to the connectives in $\Sigma^{\cplp}$, and then $A^{\nbI}_{\Gamma}=\mathbf{F}(\Sigma_{\nbI},\mathcal{V})/\equiv_{\Gamma}^{\nbI}$ becomes  a Boolean algebra (as in the case of \bI). By defining, for classes of formulas $[\alpha]$ and $[\beta]$, $\neg[\alpha]=\{[\neg \varphi] \ : \  \varphi\in[\alpha]\}$ and $[\alpha]\inc[\beta]$ as in $\mathcal{A}^{\bI}_{\Gamma}$, we obtain a $\Sigma_{\nbI}$-multialgebra $\mathcal{A}^{\nbI}_{\Gamma}$, which we shall call {\em the Lindenbaum-Tarski multialgebra of $\nbI$ associated to $\Gamma$}.
It is easy to see that the reduct of $\mathcal{A}^{\nbI}_{\Gamma}$ to $\Sigma_{\bI}$ satisfies the conditions of item~1 of Definition~\ref{RN4bI}. In addition, proving that $[\alpha]\vee[\beta]=1$, for any $[\beta]\in \neg[\alpha]$, is also immediate. By taking $D$ and \F\ over $\mathcal{A}^{\bI}_{\Gamma}$ as in Definition~\ref{defRNmat-nbI}, the induced RNmatrix for $\nbI$ will be called {\em the Lindenbaum-Tarski RNmatrix of $\nbI$ associated to $\Gamma$}. From this we prove, by adapting the proof of Theorem~\ref{compleRNmatbI}, the following result:

\begin{teo} [Completeness of \nbI\ w.r.t. RNmatrices] \label{complRNnbI}
Given formulas $\Gamma\cup\{\varphi\}$ of $\nbI$, if $\Gamma\Vdash_{\mathcal{F}}^{\nbI}\varphi$ then $\Gamma\vdash_{\nbI}\varphi$.
\end{teo}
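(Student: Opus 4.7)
The plan is to prove the statement by contraposition, adapting the proof of Theorem~\ref{compleRNmatbI} to the expanded signature $\Sigma_{\nbI}$. Assume $\Gamma \nvdash_{\nbI}\varphi$. Since \nbI\ is Tarskian and finitary, there exists a $\varphi$-saturated set $\Delta$ in \nbI\ with $\Gamma\subseteq\Delta$, and all the construction will take place inside the Lindenbaum-Tarski RNmatrix of \nbI\ associated to $\Delta$, as defined right before the theorem.

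The first step is to verify that this structure really satisfies every clause of Definition~\ref{defRNmat-nbI}. The paper has already observed that the $\Sigma_{\bI}$-reduct of $\mathcal{A}^{\nbI}_{\Delta}$ meets the conditions of item~1 of Definition~\ref{RN4bI} (the congruence property of $\equiv_{\Delta}^{\nbI}$ for the positive connectives, commutativity of \inc, and the equation $a \wedge b \wedge c = 0$ when $c \in a \inc b$ are all inherited from the axioms shared with \bI). For item~2, given any $[\beta] \in \neg[\alpha]$, say $[\beta] = [\neg\varphi]$ with $\varphi\in[\alpha]$, axiom \textbf{Ax~11} yields $\Delta \vdash_{\nbI} \varphi\vee\neg\varphi$, and using $\Delta \vdash_{\nbI} \varphi\rightarrow\alpha$ we obtain $\Delta \vdash_{\nbI} \alpha\vee\neg\varphi$, so $[\alpha]\vee[\beta]=1$.

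The second step is to display the canonical valuation $\nu:\mathbf{F}(\Sigma_{\nbI},\mathcal{V})\to A^{\nbI}_{\Delta}$ given by $\nu(\alpha)=[\alpha]$. For $\#\in\{\vee,\wedge,\rightarrow\}$ the relation $\equiv_{\Delta}^{\nbI}$ is a congruence, so $\nu(\alpha\#\beta)=[\alpha]\#_{\mathcal{A}}[\beta]=\nu(\alpha)\#_{\mathcal{A}}\nu(\beta)$ deterministically. By the definitions of $\neg_{\mathcal{A}}$ and $\inc_{\mathcal{A}}$ in $\mathcal{A}^{\nbI}_{\Delta}$, we have $[\neg\alpha]\in\neg_{\mathcal{A}}[\alpha]$ and $[\alpha\inc\beta]\in [\alpha]\inc_{\mathcal{A}}[\beta]$, so $\nu$ is a valuation over the underlying Nmatrix. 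Moreover, since \textbf{Comm} guarantees $[\alpha\inc\beta]=[\beta\inc\alpha]$, we have $\nu(\alpha\inc\beta)=\nu(\beta\inc\alpha)$, so $\nu\in\mathcal{F}$.

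Finally, because $\Delta$ is $\varphi$-saturated it is a closed theory, which gives $\nu(\alpha)=1$ iff $\Delta\vdash_{\nbI}\alpha$ iff $\alpha\in\Delta$. Hence $\nu(\gamma)=1$ for every $\gamma\in\Gamma$ but $\nu(\varphi)=0$, yielding $\Gamma\nVdash_{\mathcal{F}}^{\nbI}\varphi$. The main obstacle is essentially bookkeeping: one must confirm that the non-deterministic multioperator $\neg_{\mathcal{A}}$ interacts correctly with the Boolean operations of the $\Sigma_{\bI}$-reduct so that item~2 of Definition~\ref{defRNmat-nbI} holds for \emph{every} representative $\varphi\in[\alpha]$, which is precisely where \textbf{Ax~11} together with the congruence of $\equiv_{\Delta}^{\nbI}$ for the positive fragment does the work.
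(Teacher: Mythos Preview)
Your proposal is correct and follows essentially the same route as the paper: contraposition, passage to a $\varphi$-saturated extension $\Delta$, construction of the Lindenbaum--Tarski RNmatrix $\mathcal{A}^{\nbI}_{\Delta}$, and use of the canonical valuation $\nu(\alpha)=[\alpha]$, exactly as in the adaptation of Theorem~\ref{compleRNmatbI} that the paper invokes. The only blemish is a notational clash: you reuse the letter $\varphi$ both for the target formula and for a representative of $[\alpha]$ when verifying item~2 of Definition~\ref{defRNmat-nbI}; pick a fresh symbol (e.g.\ $\psi\in[\alpha]$) there to avoid confusion.
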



\subsection{Two Decision Methods for \nbI} \label{dec-nbI}

Take the Boolean algebra $\textbf{2}$ again and expand the $\Sigma_{\bI}$-multialgebra $\textbf{2}_{\bI}$ with  an unary multioperation $\neg$ such that  $\neg 0=\{1\}$ and $\neg 1=\{0,1\}$. With these multioperations $\textbf{2}$ becomes a $\Sigma_{\nbI}$-multialgebra $\textbf{2}_{\nbI}$ which satisfies the conditions of item~1 of Definition~\ref{RN4bI}. Furthermore, for any $y\in\neg x$, $x\vee y=1$. 
Let $\M^\textbf{2}_{\nbI}=(\textbf{2}_{\nbI}, \{1\}, \mathcal{F}_{\textbf{2}_{\nbI}})$  be the RNmatrix for \nbI\  defined from $\textbf{2}_{\nbI}$ according to Definition~\ref{defRNmat-nbI}. The proof of the next results is immediate:

\begin{prop}
A map $\nu:\mathbf{F}(\Sigma_{\nbI},\mathcal{V})\rightarrow\{0,1\}$ is a bivaluation for $\nbI$ if, and only if, it is a valuation for the Nmatrix $(\textbf{2}_{\nbI}, \{1\})$ which lies in $\mathcal{F}_{\textbf{2}_{\nbI}}$.
\end{prop}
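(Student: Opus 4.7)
The plan is to argue by a direct correspondence between the two definitions, extending the argument already used for Proposition~\ref{bivaluations are homomorphisms} in the case of \bI. First I would observe that on the reduct to $\Sigma^{\cplp}$ the multioperations of $\textbf{2}_{\nbI}$ coincide with the deterministic operations of the two-valued Boolean algebra $\textbf{2}$, so conditions 1, 2 and 3 in the definition of a bivaluation for \nbI\ are literally the statement that $\nu$ is a homomorphism for $\vee$, $\wedge$ and $\to$.

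Next, for the incompatibility connective, the table of $\inc$ in $\textbf{2}_{\nbI}$ is inherited verbatim from $\textbf{2}_{\bI}$: the only deterministic entry is $1\inc 1=\{0\}$ and all other entries are $\{0,1\}$. Therefore condition 4 (whenever $\nu(\alpha\inc\beta)=1$, then $\nu(\alpha)=0$ or $\nu(\beta)=0$) is equivalent to $\nu(\alpha\inc\beta)\in\inc_{\textbf{2}_{\nbI}}(\nu(\alpha),\nu(\beta))$, and condition 5 is exactly the membership condition defining $\mathcal{F}_{\textbf{2}_{\nbI}}$.

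The only genuinely new point is the equivalence between condition 6 and the constraint imposed by the multioperator $\neg$ of $\textbf{2}_{\nbI}$, where $\neg 0=\{1\}$ and $\neg 1=\{0,1\}$. Suppose $\nu$ is a bivaluation for \nbI: if $\nu(\alpha)=0$ then by condition 6 contrapositively we cannot have $\nu(\neg\alpha)=0$, so $\nu(\neg\alpha)=1\in\neg 0$; if $\nu(\alpha)=1$ then any value of $\nu(\neg\alpha)$ lies in $\neg 1=\{0,1\}$ trivially. Conversely, if $\nu(\neg\alpha)\in\neg_{\textbf{2}_{\nbI}}(\nu(\alpha))$ and $\nu(\neg\alpha)=0$, then $\nu(\alpha)\neq 0$ because $0\notin\neg 0=\{1\}$, so $\nu(\alpha)=1$, which is exactly condition 6.

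Assembling these equivalences yields the biconditional. I do not expect any real obstacle: the proof is almost entirely clerical, matching each clause of the bivaluation definition with the corresponding membership condition in the Nmatrix or with the defining condition of $\mathcal{F}_{\textbf{2}_{\nbI}}$, using that the non-classical multioperators $\inc$ and $\neg$ in $\textbf{2}_{\nbI}$ were engineered precisely to mirror clauses 4 and 6.
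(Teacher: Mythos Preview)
Your proposal is correct and follows exactly the approach the paper intends: the paper states that the proof is immediate and gives no further details, and your clause-by-clause matching of the bivaluation conditions with the Nmatrix membership constraints (plus the $\mathcal{F}_{\textbf{2}_{\nbI}}$ condition) is precisely the verification being left to the reader. The only novelty beyond Proposition~\ref{bivaluations are homomorphisms} is the treatment of $\neg$, which you handle correctly.
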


\begin{cor}
Given formulas $\Gamma\cup\{\varphi\}$ of $\nbI$, $\Gamma\vDash_{\nbI}\varphi$ iff $\Gamma\vDash_{\M^\textbf{2}_{\nbI}}\varphi$. Hence, $\Gamma\vdash_{\nbI}\varphi$ iff $\Gamma\vDash_{\M^\textbf{2}_{\nbI}}\varphi$.
\end{cor}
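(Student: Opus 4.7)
The plan is to obtain the corollary directly from the preceding proposition together with the bivaluation soundness and completeness results for $\nbI$. The preceding proposition establishes that the set of bivaluations for $\nbI$ (as maps $\nu:\mathbf{F}(\Sigma_{\nbI},\mathcal{V})\rightarrow\{0,1\}$ satisfying conditions 1--6) coincides, as a set of functions, with the set $\mathcal{F}_{\textbf{2}_{\nbI}}$ of valuations over the Nmatrix $(\textbf{2}_{\nbI},\{1\})$ that satisfy the commutativity restriction defining $\M^\textbf{2}_{\nbI}$. Moreover, in both semantical frameworks the designated set is $\{1\}$, so the notion of `satisfies $\Gamma$' and `satisfies $\varphi$' are literally the same.

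Given this identification of valuation classes and designated values, I would argue: for the forward direction of the first equivalence, suppose $\Gamma\vDash_{\nbI}\varphi$ and take any $\nu\in\mathcal{F}_{\textbf{2}_{\nbI}}$ with $\nu(\gamma)=1$ for every $\gamma\in\Gamma$; by the proposition $\nu$ is a bivaluation for $\nbI$, so $\nu(\varphi)=1$, i.e.\ $\nu(\varphi)\in\{1\}$, which gives $\Gamma\vDash_{\M^\textbf{2}_{\nbI}}\varphi$. The converse direction is entirely symmetric, using again that every $\nbI$-bivaluation is an element of $\mathcal{F}_{\textbf{2}_{\nbI}}$.

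The second equivalence follows at once by chaining the first with the soundness and completeness of $\nbI$ with respect to bivaluations: $\Gamma\vdash_{\nbI}\varphi$ iff $\Gamma\vDash_{\nbI}\varphi$ iff $\Gamma\vDash_{\M^\textbf{2}_{\nbI}}\varphi$.

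There is essentially no obstacle here; the work has already been done in the proposition (which requires verifying that conditions 1--5 of a $\bI$-bivaluation correspond exactly to being a homomorphism into the Boolean reduct of $\textbf{2}_{\nbI}$ with the stipulated behavior on $\inc$, that condition 6 matches the definition $\neg 0=\{1\}$, $\neg 1=\{0,1\}$, and that the commutativity clause matches membership in $\mathcal{F}_{\textbf{2}_{\nbI}}$) and in the earlier soundness/completeness theorems. The corollary is essentially a bookkeeping observation that the two sets of admissible evaluations coincide.
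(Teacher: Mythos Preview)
Your proposal is correct and matches the paper's approach exactly: the paper simply states that the proof is immediate, and your argument spells out precisely that immediacy, namely that the preceding proposition identifies $\nbI$-bivaluations with the valuations in $\mathcal{F}_{\textbf{2}_{\nbI}}$ (with the same designated set $\{1\}$), after which the second equivalence follows by the already-established soundness and completeness of $\nbI$ with respect to bivaluations.
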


By writing the row-branching table for a formula $\varphi$ according to the Nmatrix $(\textbf{2}_{\nbI}, \{1\})$ and erasing the rows where $\alpha\inc\beta$ and $\beta\inc\alpha$ receive different values, we obtain a decision method for $\nbI$ based on row-branching, row-eliminating truth tables.

A second decision procedure can be obtained by tableaux.
By adding the  rule
\[\frac{\textsf{0}(\neg\varphi)}{\textsf{1}(\varphi)}\]
to the labelled tableau calculus $\mathbb{T}_{\bI}$, we obtain a tableau calculus  $\mathbb{T}_{\nbI}$ for $\nbI$ which is again sound and complete, and therefore constitutes another decision method for this logic. Observe that the notion of complete branch of a tableau in  $\mathbb{T}_{\nbI}$ does not need to consider, besides the labelled formulas $\textsf{0}(\varphi\inc\psi)$ and  $\textsf{L}(\gamma)$ for $\gamma$ a variable, labelled formulas of the form $\textsf{1}(\neg\varphi)$   (since there are no rules to apply to them).


\subsection{Collapsing axioms for \nbI}

Now that we have a paraconsistent negation, we are capable of doing to $\textbf{ci}$ and $\textbf{cl}$ the same we did to $\textbf{ciw}$ when we transformed it into $\textbf{ciw}^{\inc}$. Thus, consider the following axiom schemas:
\[\tag{$\textbf{ci}^{\inc}$}\neg(\alpha\inc\beta)\rightarrow(\alpha\wedge\beta)\]
\[\tag{$\textbf{cl}^{\inc}$}\neg(\alpha\wedge\beta)\rightarrow(\alpha\inc\beta).\]
Adding to $\nbI$ any of the two axioms collapses the system to $\textbf{mbC}$ (i.e., $\textbf{CPL}$ with a paraconsistent negation), with $\alpha\inc\beta$ being equivalent to $\alpha\wedge\beta\rightarrow\bot_{\alpha\beta}$, that is, $\alpha{\Uparrow}\beta$.

One can actually prove a stronger assertion: instances of $\textbf{ci}^{\inc}$ and $\textbf{cl}^{\inc}$ actually imply their corresponding instances of $\textbf{ciw}^{\inc}$ in $\nbI$, that is,  each of $\neg(\alpha\inc\beta)\rightarrow(\alpha\wedge\beta)$ and $\neg(\alpha\wedge\beta)\rightarrow(\alpha\inc\beta)$ implies $(\alpha\inc\beta)\vee(\alpha\wedge\beta)$ in $\nbI$. And given that $\textbf{ciw}^{\inc}$ implies the equivalence between $\alpha\inc\beta$ and $\alpha{\Uparrow}\beta$, certainly each of $\textbf{ci}^{\inc}$ and $\textbf{cl}^{\inc}$ implies it too.


\subsection{Some Extensions of \nbI}

Notice that the basic axiom $\textbf{Ip}$ of $\bI$ is quite similar in its structure to the basic axiom $\textbf{bc1}$ of $\textbf{mbC}$, with $\neg\alpha$ replaced by $\beta$ and $\circ\alpha$ replaced by $\alpha\inc\beta$.

An attempt to achieve something similar with the axiom $\textbf{ciw}$ is not successful, as $\textbf{ciw}^{\inc}$ collapses the incompatibility operator \inc\ with the Sheffer's stroke $\Uparrow$. However, in $\nbI$, where we have at our disposal a non-classical negation $\neg$, one can adapt $\textbf{ciw}$ to the language $\Sigma_{\nbI}$, instead of generalizing it to $\textbf{ciw}^{\inc}$. The same can be done with axioms $\textbf{ci}$ and $\textbf{cl}$ for \lfis, producing three logics that are both logics of incompatibility and formal inconsistency.

So, over the signature $\Sigma_{\nbI}$, we consider the logics $\nbIciw$, $\nbIci$ and $\nbIcl$, obtained from $\nbI$ by addition, respectively,  of the schemas

\[\tag{$\textbf{ciw}^{*}$}(\alpha\inc\neg\alpha)\vee(\alpha\wedge\neg\alpha)\]
\[\tag{$\textbf{ci}^{*}$}\neg(\alpha\inc\neg\alpha)\rightarrow(\alpha\wedge\neg\alpha)\]
\[\tag{$\textbf{cl}^{*}$}\neg(\alpha\wedge\neg\alpha)\rightarrow(\alpha\inc\neg\alpha)\]

\begin{prop} \label{ciw-weaker}
Axiom $\textbf{ciw}^{*}$ is derivable in both $\nbIci$ and $\nbIcl$. Hence, $\nbIciw$ is weaker than both logics $\nbIci$ and $\nbIcl$.
\end{prop}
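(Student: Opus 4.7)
The plan is to use the excluded middle axiom \textbf{Ax 11} applied to suitable instances, and then reason by cases using the proof-by-cases rule from Lemma~\ref{L1}. Since both arguments have exactly the same shape, I will describe them in parallel.

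For \nbIci, the idea is to apply \textbf{Ax 11} to the formula $\alpha\inc\neg\alpha$, obtaining $\vdash_{\nbIci}(\alpha\inc\neg\alpha)\vee\neg(\alpha\inc\neg\alpha)$. Then I would split into cases. If $\alpha\inc\neg\alpha$ holds, then \textbf{Ax 6} immediately gives $(\alpha\inc\neg\alpha)\vee(\alpha\wedge\neg\alpha)$. If $\neg(\alpha\inc\neg\alpha)$ holds, then \textbf{MP} with the instance $\neg(\alpha\inc\neg\alpha)\rightarrow(\alpha\wedge\neg\alpha)$ of $\textbf{ci}^{*}$ yields $\alpha\wedge\neg\alpha$, and then \textbf{Ax 7} gives $(\alpha\inc\neg\alpha)\vee(\alpha\wedge\neg\alpha)$. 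Combining via the proof-by-cases rule in Lemma~\ref{L1}(3) concludes the derivation.

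For \nbIcl, I would dually apply \textbf{Ax 11} to the formula $\alpha\wedge\neg\alpha$, obtaining $\vdash_{\nbIcl}(\alpha\wedge\neg\alpha)\vee\neg(\alpha\wedge\neg\alpha)$. If $\alpha\wedge\neg\alpha$ holds, then \textbf{Ax 7} gives the required disjunction. If $\neg(\alpha\wedge\neg\alpha)$ holds, then \textbf{MP} with the instance of $\textbf{cl}^{*}$ gives $\alpha\inc\neg\alpha$, and \textbf{Ax 6} gives the required disjunction. Again, proof-by-cases from Lemma~\ref{L1}(3) finishes the job.

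The second claim of the proposition, that \nbIciw\ is strictly weaker than both \nbIci\ and \nbIcl, follows at once from the first part: since $\textbf{ciw}^{*}$ is derivable in the two extensions, any theorem of \nbIciw\ is also a theorem of \nbIci\ and of \nbIcl. There is really no obstacle in the derivation itself; the only subtlety worth noting is that \textbf{Ax 11} is schematic in $\alpha$, so it applies to the compound formulas $\alpha\inc\neg\alpha$ and $\alpha\wedge\neg\alpha$, which is precisely what makes the case split available in the first place. Strictness of the inclusion is not asserted in the statement (only that $\textbf{ciw}^{*}$ is derivable), so no countermodel is required at this stage.
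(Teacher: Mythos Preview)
Your proof is correct and follows essentially the same approach as the paper: both argue by excluded middle (the paper writes \textbf{Ax 10}, evidently a typo for \textbf{Ax 11} in the $\nbI$ numbering) applied to $\alpha\inc\neg\alpha$ for $\nbIci$ and then invoke proof-by-cases, with the $\nbIcl$ case handled analogously. Your version is slightly more explicit in naming \textbf{Ax 6}/\textbf{Ax 7} where the paper just says ``using $\cplp$'', and you spell out the dual case-split on $\alpha\wedge\neg\alpha$ for $\nbIcl$, but the underlying argument is identical.
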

\begin{proof}
Note that $(\alpha\inc\neg\alpha) \vdash_{\nbIci} (\alpha\inc\neg\alpha)\vee(\alpha\wedge\neg\alpha)$ (just using \cplp) and also $\neg(\alpha\inc\neg\alpha) \vdash_{\nbIci} (\alpha\inc\neg\alpha)\vee(\alpha\wedge\neg\alpha)$, by $\textbf{ci}^{*}$ and \cplp. Then $\vdash_{\nbIci} (\alpha\inc\neg\alpha)\vee(\alpha\wedge\neg\alpha)$, by proof-by-cases and \textbf{Ax 10}. The proof for $\nbIcl$ is analogous.
\end{proof}

\subsection{Bivaluation Semantics for the Extensions of \nbI}

Let $\mathcal{L}$ denote an arbitrary logic among $\nbIciw$, $\nbIci$ and $\nbIcl$.

\begin{mydef}
A bivaluation for $\mathcal{L}$ is a bivaluation for $\nbI$ satisfying:
\begin{enumerate}
\item if $\nu(\alpha\inc\neg\alpha)=0$, then $\nu(\alpha)=\nu(\neg\alpha)=1$;
\item \begin{enumerate}
\item if $\nu(\neg(\alpha\inc\neg\alpha))=1$, then $\nu(\alpha)=\nu(\neg\alpha)=1$ \ (if $\mathcal{L}=\nbIci$);
\item if $\nu(\neg(\alpha\wedge\neg\alpha))=1$, then $\nu(\alpha\inc\neg\alpha)=1$ \ (if $\mathcal{L}=\nbIcl$).
\end{enumerate}
\end{enumerate}
\end{mydef}

The semantical consequence relation $\vDash_{\mathcal{L}}$ with respect to bivaluations for $\mathcal{L}$ is defined as usual. Since bivaluations for $\mathcal{L}$ are bivaluations for $\nbI$ satisfying some additional property, $\vDash_{\mathcal{L}}$ models all the axiom schemas of $\nbI$ as well as \MP. Proving that $\vDash_{\nbIciw}$ also models $\textbf{ciw}^{*}$, and analogously for the other logics, is effortless. This produces the following:

\begin{teo} [Soundness of  $\mathcal{L}$ w.r.t. bivaluation semantics]
Given formulas $\Gamma\cup\{\varphi\}$ over $\Sigma_{\nbI}$, if $\Gamma\vdash_{\mathcal{L}}\varphi$ then $\Gamma\vDash_{\mathcal{L}}\varphi$.
\end{teo}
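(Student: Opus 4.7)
The plan is to follow the standard soundness pattern: reduce everything to (i) verifying that every axiom schema of $\mathcal{L}$ is valid under $\vDash_{\mathcal{L}}$, and (ii) verifying that Modus Ponens preserves validity; and then conclude by straightforward induction on the length of a derivation $\alpha_1,\ldots,\alpha_n=\varphi$ from $\Gamma$, showing that if $\nu(\gamma)=1$ for every $\gamma\in\Gamma$, then $\nu(\alpha_i)=1$ for each $i$.

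For (i), since a bivaluation for $\mathcal{L}$ is by definition a bivaluation for $\nbI$ that satisfies additional clauses, the axioms inherited from $\nbI$ (and hence from $\bI$ and \cplp) are automatically validated by the soundness result already established for $\nbI$. So the real work is confined to the new axiom schemas. For $\textbf{ciw}^{*}$, suppose $\nu((\alpha\inc\neg\alpha)\vee(\alpha\wedge\neg\alpha))=0$; then both disjuncts are $0$. From $\nu(\alpha\inc\neg\alpha)=0$ the new clause~1 gives $\nu(\alpha)=\nu(\neg\alpha)=1$, so $\nu(\alpha\wedge\neg\alpha)=1$, a contradiction. For $\textbf{ci}^{*}$, assume $\nu(\neg(\alpha\inc\neg\alpha))=1$; then clause 2(a) for $\nbIci$ yields $\nu(\alpha)=\nu(\neg\alpha)=1$, hence $\nu(\alpha\wedge\neg\alpha)=1$, which by the clause for $\rightarrow$ in a $\nbI$-bivaluation makes $\nu(\neg(\alpha\inc\neg\alpha)\rightarrow(\alpha\wedge\neg\alpha))=1$. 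For $\textbf{cl}^{*}$, suppose $\nu(\neg(\alpha\wedge\neg\alpha))=1$; clause 2(b) for $\nbIcl$ gives $\nu(\alpha\inc\neg\alpha)=1$, whence the implication is satisfied. In all three cases, the fact that the other disjunct/antecedent is inessential ensures the axiom is valid at every $\nu$.

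For (ii), the preservation of $\vDash_{\mathcal{L}}$ under Modus Ponens is immediate from the clause 3 defining bivaluations (inherited from $\bI$): if $\nu(\alpha)=1$ and $\nu(\alpha\rightarrow\beta)=1$, then by the $\rightarrow$-clause one must have $\nu(\beta)=1$. Combining (i) and (ii) and proceeding by induction on $n$, we obtain that $\nu(\varphi)=1$ whenever $\nu$ validates all of $\Gamma$, which is precisely $\Gamma\vDash_{\mathcal{L}}\varphi$.

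I do not expect any genuine obstacle: the proof is a routine soundness verification once the extra clauses defining $\mathcal{L}$-bivaluations are matched one-to-one against the additional axiom schemas, as above. The closest thing to a subtlety is noticing that the validation of $\textbf{ciw}^{*}$ in $\nbIci$ and $\nbIcl$ need not be checked separately, since Proposition~\ref{ciw-weaker} shows that $\textbf{ciw}^{*}$ is already derivable from $\textbf{ci}^{*}$ and from $\textbf{cl}^{*}$; thus once the soundness argument works for $\textbf{ci}^{*}$ and $\textbf{cl}^{*}$, its validity in the respective extensions follows from (ii) alone. This is the only place where a uniform statement for all three logics $\mathcal{L}$ requires a small bookkeeping remark.
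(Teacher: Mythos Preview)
Your proposal is correct and follows essentially the same approach as the paper's own (very terse) proof: observe that $\mathcal{L}$-bivaluations are $\nbI$-bivaluations, so the inherited axioms and \MP\ are already handled, and then verify the extra axiom schema(s) directly against the extra bivaluation clauses. Your explicit case checks for $\textbf{ciw}^{*}$, $\textbf{ci}^{*}$, $\textbf{cl}^{*}$ merely spell out what the paper calls ``effortless''; the final bookkeeping remark about $\textbf{ciw}^{*}$ in $\nbIci$ and $\nbIcl$ is harmless but unnecessary, since $\textbf{ciw}^{*}$ is not an axiom of those calculi (only a derived schema).
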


As a consequence of the previous result, it is easy to prove that $\mathcal{L}$ is still paraconsistent w.r.t. $\neg$: it is enough considering  a bivaluation $\nu$ for $\mathcal{L}$ such that $\nu(p)=\nu(\neg p)=1$ and $\nu(q)=0$, where $p$ and $q$ are two different variables. This shows that $p,\neg p \nvDash_{\mathcal{L}} q$ and then  $p,\neg p \nvdash_{\mathcal{L}} q$, by soundness.

\begin{prop} None of the schemas $\textbf{ci}^{*}$ and $\textbf{cl}^{*}$ can be derived in
$\nbIciw$. Thus, $\nbIci$ and $\nbIcl$ are strictly stronger than $\nbIciw$.
\end{prop}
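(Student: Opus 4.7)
The strategy I would pursue is to exploit the already-established soundness of $\nbIciw$ with respect to bivaluation semantics: showing that neither schema is derivable reduces to exhibiting, for each, a bivaluation for $\nbIciw$ that falsifies a single instance. I would work with one propositional variable $p$ and target the instances $\neg(p\inc\neg p)\rightarrow(p\wedge\neg p)$ and $\neg(p\wedge\neg p)\rightarrow(p\inc\neg p)$.

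For the first instance, I would build a bivaluation $\nu_1$ with $\nu_1(p)=1$ and $\nu_1(\neg p)=0$. Because $\nu_1(\neg p)=0$, clause 4 of the $\bI$-bivaluation conditions is automatically satisfied for $p\inc\neg p$, so I can freely set $\nu_1(p\inc\neg p)=1$; this in particular makes the $\nbIciw$-condition ($\nu(\alpha\inc\neg\alpha)=0\Rightarrow\nu(\alpha)=\nu(\neg\alpha)=1$) vacuous at $\alpha=p$. Then $\nu_1(\neg(p\inc\neg p))$ is unconstrained by the $\nbI$-clause on $\neg$, and setting it to $1$ together with $\nu_1(p\wedge\neg p)=0$ drives the instance to $0$.

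For the second instance, I would take $\nu_2$ with $\nu_2(p)=\nu_2(\neg p)=1$. Clause 4 of the $\bI$-bivaluation conditions then forces $\nu_2(p\inc\neg p)=0$, and the $\nbIciw$-condition is satisfied because it precisely demands that $p$ and $\neg p$ both receive $1$. Since $\nu_2(p\wedge\neg p)=1$, the value $\nu_2(\neg(p\wedge\neg p))$ is free, and setting it to $1$ again yields the instance evaluated at $0$.

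What remains is to extend each partial specification to a total bivaluation for $\nbIciw$, which is routine structural recursion: clauses 1--3 determine the values on $\vee,\wedge,\rightarrow$; the negation clause forces $\nu(\neg\alpha)=1$ precisely when $\nu(\alpha)=0$ and leaves a free choice otherwise; and for $\inc$ one need only respect clauses 4 and 5 together with the $\nbIciw$-condition at pairs of the form $(\alpha,\neg\alpha)$. The only bookkeeping worth noting is the symmetry requirement (clause 5), which one handles by recursing on formulas in a fixed linear order and reusing earlier choices. The conceptual point behind the argument, and the step I expect to be the main (if modest) obstacle, is realizing that the $\nbIciw$-condition only pins down $\nu(\alpha\inc\neg\alpha)$ itself, whereas $\textbf{ci}^{*}$ and $\textbf{cl}^{*}$ couple that value with $\nu(\neg(\alpha\inc\neg\alpha))$ and $\nu(\neg(\alpha\wedge\neg\alpha))$ respectively; it is exactly the remaining non-determinism of the paraconsistent negation on true formulas that the two bivaluations exploit to separate the logics.
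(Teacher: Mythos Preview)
Your proposal is correct and follows essentially the same approach as the paper: exhibit, via soundness of $\nbIciw$ with respect to bivaluations, a bivaluation falsifying an instance of each schema. The only cosmetic difference is that for $\textbf{ci}^{*}$ the paper takes $\nu(\alpha)=0$, $\nu(\neg\alpha)=1$ rather than your $\nu_1(p)=1$, $\nu_1(\neg p)=0$; both choices make $\nu(\alpha\wedge\neg\alpha)=0$ while leaving $\nu(\alpha\inc\neg\alpha)=1$ and $\nu(\neg(\alpha\inc\neg\alpha))=1$ available, so the arguments are interchangeable.
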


\begin{proof}
Take a bivaluation $\nu$ for $\nbIciw$ such that:\begin{enumerate}
\item $\nu(\alpha)=0$, $\nu(\neg\alpha)=1$, $\nu(\alpha\inc\neg\alpha)=1$ and $\nu(\neg(\alpha\inc\neg\alpha))=1$. Then, $\nu(\neg(\alpha\inc\neg\alpha)\rightarrow(\alpha\wedge\neg\alpha))=0$. This shows that $\textbf{ci}^{*}$ is not valid in $\nbIciw$ and so it is not derivable in $\nbIciw$, by soundness.
\item $\nu(\alpha)=1$, $\nu(\neg\alpha)=1$ (hence $\nu(\alpha\inc\neg\alpha)=0$), and $\nu(\neg(\alpha\wedge\neg\alpha))=1$. From this, $\nu(\neg(\alpha\wedge\neg\alpha)\rightarrow(\alpha\inc\neg\alpha))=0$.  This shows that $\textbf{cl}^{*}$ is not valid in $\nbIciw$ and so it is not derivable in $\nbIciw$, by soundness.\qedhere
\end{enumerate}
\end{proof}

To prove completeness, take a set of formulas $\Gamma$ which is $\varphi$-saturated in $\mathcal{L}$. By defining $\nu:\mathbf{F}(\Sigma_{\nbI},\mathcal{V})\rightarrow\{0,1\}$ such that $\nu(\gamma)=1$ iff $\gamma\in\Gamma$, it is easy to prove that $\nu$ is a bivaluation for $\mathcal{L}$. Hence:

\begin{teo}  [Completeness of  $\mathcal{L}$ w.r.t. bivaluation semantics]
Given formulas $\Gamma\cup\{\varphi\}$ over $\Sigma_{\nbI}$, if $\Gamma\vDash_{\mathcal{L}}\varphi$ then $\Gamma\vdash_{\mathcal{L}}\varphi$.
\end{teo}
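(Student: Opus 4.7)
The plan is to follow the same contrapositive strategy used for \bI\ and \nbI, namely producing a bivaluation that separates $\Gamma$ from $\varphi$ out of a saturated theory. Suppose $\Gamma \nvdash_{\mathcal{L}} \varphi$. Since $\mathcal{L}$ is Tarskian and finitary (it is a Hilbert calculus with \MP\ as the only inference rule), the Lindenbaum-{\L}o\'s-style argument cited earlier gives a $\varphi$-saturated set $\Delta$ in $\mathcal{L}$ containing $\Gamma$; in particular $\Delta$ is closed under $\vdash_{\mathcal{L}}$. Define $\nu:\mathbf{F}(\Sigma_{\nbI},\mathcal{V})\to \{0,1\}$ by $\nu(\gamma)=1$ iff $\gamma\in\Delta$.

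The next step is to verify that $\nu$ is a bivaluation for $\mathcal{L}$. Since $\mathcal{L}$ extends $\nbI$, the argument already established in the proofs of completeness for \bI\ and \nbI\ shows that $\nu$ satisfies clauses 1--5 for \bI-bivaluations and clause~6 for \nbI-bivaluations: these come from $\varphi$-saturation together with the availability of \cplp, \textbf{Comm}, \textbf{Ip} and \textbf{Ax 11}. The new work is to check the clauses specific to each $\mathcal{L}$. For $\nbIciw$, suppose $\nu(\alpha\inc\neg\alpha)=0$, i.e.\ $(\alpha\inc\neg\alpha)\notin\Delta$; since $\Delta\vdash_{\mathcal{L}}(\alpha\inc\neg\alpha)\vee(\alpha\wedge\neg\alpha)$ by $\textbf{ciw}^{*}$, and $\Delta$ is a closed, $\varphi$-saturated (hence prime) theory, we get $(\alpha\wedge\neg\alpha)\in\Delta$, so $\alpha,\neg\alpha\in\Delta$ and $\nu(\alpha)=\nu(\neg\alpha)=1$. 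For $\nbIci$, suppose $\nu(\neg(\alpha\inc\neg\alpha))=1$; applying $\textbf{ci}^{*}$ inside $\Delta$ yields $(\alpha\wedge\neg\alpha)\in\Delta$, again giving $\nu(\alpha)=\nu(\neg\alpha)=1$; moreover by Proposition~\ref{ciw-weaker} the $\nbIciw$-clause holds automatically. For $\nbIcl$, if $\nu(\neg(\alpha\wedge\neg\alpha))=1$, then by $\textbf{cl}^{*}$ we get $(\alpha\inc\neg\alpha)\in\Delta$, so $\nu(\alpha\inc\neg\alpha)=1$; the $\nbIciw$-clause follows again by Proposition~\ref{ciw-weaker}.

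Having established that $\nu$ is an $\mathcal{L}$-bivaluation, the conclusion is immediate: $\nu(\gamma)=1$ for every $\gamma\in\Gamma\subseteq\Delta$ while $\nu(\varphi)=0$ because $\Delta\nvdash_{\mathcal{L}}\varphi$ implies $\varphi\notin\Delta$. Hence $\Gamma\nvDash_{\mathcal{L}}\varphi$, which is the contrapositive of the desired statement.

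The main obstacle is really the bookkeeping for the three specific clauses above; the primeness and deductive closure of $\varphi$-saturated sets make each verification short, but one must take care in $\nbIci$ and $\nbIcl$ that the $\nbIciw$-clause still holds, which is precisely where Proposition~\ref{ciw-weaker} is essential. Everything else is a routine transplant of the completeness proof for \nbI.
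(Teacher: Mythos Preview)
Your proof is correct and follows exactly the paper's approach: the contrapositive argument via a $\varphi$-saturated extension $\Delta$ and the characteristic-function bivaluation. You have simply spelled out the verification of the $\mathcal{L}$-specific clauses that the paper leaves as ``easy to prove'', and your appeal to Proposition~\ref{ciw-weaker} to secure clause~1 in the $\nbIci$ and $\nbIcl$ cases is the right move.
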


\begin{teo}
$\nbIci$ is equivalent to the system obtained from $\nbIciw$ by adding the axiom schema
\[\tag{$\textbf{cc}^{*}$} (\alpha\inc\neg\alpha)\inc\neg(\alpha\inc\neg\alpha).\]
\end{teo}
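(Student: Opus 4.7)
The plan is to establish derivability in both directions, writing $\mathcal{S}$ for $\nbIciw + \textbf{cc}^{*}$. Both halves hinge on applying the schema $\textbf{ciw}^{*}$ (an axiom of $\nbIciw$, hence of $\mathcal{S}$; and, by Proposition~\ref{ciw-weaker}, a theorem of $\nbIci$) at a \emph{compound} formula, together with $\textbf{Ip}$ and proof-by-cases from Lemma~\ref{L1}, using throughout the deduction meta-theorem.

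For $\vdash_{\nbIci} \textbf{cc}^{*}$: let $\beta := \alpha\inc\neg\alpha$ and instantiate $\textbf{ciw}^{*}$ at $\beta$, obtaining the theorem $(\beta\inc\neg\beta)\vee(\beta\wedge\neg\beta)$. The first disjunct is precisely the desired $\textbf{cc}^{*}$, so by proof-by-cases it suffices to show that $\beta\wedge\neg\beta\vdash_{\nbIci}\beta\inc\neg\beta$. Here the $\nbIci$-specific strength enters: $\beta\wedge\neg\beta$ yields $\neg\beta=\neg(\alpha\inc\neg\alpha)$, and $\textbf{ci}^{*}$ applied to $\alpha$ then yields $\alpha\wedge\neg\alpha$; combined with $\beta=\alpha\inc\neg\alpha$ and $\textbf{Ip}$, this trivializes, so $\beta\wedge\neg\beta$ proves any formula, in particular $\beta\inc\neg\beta$.

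For $\vdash_{\mathcal{S}} \textbf{ci}^{*}$: again set $\beta := \alpha\inc\neg\alpha$. The new axiom $\textbf{cc}^{*}$ gives $\vdash_{\mathcal{S}}\beta\inc\neg\beta$, and $\textbf{Ip}$ then delivers $\beta,\neg\beta\vdash_{\mathcal{S}}\gamma$ for every $\gamma$, in particular $\beta,\neg\beta\vdash_{\mathcal{S}}\alpha\wedge\neg\alpha$. Now invoke $\textbf{ciw}^{*}$ at $\alpha$ to get the theorem $\beta\vee(\alpha\wedge\neg\alpha)$; adding $\neg\beta$ as an extra hypothesis and reasoning by cases, one obtains $\alpha\wedge\neg\alpha$ in each branch (by the previous trivialization in one case, trivially in the other). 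Hence $\neg\beta\vdash_{\mathcal{S}}\alpha\wedge\neg\alpha$, and the deduction meta-theorem yields $\textbf{ci}^{*}$.

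The genuinely delicate step is the first direction: one must notice that $\textbf{ciw}^{*}$ can be instantiated at the compound formula $\beta=\alpha\inc\neg\alpha$ (not merely at variables), which reduces matters to the explosiveness of $\beta\wedge\neg\beta$ in $\nbIci$. Everything else is a short chain combining $\textbf{Ip}$, $\textbf{ci}^{*}$ and basic positive-logic manipulations already available in \cplp.
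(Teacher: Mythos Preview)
Your proof is correct. Both directions are handled cleanly: in the first you use $\textbf{ciw}^{*}$ at $\beta=\alpha\inc\neg\alpha$ together with $\textbf{ci}^{*}$ and $\textbf{Ip}$ to explode the second disjunct; in the second you use $\textbf{cc}^{*}$ plus $\textbf{Ip}$ to make $\{\beta,\neg\beta\}$ explosive, then $\textbf{ciw}^{*}$ at $\alpha$ and proof-by-cases. All the background machinery (deduction meta-theorem, proof-by-cases, derivability of $\textbf{ciw}^{*}$ in $\nbIci$) is available from Lemma~\ref{L1} and Proposition~\ref{ciw-weaker}.

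The paper takes a different route: rather than giving a direct derivation, it simply observes that $\textbf{cc}^{*}$ is the $\Sigma_{\nbI}$-counterpart of the schema $\cons\cons\alpha$ and invokes the analogous result for \lfis\ (Proposition~3.1.10 of~\cite{ParLog}, stating that $\mbCci$ is $\mbCciw$ plus $\cons\cons\alpha$). Your argument is more self-contained and makes explicit why the equivalence holds internally to the incompatibility calculus, whereas the paper's approach leverages the already-established translation of ideas from the consistency setting. Both are perfectly valid; yours has the advantage of not depending on an external reference, while the paper's highlights the structural parallel with the \lfi\ hierarchy.
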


\begin{proof}
See Proposition $3.1.10$ of \cite{ParLog} for an equivalent result (by observing that $\textbf{cc}^{*}$ corresponds to the schema $\cons\cons\alpha$ in the signature of \lfis).\qedhere
\end{proof}


\subsection{RNmatrix Semantics for the Extensions of \nbI}

The RNmatrix semantics for \nbI\ can be easily adapted in order to deal with its axiomatic extensions.

\begin{mydef} \label{defRNmat-ext-nbI}
Let  $\mathcal{L} \in \{\nbIciw, \nbIci,\nbIcl\}$. A  RNmatrix for $\mathcal{L}$ is a triple $(\A,D,\F)$ where
\begin{enumerate}
\item $\mathcal{A}=(A, \{\sigma_{\mathcal{A}}\}_{\sigma\in\Sigma_{\nbI}})$ is a $\Sigma_{\nbI}$-multialgebra  for \nbI\ (that is, satisfies the conditions of Definition~\ref{defRNmat-nbI}) such that, in addition,  
$$\mbox{if $b\in\neg a$, then } \ {\sim}(a\wedge b) \in a\inc b.$$
\item \begin{enumerate}
\item If $b\in\neg a$, then $a\wedge b\in\neg(a\inc b)$ (if $\mathcal{L}=\nbIci$).
\item If $b\in\neg a$, then ${\sim}(a\wedge b)\in\neg(a\wedge b)$ (if $\mathcal{L}=\nbIcl$).
\end{enumerate}
\item $D=\{1_\A\}$.
\item The set \F\ is formed by the valuations $\nu$ over \A\ (i.e., $\nu:\mathbf{F}(\Sigma_{\nbI}, \mathcal{V}) \to  \A$  is a $\Sigma_{\nbI}$-homomorphism) such that, for all formulas $\alpha$ and $\beta$:
\begin{enumerate}
\item $\nu(\alpha\inc\beta)=\nu(\beta\inc\alpha)$;
\item $\nu(\alpha\inc\neg\alpha)=\sneg(\nu(\alpha) \land \nu(\neg\alpha))$;
\item $\nu(\neg(\alpha\inc\neg\alpha))=\nu(\alpha)\wedge\nu(\neg\alpha)$ (if $\mathcal{L}=\nbIci$).
\item $\nu(\alpha\inc\neg\alpha)=\nu(\neg(\alpha\wedge\neg\alpha))$ (if $\mathcal{L}=\nbIcl$).
\end{enumerate} 
\end{enumerate}
\end{mydef}

It is immediate to see that the set \F\ above is well-defined\footnote{Just to give an example, consider clause 4(d). Since $\nu$ is a homomorphism, $\nu(\neg\alpha) \in \neg\nu(\alpha)$, $\nu(\alpha\inc\neg\alpha) \in \nu(\alpha)\inc\nu(\neg\alpha)$ and $\nu(\neg(\alpha\wedge\neg\alpha)) \in \neg (\nu(\alpha)\wedge\nu(\neg\alpha))$. By clause~1 we can always choose $\nu(\alpha\inc\neg\alpha)=\sneg(\nu(\alpha)\wedge\nu(\neg\alpha))$, thus satisfying 4(b). By clause 2(b) we can always choose $\nu(\neg(\alpha\wedge\neg\alpha))= \sneg(\nu(\alpha)\wedge\nu(\neg\alpha))$. But, by clause 4(b), the latter coincides with $\nu(\alpha\inc\neg\alpha)$, thus satisfying 4(d).} and that, moreover, any RNmatrix for $\mathcal{L}$ is structural. 

The consequence relation for $\mathcal{L}$ w.r.t. RNmatrices will be denoted by $\Vdash_{\mathcal{F}}^{\mathcal{L}}$. By the very definitions, all the axioms and the inference rule of \nbI\ are valid for the RNmatrices for $\mathcal{L}$. Moreover, the following result can be easily proved:

\begin{teo} [Soundness of $\mathcal{L}$ w.r.t. RNmatrices]
Given formulas $\Gamma\cup\{\varphi\}$ over $\Sigma_{\nbI}$, if $\Gamma\vdash_{\mathcal{L}}\varphi$, then $\Gamma\vDash_{\mathcal{F}}^{\mathcal{L}}\varphi$.
\end{teo}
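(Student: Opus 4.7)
The plan is to proceed by induction on the length of a derivation $\alpha_1,\ldots,\alpha_n=\varphi$ of $\varphi$ from $\Gamma$ in $\mathcal{L}$, fixing an arbitrary RNmatrix $(\A,D,\F)$ for $\mathcal{L}$ and an arbitrary $\nu \in \F$ with $\nu(\gamma)=1_\A$ for every $\gamma \in \Gamma$, and showing that $\nu(\alpha_i)=1_\A$ for all $i$. The inductive step for \MP{} is immediate because the $\Sigma^{\cplp}$-reduct of \A\ is a Boolean algebra: if $\nu(\alpha)=1_\A$ and $\nu(\alpha\to\beta)=1_\A=\nu(\alpha)\to_\A\nu(\beta)$, then $\nu(\beta)=1_\A$. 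For the base case we must validate each axiom schema.

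Since any RNmatrix $(\A,D,\F)$ for $\mathcal{L}$ is, by clauses 1--3 of Definition~\ref{defRNmat-ext-nbI}, in particular a RNmatrix for \nbI, and every valuation in $\F$ here is also a valuation in the corresponding set of valuations for \nbI\ (clause 4(a) is the only condition required there), the soundness theorem for \nbI\ already gives us that every axiom of \nbI, including \textbf{Ip}, \textbf{Comm}, and \textbf{Ax 11}, receives value $1_\A$ under $\nu$. Therefore the only real work is to verify the three new schemas, one per logic, using the matching clauses 4(b)--(d) that were added to the RNmatrix definition precisely to enforce them.

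For $\textbf{ciw}^{*}$ in any of the three logics, clause 4(b) gives $\nu(\alpha\inc\neg\alpha)=\sneg\bigl(\nu(\alpha)\land\nu(\neg\alpha)\bigr)$, so setting $x=\nu(\alpha)\land\nu(\neg\alpha)$ we get $\nu\bigl((\alpha\inc\neg\alpha)\lor(\alpha\land\neg\alpha)\bigr)=\sneg x \lor x = 1_\A$ by the Boolean law of excluded middle. For $\textbf{ci}^{*}$ in $\nbIci$, clause 4(c) gives $\nu(\neg(\alpha\inc\neg\alpha))=\nu(\alpha)\land\nu(\neg\alpha)=\nu(\alpha\land\neg\alpha)$, so the corresponding instance evaluates to $x \to_\A x = 1_\A$. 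For $\textbf{cl}^{*}$ in $\nbIcl$, clause 4(d) gives $\nu(\neg(\alpha\land\neg\alpha))=\nu(\alpha\inc\neg\alpha)$, and once again the implication evaluates to $1_\A$.

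There is no genuine obstacle here; the only subtlety worth being careful about is that the extra conditions in clauses~1, 2(a), 2(b) of Definition~\ref{defRNmat-ext-nbI} on the multioperations (namely that $\sneg(a\land b)\in a\inc b$, that $a\land b\in\neg(a\inc b)$, and that $\sneg(a\land b)\in\neg(a\land b)$) are exactly what guarantees the corresponding valuation clauses 4(b)--(d) are satisfiable, i.e.\ that $\F$ is non-empty and stable under substitution, as already commented in the footnote accompanying the definition. Once this is accepted, the verification of the three new axioms is a direct computation in the Boolean reduct, and the induction closes.
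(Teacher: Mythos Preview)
Your proof is correct and follows essentially the same approach as the paper, which merely remarks that ``by the very definitions, all the axioms and the inference rule of \nbI\ are valid for the RNmatrices for $\mathcal{L}$'' and that the remaining axioms are easily checked. You have simply made explicit the inductive structure and the one-line Boolean computations (via clauses~4(b)--(d)) that validate $\textbf{ciw}^{*}$, $\textbf{ci}^{*}$, and $\textbf{cl}^{*}$, which is exactly what the paper leaves to the reader.
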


We define an equivalence relation, for formulas $\Gamma\cup\{\alpha, \beta\}$ over $\Sigma_{\nbI}$, by $\alpha\equiv^{\mathcal{L}}_{\Gamma}\beta$ iff $\Gamma\vdash_{\mathcal{L}}\alpha\rightarrow\beta$ and $\Gamma\vdash_{\mathcal{L}}\beta\rightarrow\alpha$. As we have done earlier, the well-defined quotient $A^{\mathcal{L}}_{\Gamma}=\mathbf{F}(\Sigma_{\nbI}, \mathcal{V})/\equiv^{\mathcal{L}}_{\Gamma}$ becomes a Boolean algebra with the natural operations. Now, for equivalence classes of formulas $[\alpha]$ and $[\beta]$, define $\neg\alpha=\{[\neg\varphi] \ : \  \varphi\in[\alpha]\}$ and $[\alpha]\inc[\beta]=\{[\varphi\inc\psi] \ : \ \varphi\in[\alpha] \ \mbox{ and } \ \psi\in[\beta]\}$.

Evidently $\mathcal{A}^{\mathcal{L}}_{\Gamma}$ is a multialgebra for $\nbI$, as analyzed right before Theorem~\ref{complRNnbI}, so it only remains to be shown that $\mathcal{A}$ is a multialgebra for $\mathcal{L}$ in the sense of Definition~\ref{defRNmat-ext-nbI}. Consider the following:

\begin{lema}\label{equivalences in L} The following holds in  $\mathcal{A}^{\mathcal{L}}_{\Gamma}$:\\[1mm]
1. $[\alpha\inc\neg\alpha]={\sim}[\alpha\wedge\neg\alpha]$,  where $\sneg$ is the Boolean complement in $\mathcal{A}^{\mathcal{L}}_{\Gamma}$.\\
2. $[\neg(\alpha\inc\neg\alpha)]=[\alpha\wedge\neg\alpha]$ (if $\mathcal{L}=\nbIci$).\\
3. $[\neg(\alpha\wedge\neg\alpha)]=[\alpha\inc\neg\alpha]$  (if $\mathcal{L}=\nbIcl$).
\end{lema}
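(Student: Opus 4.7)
The plan is to reduce each claimed equality in $\mathcal{A}^{\mathcal{L}}_{\Gamma}$ to an interderivability statement in $\mathcal{L}$ from $\Gamma$, since $[\varphi]=[\psi]$ iff $\Gamma\vdash_{\mathcal{L}}\varphi\to\psi$ and $\Gamma\vdash_{\mathcal{L}}\psi\to\varphi$. Recall that in $\mathcal{A}^{\mathcal{L}}_{\Gamma}$ the Boolean complement is $\sneg[\varphi]=[\varphi\to\bot_{\varphi\varphi}]$, since $[\bot_{\varphi\psi}]=0$ for every choice of $\varphi,\psi$.

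For item 1, the direction $[\alpha\inc\neg\alpha]\le\sneg[\alpha\wedge\neg\alpha]$ amounts to $\Gamma\vdash_{\mathcal{L}}(\alpha\inc\neg\alpha)\to\sneg(\alpha\wedge\neg\alpha)$, which by two applications of the deduction meta-theorem together with \textbf{Ax 4} and \textbf{Ax 5} reduces to the derivation $\alpha\inc\neg\alpha,\,\alpha,\,\neg\alpha\vdash_{\mathcal{L}}\bot_{(\alpha\wedge\neg\alpha)(\alpha\wedge\neg\alpha)}$, an immediate instance of $\textbf{Ip}$ (combined with the fact, already established, that $\bot_{\varphi\psi}\vdash_{\bI}\chi$ for any $\chi$). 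For the converse I would invoke $\textbf{ciw}^{*}$, which is available in all three systems (as an axiom of $\nbIciw$ and, by Proposition~\ref{ciw-weaker}, derivable in $\nbIci$ and $\nbIcl$), and apply proof-by-cases on $(\alpha\inc\neg\alpha)\vee(\alpha\wedge\neg\alpha)$: the left disjunct gives the conclusion directly, while the right disjunct combined with the hypothesis $\sneg(\alpha\wedge\neg\alpha)$ produces $\bot$ and therefore any formula, including $\alpha\inc\neg\alpha$.

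For item 2, one direction is exactly an instance of $\textbf{ci}^{*}$. For the converse $\alpha\wedge\neg\alpha\vdash_{\nbIci}\neg(\alpha\inc\neg\alpha)$ I would apply excluded middle \textbf{Ax 11} to the formula $\alpha\inc\neg\alpha$ and split cases: if $\alpha\inc\neg\alpha$ holds, then together with $\alpha$ and $\neg\alpha$ (which are available from the hypothesis) $\textbf{Ip}$ yields any formula, in particular $\neg(\alpha\inc\neg\alpha)$; the other case is trivial. Item 3 is handled dually: one direction is $\textbf{cl}^{*}$, and for the converse I would apply \textbf{Ax 11} to $\alpha\wedge\neg\alpha$, the positive branch trivializing via $\textbf{Ip}$ from the hypothesis $\alpha\inc\neg\alpha$, and the negative branch being the conclusion itself.

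I do not foresee any real obstacle here: the argument is a systematic use of $\textbf{Ip}$ as a source of trivialization, combined with excluded middle for the paraconsistent negation $\neg$ and the pertinent $\textbf{ciw}^{*}/\textbf{ci}^{*}/\textbf{cl}^{*}$ schema. The only care required is to keep track that $\textbf{ciw}^{*}$ is indeed available across the three extensions, and to unravel the definition of the Boolean complement $\sneg$ in $\mathcal{A}^{\mathcal{L}}_{\Gamma}$ when reading item~1.
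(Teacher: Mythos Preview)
Your proposal is correct and matches the paper's own proof essentially step for step: item~1 via $\textbf{Ip}$ and $\textbf{ciw}^{*}$, and items~2 and~3 via $\textbf{Ip}$, excluded middle (\textbf{Ax~11}) with proof-by-cases, together with the relevant axiom $\textbf{ci}^{*}$ or $\textbf{cl}^{*}$. The only difference is presentational---you spell out the reduction to interderivability and the definition of $\sneg$ more explicitly than the paper does.
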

\begin{proof}
1. It is an easy consequence of axioms \textbf{Ip} and $\textbf{ciw}^{*}$.\\
2. By \textbf{Ip} it follows that $\alpha \land \neg\alpha, \alpha\inc\neg\alpha \vdash_{\mathcal{L}} \neg(\alpha\inc\neg\alpha)$. By definition of derivation in a Hilbert calculus, $\alpha \land \neg\alpha, \neg(\alpha\inc\neg\alpha) \vdash_{\mathcal{L}} \neg(\alpha\inc\neg\alpha)$. Using proof-by-cases and \textbf{Ax 11} it follows that $\alpha \land \neg\alpha \vdash_{\mathcal{L}} \neg(\alpha\inc\neg\alpha)$. Now, if $\mathcal{L}=\nbIci$, then it also holds that $\neg(\alpha\inc\neg\alpha)\vdash_{\mathcal{L}} \alpha \land \neg\alpha$, by $\textbf{ci}^{*}$ and \MP.\\
3. It is proved analogously to item 2, but now by using $\textbf{cl}^{*}$.
\end{proof}

\begin{cor} \label{multiL}
The multialgebra $\mathcal{A}^{\mathcal{L}}_{\Gamma}$ satisfies  conditions 1 and 2 of Definition~\ref{defRNmat-ext-nbI}.
\end{cor}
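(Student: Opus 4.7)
The plan is to unfold carefully what the elements of $\mathcal{A}^{\mathcal{L}}_{\Gamma}$ look like, and then read off each required containment from the corresponding clause of Lemma~\ref{equivalences in L}. Fix $a,b \in A^{\mathcal{L}}_{\Gamma}$ with $b \in \neg a$. By the definition of the multioperator $\neg$ in $\mathcal{A}^{\mathcal{L}}_{\Gamma}$, we can write $a = [\alpha]$ and $b = [\neg\varphi]$ for some $\varphi \in [\alpha]$, i.e.\ $\varphi \equiv^{\mathcal{L}}_\Gamma \alpha$. Since $\equiv^{\mathcal{L}}_\Gamma$ is a congruence over $\Sigma^{\cplp}$, we have $a\wedge b = [\alpha] \wedge [\neg\varphi] = [\alpha \wedge \neg\varphi] = [\varphi \wedge \neg\varphi]$, so all computations can be carried out using the single representative $\varphi$. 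This reduction is the key step: although $\equiv^{\mathcal{L}}_\Gamma$ need not be a congruence for $\neg$ or $\inc$, the conjunction $a \wedge b$ admits a canonical ``same-variable'' representative.

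For condition~1, by Lemma~\ref{equivalences in L}(1) applied to $\varphi$, we obtain $[\varphi \inc \neg\varphi] = {\sim}[\varphi \wedge \neg\varphi] = {\sim}(a \wedge b)$. On the other hand, by the definition of $\inc$ in the Lindenbaum-Tarski multialgebra, $[\varphi \inc \neg\varphi] \in [\alpha] \inc [\neg\varphi] = a \inc b$. Combining these two facts gives ${\sim}(a\wedge b) \in a \inc b$, as required.

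For condition~2(a), assume $\mathcal{L}=\nbIci$. Take $c = [\varphi \inc \neg\varphi] \in a \inc b$ (as above); then by Lemma~\ref{equivalences in L}(2), $[\neg(\varphi \inc \neg\varphi)] = [\varphi \wedge \neg\varphi] = a\wedge b$. Hence $a \wedge b = [\neg(\varphi \inc \neg\varphi)] \in \neg c \subseteq \bigcup_{c' \in a \inc b} \neg c' = \neg(a \inc b)$. For condition~2(b), assume $\mathcal{L}=\nbIcl$. By Lemma~\ref{equivalences in L}(3) applied to $\varphi$, $[\neg(\varphi \wedge \neg\varphi)] = [\varphi \inc \neg\varphi]$, and by part~(1) of the same lemma this equals ${\sim}[\varphi \wedge \neg\varphi] = {\sim}(a\wedge b)$. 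Since $[\neg(\varphi \wedge \neg\varphi)] \in \neg[\varphi \wedge \neg\varphi] = \neg(a \wedge b)$ by definition of the multioperator $\neg$, we conclude ${\sim}(a \wedge b) \in \neg(a \wedge b)$.

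The only real subtlety to guard against is the non-congruence of $\equiv^{\mathcal{L}}_\Gamma$ w.r.t.\ $\neg$ and $\inc$: one must remember that $b \in \neg a$ only records that \emph{some} representative $\varphi$ of the class $a$ satisfies $b = [\neg\varphi]$, and throughout the argument the proof must stick with this fixed $\varphi$. Once this is respected, the corollary follows directly from the three equalities collected in Lemma~\ref{equivalences in L}, with no further syntactic manipulations needed.
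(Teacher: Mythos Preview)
Your proof is correct and follows essentially the same route as the paper's: fix a representative $\varphi$ with $a=[\varphi]$ and $b=[\neg\varphi]$, then read off each condition directly from the corresponding clause of Lemma~\ref{equivalences in L}. Your additional remarks about the non-congruence of $\equiv^{\mathcal{L}}_\Gamma$ with respect to $\neg$ and $\inc$, and your explicit unfolding of $\neg(a\inc b)$ as a union, make the argument slightly more explicit than the paper's version, but the mathematical content is identical.
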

\begin{proof}
As observed above,  $\mathcal{A}^{\mathcal{L}}_{\Gamma}$ is a multialgebra for $\nbI$. Now,  if $[\beta]\in\neg[\alpha]$ then $\beta=\neg\varphi$, for a $\varphi$ such that $[\varphi]=[\alpha]$. By Lemma~\ref{equivalences in L}(1), $[\varphi\inc\neg\varphi]=\sneg[\varphi \land \neg\varphi] = \sneg([\alpha] \land [\beta])$. By definition of  $\mathcal{A}^{\mathcal{L}}_{\Gamma}$, $[\varphi\inc\neg\varphi] \in [\alpha] \inc [\beta]$, that is, $\sneg([\alpha] \land [\beta]) \in [\alpha] \inc [\beta]$. This shows that  condition~1 of Definition~\ref{defRNmat-ext-nbI}  is satisfied. Suppose now that $\mathcal{L}=\nbIci$ and $[\beta]\in\neg[\alpha]$. Then, $\beta=\neg\varphi$ for a $\varphi$ such that $[\varphi]=[\alpha]$. By definition of  $\mathcal{A}^{\mathcal{L}}_{\Gamma}$, $[\neg(\varphi\inc\neg\varphi)] \in \neg([\alpha] \inc [\beta])$. But $[\neg(\varphi\inc\neg\varphi)]=[\varphi\land\neg\varphi]$, by Lemma~\ref{equivalences in L}(2), and $[\varphi\land\neg\varphi]=[\alpha] \land [\beta]$. Hence, condition~2(a)  of Definition~\ref{defRNmat-ext-nbI} is satisfied. Finally, let $\mathcal{L}=\nbIcl$ and $[\beta]\in\neg[\alpha]$. Then, $\beta=\neg\varphi$, for a $\varphi$ such that $[\varphi]=[\alpha]$. By definition of  $\mathcal{A}^{\mathcal{L}}_{\Gamma}$ and by Lemma~\ref{equivalences in L} items~1 and~3, $\sneg([\alpha] \land [\beta]) = \sneg[\varphi \land \neg\varphi]=[\varphi \inc\neg\varphi]=[\neg(\varphi \land \neg\varphi)]$. But $[\neg(\varphi \land \neg\varphi)] \in \neg([\alpha] \land [\beta])$, hence condition~2(b)  of Definition~\ref{defRNmat-ext-nbI} is also satisfied.
\end{proof}

\begin{lema} \label{canvalL} Suppose that $\Gamma$ is $\varphi$-saturated in $\mathcal{L}$ for some formula $\varphi$.
Then, the map $\nu_{\mathcal{L}}:\mathbf{F}(\Sigma_{\nbI}, \mathcal{V}) \to  \A^{\mathcal{L}}_{\Gamma}$ such that $\nu_{\mathcal{L}}(\alpha)=[\alpha]$  is a $\Sigma_{\nbI}$-homomorphism which satisfies conditions 4(a) to 4(d) of Definition~\ref{defRNmat-ext-nbI}.
\end{lema}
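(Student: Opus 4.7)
My plan is to verify two things separately: that $\nu_{\mathcal{L}}$ is a $\Sigma_{\nbI}$-homomorphism into the multialgebra $\A^{\mathcal{L}}_{\Gamma}$ (in the non-deterministic sense required for valuations over an Nmatrix), and that it satisfies each of the applicable conditions 4(a)--(d) of Definition~\ref{defRNmat-ext-nbI}. Both verifications will be short; essentially all the work has been done in the preceding material, in particular in Lemma~\ref{equivalences in L} and Corollary~\ref{multiL}.

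For the homomorphism property I would split by connective. For $\# \in \{\vee, \wedge, \rightarrow\}$, the fact that $\equiv^{\mathcal{L}}_{\Gamma}$ is a congruence (with respect to these classical connectives) gives $\nu_{\mathcal{L}}(\alpha \# \beta) = [\alpha \# \beta] = [\alpha] \#_{\A} [\beta] = \nu_{\mathcal{L}}(\alpha) \#_{\A} \nu_{\mathcal{L}}(\beta)$, since the operations on $\A^{\mathcal{L}}_{\Gamma}$ are defined precisely in this way. For the non-deterministic operators, I would appeal directly to their definitions: by construction $[\alpha] \inc_{\A} [\beta] = \{[\varphi\inc\psi] : \varphi \in [\alpha],\ \psi \in [\beta]\}$ contains $[\alpha\inc\beta]$, so $\nu_{\mathcal{L}}(\alpha\inc\beta) \in \nu_{\mathcal{L}}(\alpha)\inc_{\A}\nu_{\mathcal{L}}(\beta)$; and similarly $\neg_{\A}[\alpha] = \{[\neg\varphi] : \varphi \in [\alpha]\}$ contains $[\neg\alpha]$, giving $\nu_{\mathcal{L}}(\neg\alpha) \in \neg_{\A}\nu_{\mathcal{L}}(\alpha)$.

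For the four conditions on valuations in $\F$, each one is essentially a restatement of a fact already established. Condition 4(a), $\nu_{\mathcal{L}}(\alpha\inc\beta) = \nu_{\mathcal{L}}(\beta\inc\alpha)$, reduces to $[\alpha\inc\beta]=[\beta\inc\alpha]$, which is immediate from axiom \textbf{Comm}. Condition 4(b), $\nu_{\mathcal{L}}(\alpha\inc\neg\alpha) = {\sim}(\nu_{\mathcal{L}}(\alpha)\wedge\nu_{\mathcal{L}}(\neg\alpha))$, becomes $[\alpha\inc\neg\alpha] = {\sim}[\alpha\wedge\neg\alpha]$ after using the congruence to collapse $[\alpha]\wedge[\neg\alpha]$ to $[\alpha\wedge\neg\alpha]$, and this is exactly Lemma~\ref{equivalences in L}(1). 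For $\mathcal{L} = \nbIci$, condition 4(c) demands $[\neg(\alpha\inc\neg\alpha)] = [\alpha\wedge\neg\alpha]$, which is Lemma~\ref{equivalences in L}(2); and for $\mathcal{L} = \nbIcl$, condition 4(d) demands $[\alpha\inc\neg\alpha] = [\neg(\alpha\wedge\neg\alpha)]$, which is Lemma~\ref{equivalences in L}(3).

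There is no real obstacle here, since the substantive algebraic content sits in Lemma~\ref{equivalences in L} and Corollary~\ref{multiL}. Note that the $\varphi$-saturation hypothesis on $\Gamma$ is not used in this verification; I would flag that it is stated because this lemma is meant to be invoked in the subsequent completeness argument, where saturation guarantees that $\nu_{\mathcal{L}}(\gamma) = 1$ iff $\gamma \in \Gamma$ and hence that $\nu_{\mathcal{L}}(\varphi) \neq 1_{\A^{\mathcal{L}}_{\Gamma}}$, so that the induced valuation genuinely refutes $\Gamma \Vdash^{\mathcal{L}}_{\F} \varphi$.
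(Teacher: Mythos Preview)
Your proof is correct and follows the same approach as the paper: verify the homomorphism property connective by connective, then read off conditions~4(a)--(d) from \textbf{Comm} and Lemma~\ref{equivalences in L}. The paper's proof is simply a terser version of yours.

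One small point of divergence worth noting: the paper's proof attributes the homomorphism property to $\varphi$-saturation (``It is immediate to see that $\nu_{\mathcal{L}}$ is a $\Sigma_{\nbI}$-homomorphism, since $\Gamma$ is $\varphi$-saturated''), whereas you explicitly argue that the homomorphism property holds by construction of the multioperations and flag that saturation is not used. Your analysis is the more careful one here: the multioperations $\neg_{\A}$ and $\inc_{\A}$ on $\A^{\mathcal{L}}_{\Gamma}$ are defined precisely so that $[\neg\alpha]\in\neg_{\A}[\alpha]$ and $[\alpha\inc\beta]\in[\alpha]\inc_{\A}[\beta]$ for any $\Gamma$, and Lemma~\ref{equivalences in L} is likewise stated without any saturation hypothesis. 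Your remark that saturation is stated only because it is needed downstream (to ensure $\nu_{\mathcal{L}}(\varphi)\neq 1_{\A^{\mathcal{L}}_{\Gamma}}$ in the completeness argument) is accurate.
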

\begin{proof}
It is immediate to see that $\nu_{\mathcal{L}}$ is a $\Sigma_{\nbI}$-homomorphism, since $\Gamma$ is $\varphi$-saturated in $\mathcal{L}$. Clearly $\nu_{\mathcal{L}}$ satisfies condition~4(a) of Definition~\ref{defRNmat-ext-nbI}. Conditions~4(b), 4(c) and~4(d) are also satisfied as a direct consequence of Lemma~\ref{equivalences in L}.
\end{proof}

\begin{teo}  [Completeness of $\mathcal{L}$ w.r.t. RNmatrices]
Given formulas $\Gamma\cup\{\varphi\}$ over $\Sigma_{\nbI}$, if $\Gamma\Vdash_{\mathcal{F}}^{\mathcal{L}}\varphi$, then $\Gamma\vdash_{\mathcal{L}}\varphi$.
\end{teo}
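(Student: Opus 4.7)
The plan is to adapt the proof of Theorem~\ref{compleRNmatbI} (the completeness theorem for \bI\ w.r.t.\ RNmatrices) using the machinery already developed for $\mathcal{L}$. Specifically, I would argue by contraposition: assume $\Gamma\nvdash_{\mathcal{L}}\varphi$ and construct a valuation $\nu \in \F$ into some RNmatrix for $\mathcal{L}$ which assigns $1$ to every formula in $\Gamma$ but not to $\varphi$, yielding $\Gamma\nVdash_{\mathcal{F}}^{\mathcal{L}}\varphi$.

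First, since $\mathcal{L}$ is Tarskian and finitary (being a Hilbert calculus with \MP\ as the only rule), I invoke the standard Lindenbaum--\L{}o\'{s} style result (cited earlier from Wójcicki) to extend $\Gamma$ to a set $\Delta\supseteq\Gamma$ which is $\varphi$-saturated in $\mathcal{L}$. Then I consider the Lindenbaum--Tarski multialgebra $\mathcal{A}^{\mathcal{L}}_{\Delta}$ built as described right before Lemma~\ref{equivalences in L}, together with the distinguished element $1=[\top_{\alpha}]$ and the set $\F$ prescribed by Definition~\ref{defRNmat-ext-nbI}. By Corollary~\ref{multiL}, $\mathcal{A}^{\mathcal{L}}_{\Delta}$ satisfies conditions 1 and 2 of Definition~\ref{defRNmat-ext-nbI}, so the triple $(\mathcal{A}^{\mathcal{L}}_{\Delta},\{1\},\F)$ is indeed an RNmatrix for $\mathcal{L}$.

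Next, I define the canonical valuation $\nu_{\mathcal{L}}:\mathbf{F}(\Sigma_{\nbI},\mathcal{V})\to A^{\mathcal{L}}_{\Delta}$ by $\nu_{\mathcal{L}}(\alpha)=[\alpha]$. By Lemma~\ref{canvalL}, $\nu_{\mathcal{L}}$ is a $\Sigma_{\nbI}$-homomorphism satisfying conditions~4(a)--4(d), hence $\nu_{\mathcal{L}}\in\F$. The key observation, analogous to the one used in the proof of Theorem~\ref{compleRNmatbI}, is that since $\Delta$ is $\varphi$-saturated it is in particular a closed theory of $\mathcal{L}$, so $[\alpha]=[\top_{\alpha}]=1$ if and only if $\Delta\vdash_{\mathcal{L}}\alpha$ if and only if $\alpha\in\Delta$. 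From this, $\nu_{\mathcal{L}}(\gamma)=1$ for every $\gamma\in\Gamma$ (as $\Gamma\subseteq\Delta$), while $\nu_{\mathcal{L}}(\varphi)=0$ because $\varphi\notin\Delta$. This exhibits a valuation in $\F$ witnessing $\Gamma\nVdash_{\mathcal{F}}^{\mathcal{L}}\varphi$, as required.

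The main obstacle is not the saturation nor the existence of $\nu_{\mathcal{L}}$, both of which are routine by this point. The genuinely delicate step is verifying that $\nu_{\mathcal{L}}$ lies in $\F$, i.e.\ that conditions 4(b)--4(d) of Definition~\ref{defRNmat-ext-nbI} hold under the canonical assignment; this is exactly where axioms $\textbf{ciw}^{*}$, $\textbf{ci}^{*}$ and $\textbf{cl}^{*}$ enter the Lindenbaum--Tarski construction, forcing the equalities $[\alpha\inc\neg\alpha]=\sneg([\alpha]\land[\neg\alpha])$ and (for $\mathcal{L}=\nbIci$ or $\nbIcl$) the additional identifications with $[\neg(\alpha\inc\neg\alpha)]$ or $[\neg(\alpha\land\neg\alpha)]$. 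Fortunately, this computation has already been discharged in Lemma~\ref{equivalences in L}, so the completeness argument reduces to assembling these pieces.
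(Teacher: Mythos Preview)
Your proposal is correct and follows essentially the same approach as the paper's own proof: contraposition via a $\varphi$-saturated extension $\Delta$, the Lindenbaum--Tarski multialgebra $\mathcal{A}^{\mathcal{L}}_{\Delta}$ (shown to be an RNmatrix for $\mathcal{L}$ via Corollary~\ref{multiL}), and the canonical valuation $\nu_{\mathcal{L}}(\alpha)=[\alpha]$ (shown to lie in $\F$ via Lemma~\ref{canvalL}). Your identification of Lemma~\ref{equivalences in L} as the place where the axioms $\textbf{ciw}^{*}$, $\textbf{ci}^{*}$, $\textbf{cl}^{*}$ do the real work is exactly right.
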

\begin{proof} It is an adaptation of the one given for Theorem~\ref{compleRNmatbI}.
Thus, suppose that $\Gamma\nvdash_{\mathcal{L}}\varphi$, and let $\Delta$ be a $\varphi$-saturated set in $\mathcal{L}$ such that $\Gamma \subseteq \Delta$. Let $\mathcal{A}^{\mathcal{L}}_{\Delta}$ be the multialgebra defined as above. By Corollary~\ref{multiL}, it generates a RNmatrix for $\mathcal{L}$ (by taking $D$ and \F\ as in Definition~\ref{defRNmat-ext-nbI}). By Lemma~\ref{canvalL}, the map $\nu_{\mathcal{L}}:\mathbf{F}(\Sigma_{\nbI}, \mathcal{V})\rightarrow A^{\bI}_{\Delta}$ such that $\nu_{\mathcal{L}}(\alpha)=[\alpha]$ is a valuation for $\mathcal{L}$ over $\mathcal{A}^{\mathcal{L}}_{\Delta}$ which is in  $\mathcal{F}$. In addition, $\nu_{\mathcal{L}}(\alpha)=1$ if and only if $\Delta\vdash_{\mathcal{L}}\alpha$. From this, $\nu_{\mathcal{L}}(\gamma)=1$ for every $\gamma \in \Gamma$ but $\nu_{\mathcal{L}}(\varphi)=0$, since $\Delta\nvdash_{\bI}\varphi$. This shows that  $\Gamma\nVdash^{\mathcal{L}}_{\mathcal{F}}\varphi$.\qedhere
\end{proof}

\subsection{Decision Methods for the Extensions of \nbI}

Let $\textbf{2}$ be the two-element Boolean algebra and consider once again the $\Sigma_{\textbf{nbI}}$-multialgebra $\textbf{2}_{\textbf{nbI}}$ defined at the beginning of Subsection~\ref{dec-nbI}. It is immediate to see that $\textbf{2}_{\textbf{nbI}}$ satisfies  conditions 1 and 2 of Definition~\ref{defRNmat-ext-nbI}.
Let $\M^\textbf{2}_{\mathcal{L}}=(\textbf{2}_{\nbI}, \{1\}, \mathcal{F}_{\textbf{2}_{\mathcal{L}}})$  be the RNmatrix for $\mathcal{L}$  defined from $\textbf{2}_{\nbI}$ according to Definition~\ref{defRNmat-ext-nbI}. The proof of the next results is immediate:

\begin{prop}
A map $\nu:\mathbf{F}(\Sigma_{\nbI},\mathcal{V})\rightarrow\{0,1\}$ is a bivaluation for $\mathcal{L}$ if, and only if, it is a valuation for the Nmatrix $(\textbf{2}_{\nbI}, \{1\})$ which lies in $\mathcal{F}_{\textbf{2}_{\mathcal{L}}}$.
\end{prop}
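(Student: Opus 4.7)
The plan is to reduce the statement to the analogous proposition already established for $\nbI$ in Subsection~\ref{dec-nbI}, which gives the base equivalence between $\nbI$-bivaluations and valuations lying in $\mathcal{F}_{\textbf{2}_{\nbI}}$. Since $\textbf{2}_{\nbI}$ has been noted to satisfy conditions~1 and~2 of Definition~\ref{defRNmat-ext-nbI}, the only remaining content is to match, for each $\mathcal{L}$, the extra bivaluation clauses (clause~1, plus clause~2(a) or~2(b)) with the extra membership conditions for $\mathcal{F}_{\textbf{2}_{\mathcal{L}}}$, namely clauses~4(b), and 4(c) or 4(d), of Definition~\ref{defRNmat-ext-nbI}.

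The direction from valuation to bivaluation is immediate: clause~4(b) reads $\nu(\alpha\inc\neg\alpha)=\sneg(\nu(\alpha)\wedge\nu(\neg\alpha))$, which at once yields bivaluation clause~1, while clauses~4(c) and~4(d) literally imply clauses~2(a) and~2(b) in their respective systems.

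For the converse, assume $\nu$ is a bivaluation for $\mathcal{L}$. To obtain clause~4(b), split on $\nu(\alpha\inc\neg\alpha)$: the value $1$ forces $\nu(\alpha)=0$ or $\nu(\neg\alpha)=0$ by $\nbI$-bivaluation clause~4, so the right-hand side of 4(b) equals $1$; the value $0$ gives $\nu(\alpha)=\nu(\neg\alpha)=1$ by clause~1, so the right-hand side equals $0$. To obtain clause~4(c) in the $\nbIci$ case, split on $\nu(\alpha)\wedge\nu(\neg\alpha)$: when it equals $1$, the just-proved 4(b) gives $\nu(\alpha\inc\neg\alpha)=0$, whence the excluded middle clause~6 of the $\nbI$-bivaluation forces $\nu(\neg(\alpha\inc\neg\alpha))=1$; when it equals $0$, the contrapositive of clause~2(a) forces $\nu(\neg(\alpha\inc\neg\alpha))=0$. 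Clause~4(d) in the $\nbIcl$ case is verified analogously by a dichotomy on $\nu(\alpha\wedge\neg\alpha)$, invoking clause~6 when that value is $0$ and clause~2(b) to rule out $\nu(\neg(\alpha\wedge\neg\alpha))=1$ when it is $1$. The whole argument is essentially definition unfolding; the only subtlety, and the nearest thing to an obstacle, is to invoke the excluded middle clause of the $\nbI$-bivaluation at the right moments in the converse direction in order to pin down $\nu(\neg\gamma)$ from $\nu(\gamma)$.
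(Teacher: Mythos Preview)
Your proposal is correct and matches the paper's approach: the paper simply asserts that the proof is immediate, and what you have written is precisely the definition-unfolding argument that justifies this. Your case analyses for clauses~4(b), 4(c), and~4(d) are all sound, including the use of the contrapositive of the $\nbI$-bivaluation clause~6 to force $\nu(\neg\gamma)=1$ from $\nu(\gamma)=0$.
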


\begin{cor}
Given formulas $\Gamma\cup\{\varphi\}$ of $\nbI$, $\Gamma\vDash_{\mathcal{L}}\varphi$ iff $\Gamma\vDash_{\M^\textbf{2}_{\mathcal{L}}}\varphi$. Hence, $\Gamma\vdash_{\mathcal{L}}\varphi$ iff $\Gamma\vDash_{\M^\textbf{2}_{\mathcal{L}}}\varphi$.
\end{cor}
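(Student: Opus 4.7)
The proof I would write is essentially a two-step chaining that invokes the previous proposition and the soundness/completeness of $\mathcal{L}$ w.r.t.\ bivaluations, both of which have been established just above. No new ideas are required; the corollary is stated precisely because the substantive work has already been done.

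For the first equivalence, I would observe that by the preceding Proposition, the bivaluations for $\mathcal{L}$ are exactly the valuations over the Nmatrix $(\textbf{2}_{\nbI}, \{1\})$ which belong to $\mathcal{F}_{\textbf{2}_{\mathcal{L}}}$, under the obvious identification of the codomain $\{0,1\}$ of a bivaluation with the universe of the two-element Boolean algebra $\textbf{2}$. Under this identification, the designated set $D=\{1\}$ of $\M^\textbf{2}_{\mathcal{L}}$ corresponds precisely to the ``true'' value of a bivaluation. Since both semantical consequence relations are defined by the same clause --- namely, ``for every $\nu$ in the respective class, $\nu(\gamma)=1$ for all $\gamma \in \Gamma$ implies $\nu(\varphi)=1$'' --- and the classes of admissible $\nu$'s coincide, the relations $\vDash_{\mathcal{L}}$ and $\vDash_{\M^\textbf{2}_{\mathcal{L}}}$ are literally identical.

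For the second equivalence (the ``Hence'' part), I would simply chain the first equivalence with the soundness and completeness of $\mathcal{L}$ with respect to its bivaluation semantics, both of which were proved earlier in the subsection on bivaluation semantics for the extensions of \nbI. That is, $\Gamma\vdash_{\mathcal{L}}\varphi$ iff $\Gamma\vDash_{\mathcal{L}}\varphi$ (by soundness and completeness) iff $\Gamma\vDash_{\M^\textbf{2}_{\mathcal{L}}}\varphi$ (by the first equivalence).

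There is no real obstacle; the only point to be careful about is making the identification of the two codomains explicit so that ``valuation taking value $1$'' and ``bivaluation returning $1$'' genuinely refer to the same condition. If anything, I would make the proof literally a one-line reference, noting that the analogous corollary for $\bI$ and for $\nbI$ was handled in the same way in the previous subsections.
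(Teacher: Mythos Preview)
Your proposal is correct and matches the paper's intent: the paper states that ``the proof of the next results is immediate'' and gives no argument, so your expansion---invoking the preceding Proposition to identify bivaluations with elements of $\mathcal{F}_{\textbf{2}_{\mathcal{L}}}$, and then chaining with soundness and completeness of $\mathcal{L}$ w.r.t.\ bivaluations---is exactly the intended justification. The analogous corollaries for $\bI$ and $\nbI$ were handled identically (i.e., also left without proof), as you note.
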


The last result  shows that the  RNmatrix $\M^\textbf{2}_{\mathcal{L}}$ induces a decision procedure for $\mathcal{L}$. Indeed, it is easy to define row-branching, row-eliminating truth tables for the logics $\nbIciw$, $\nbIci$ and $\nbIcl$, the conditions for a row to be erased being:
\begin{enumerate}
\item $\alpha\inc\beta$ and $\beta\inc\alpha$ receive different values;
\item either $\alpha\inc\neg\alpha$ or $\neg\alpha\inc\alpha$ is $0$, and either $\alpha$ or $\neg\alpha$ is $0$;
\item\begin{enumerate}
\item in the case of $\nbIci$,  $\neg(\alpha\inc\neg\alpha)$ is $1$ and either $\alpha$ or $\neg\alpha$ is $0$;
\item in the case of $\nbIcl$, $\neg(\alpha\wedge\neg\alpha)$ is $1$ and: either $\alpha\inc\neg\alpha$ is $0$, or $\neg\alpha\inc\alpha$ is $0$, or $\alpha$ and $\neg\alpha$ are both $1$.
\end{enumerate}
\end{enumerate}

The second decision procedure for $\mathcal{L}$ is by means of tableaux. Consider the following four tableau rules:
$$
\begin{array}{cp{0.5cm}cp{0.5cm}cp{0.5cm}c}
\displaystyle \frac{\textsf{0}(\varphi\inc\neg\varphi)}{\begin{array}{c}\textsf{1}(\varphi) \\ \textsf{1}(\neg\varphi)\end{array}} & & \displaystyle \frac{\textsf{0}(\neg\varphi\inc\varphi)}{\begin{array}{c}\textsf{1}(\varphi) \\ \textsf{1}(\neg\varphi)\end{array}} & & \displaystyle \frac{\textsf{1}(\neg(\varphi\inc\neg\varphi))}{\begin{array}{c}\textsf{1}(\varphi) \\ \textsf{1}(\neg\varphi)\end{array}} & & \displaystyle \frac{\textsf{1}(\neg(\varphi\land\neg\varphi))}{\textsf{0}(\varphi)\mid\textsf{0}(\neg\varphi)}  \\[2mm]
&&&&\\[2mm]
\end{array}
$$

Let $\mathbb{T}_{\nbIciw}$, $\mathbb{T}_{\nbIci}$ and $\mathbb{T}_{\nbIcl}$ be the tableau calculi obtained from $\mathbb{T}_{\nbI}$ by adding the first and second rule, the first, the second and the third rule, and the first, the second and the fourth rule, respectively. It is easy to see that $\mathbb{T}_{\mathcal{L}}$ is sound and complete for $\mathcal{L}$, being so a decision procedure for that logic.

\section{Uncharacterizability Results for Logics of Incompatibility}\label{fourth}

All the systems of logics of formal incompatibility presented here were characterized exclusively by means of (restricted) non-deterministic semantics. A natural question is  if it is possible to find a more standard semantics for these logics.  The aim of this section is showing  that these logics cannot be characterized in terms of simpler (or better behaved) semantics,  such as algebraic semantics, finite logical matrices or even finite Nmatrices.

\subsection{Logics of Incompatibility are not algebraizable}

In this section it will be shown that all the logics of incompatibility presented here are not algebraizable even in  Blok and Pigozzi's sense (\cite{BlokPigozzi}). To that end, we will use Lewin, Mikenberg and Schwarze's construction~\cite{Lewin}, and prove (as they did for $C_{1}$) that there are models of them for which the Leibniz operator $\Omega$ which  sends a filter to its largest compatible congruence is not bijective. Because of this, these logics are not algebraizable, by~\cite[Theorem~5.1]{BlokPigozzi}.

\begin{mydef}
Given a signature $\Sigma$ and a $\Sigma$-algebra $\mathcal{A}=(A, \{\sigma_{\mathcal{A}}\}_{\sigma\in\Sigma})$, a congruence in $\mathcal{A}$ is a relation $\theta$ on $A\times A$ such that, if $a_{1}\theta b_{1}$, \ldots  , $a_{n}\theta b_{n}$, then $\sigma_{\mathcal{A}}(a_{1}, \ldots  , a_{n})\theta\sigma_{\mathcal{A}}(b_{1}, \ldots  , b_{n})$.
\end{mydef}

\begin{mydef}
Given a signature $\Sigma$, a logic $\mathcal{L}$ and a $\Sigma$-algebra $\mathcal{A}=(A, \{\sigma_{\mathcal{A}}\}_{\sigma\in\Sigma})$, a $\mathcal{L}$-filter in $\mathcal{A}$ is a subset $F\subseteq A$ such that $\Gamma\vdash_{\mathcal{L}}\varphi$ implies $\Gamma\vDash_{(\mathcal{A}, F)}\varphi$, for every set of formulas $\Gamma\cup\{\varphi\}$ over $\Sigma$.
\end{mydef}

The largest compatible congruence $\theta$ with a filter $F$ is the largest congruence such that, if $a\theta b$ and $a\in F$, then $b\in F$.

So, over the signature $\Sigma_{\bI}$, we consider the $\Sigma_{\bI}$-algebra $\mathfrak{L}$ with universe $L=\{u, 1, a, b, 0\}$  where $\vee$ is the supremum and $\wedge$ is the infimum of the poset structure on $L$ given by $0<a,b<1<u$, such that $a$ and $b$ are incomparable. The other operations are given by the tables below.

\begin{figure}[H]
\centering
\begin{minipage}[t]{5cm}
\centering
\begin{tabular}{l|ccccr}
$\rightarrow$ & $u$ & $1$ & $a$ & $b$ & $0$ \\\hline
$u$ & $u$ & $u$ & $a$ & $b$ & $0$\\
$1$ & $u$ & $1$ & $a$ & $b$ & $0$\\
$a$ & $u$ & $1$ & $1$ & $b$ & $b$\\
$b$ & $u$ & $1$ & $a$ & $1$ & $a$\\
$0$ & $u$ & $1$ & $1$ & $1$ & $1$
\end{tabular}
\caption*{Table for Implication}
\end{minipage}
\begin{minipage}[t]{5cm}
\centering
\begin{tabular}{l|ccccr}
$\inc$ & $u$ & $1$ & $a$ & $b$ & $0$ \\\hline
$u$ & $0$ & $0$ & $0$ & $0$ & $1$\\
$1$ & $0$ & $0$ & $b$ & $a$ & $1$\\
$a$ & $0$ & $b$ & $b$ & $1$ & $1$\\
$b$ & $0$ & $a$ & $1$ & $a$ & $1$\\
$0$ & $1$ & $1$ & $1$ & $1$ & $1$
\end{tabular}
\caption*{Table for incompatibility}
\end{minipage}
\end{figure}

Consider the logical matrix $\mathfrak{M}=(\mathfrak{L}, D)$, with $D=\{u,1\}$. It is immediate to see that $\mathfrak{M}$ models $\bI$, meaning that $\Gamma\vdash_{\bI}\varphi$ implies that $\Gamma\vDash_{\mathfrak{M}}\varphi$.
 
\begin{lema}
$\mathfrak{L}$ has only $\nabla=L\times L$ and $\triangle=\{(x,x) \ : \  x\in L\}$ as congruences.
\end{lema}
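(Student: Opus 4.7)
The plan is to show that any congruence $\theta$ on $\mathfrak{L}$ distinct from $\triangle$ must equal $\nabla$. I would proceed in three stages, each reducing to a single key pair by using compatibility of $\theta$ with the operations and the tables above.

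\emph{Stage 1: reduce to a pair involving $u$.} Pick $(x,y)\in\theta$ with $x\neq y$; if $u\in\{x,y\}$ we are done, so suppose $x,y\in\{1,a,b,0\}$. I would first inspect the tables to note that $u$ never appears as a value of $\inc$, while $u\to z = u$ precisely when $z\in\{u,1\}$. A small case analysis on the six unordered pairs from $\{1,a,b,0\}$ then produces a pair of the form $(u,z')\in\theta$ with $z'\neq u$: for $\{1,a\}, \{1,b\}, \{1,0\}$, apply $u\to(-)$ directly to obtain $(u,a), (u,b), (u,0)$ respectively; for $\{a,b\}$ use $(a\inc a, a\inc b)=(b,1)$ to put $1$ in the relation; for $\{a,0\}$ use $(1\inc a, 1\inc 0)=(b,1)$; for $\{b,0\}$ use $(1\inc b, 1\inc 0)=(a,1)$. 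In each of the last three subcases a further application of $u\to(-)$ to the newly obtained pair $(c,1)$ with $c\in\{a,b\}$ yields $(c,u)\in\theta$.

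\emph{Stage 2: from $(u,z)\in\theta$ with $z\in\{1,a,b,0\}$, derive $(0,1)\in\theta$.} For $z=0$, $(1\inc u, 1\inc 0)=(0,1)$; for $z=a$, $(b\inc u, b\inc a)=(0,1)$; for $z=b$, $(a\inc u, a\inc b)=(0,1)$. The remaining case $z=1$ is the only one that is not one step: applying $a\inc(-)$ and $b\inc(-)$ to $(u,1)$ gives $(0,b)$ and $(0,a)$, whence $(a,b)\in\theta$ by transitivity; then $(1\inc 0, 1\inc a)=(1,b)$ paired with $(0,b)$ forces $(0,1)\in\theta$.

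\emph{Stage 3: from $(0,1)\in\theta$ conclude $\theta=\nabla$.} Compute $(u\to 0, u\to 1)=(0,u)$, $(a\to 0, a\to 1)=(b,1)$ and $(b\to 0, b\to 1)=(a,1)$. Together with $(0,1)$, transitivity gives that each of $u, a, b, 0$ is $\theta$-related to $1$, so $\theta=\nabla$.

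The main obstacle I expect is the $z=1$ subcase of Stage 2: because $u$ and $1$ behave identically under $\inc$ against any fixed second argument, and $u\to 1 = u = u\to u$, no single application of an operation separates $(u,1)$ into a pair of the form $(0,1)$; one must instead cycle through the $a$\textendash$b$ swap that $\inc$ induces between $(u,1)$ and $(u\inc \text{-})$. All the other bookkeeping is routine line-by-line consultation of the two tables above.
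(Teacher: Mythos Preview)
Your argument is correct and supplies exactly the case analysis the paper omits (its proof is the one-line remark ``it is easy to prove that $\theta=\nabla$''). One small inaccuracy in your closing commentary: $u$ and $1$ do \emph{not} behave identically under $\inc$ against every fixed second argument (for instance $u\inc a=0$ while $1\inc a=b$); your conclusion that no single application produces $(0,1)$ from $(u,1)$ is nonetheless correct, since the only nontrivial pairs obtainable in one step are $(0,a)$ and $(0,b)$.
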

\begin{proof}
Let $\theta$ be a congruence in $\mathfrak{L}$ such that $(x,y)$ is in $\theta$, with $x\neq y$ (that is, $\theta\neq \triangle$). It is easy to prove that $\theta=\nabla$.
\end{proof}

\begin{lema}\label{filter}
Let  $\mathcal{A}$ be a $\Sigma$-algebra and let  $\mathcal{L}$ be a logic over $\Sigma$. Then, $F$ is a $\mathcal{L}$-filter in $\mathcal{A}$ iff, for every $\Sigma$-homomorphism $\sigma:\mathbf{F}(\Sigma, \mathcal{V})\rightarrow\mathcal{A}$ the following holds:
\begin{enumerate}
\item for any instance of axiom $\psi$ of $\mathcal{L}$, $\sigma(\psi)\in F$;
\item for any instance of an  inference rule  $\psi_{1}, \ldots  , \psi_{n}/\psi$ of $\mathcal{L}$, if\\ $\sigma(\psi_{1}), \ldots  , \sigma(\psi_{n})\in F$ then $\sigma(\psi)\in F$.
\end{enumerate}
\end{lema}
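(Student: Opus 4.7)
The plan is to prove both directions by unwinding the definitions and, in the less routine direction, doing an induction on derivation length.

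For the ($\Rightarrow$) direction, I would assume that $F$ is an $\mathcal{L}$-filter. If $\psi$ is an instance of an axiom of $\mathcal{L}$, then $\vdash_{\mathcal{L}} \psi$, so by the filter condition $\vDash_{(\mathcal{A}, F)} \psi$, which unpacks precisely to $\sigma(\psi) \in F$ for every $\Sigma$-homomorphism $\sigma$, yielding condition~1. Similarly, for an instance of an inference rule $\psi_1, \ldots, \psi_n/\psi$, we have $\psi_1, \ldots, \psi_n \vdash_{\mathcal{L}} \psi$, hence $\psi_1, \ldots, \psi_n \vDash_{(\mathcal{A}, F)} \psi$, which directly gives condition~2.

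For the ($\Leftarrow$) direction, assume $F$ satisfies conditions~1 and~2, and let $\Gamma \cup \{\varphi\}$ be a set of formulas over $\Sigma$ with $\Gamma \vdash_{\mathcal{L}} \varphi$. Fix a $\Sigma$-homomorphism $\sigma: \mathbf{F}(\Sigma, \mathcal{V}) \to \mathcal{A}$ with $\sigma(\gamma) \in F$ for every $\gamma \in \Gamma$; we must prove $\sigma(\varphi) \in F$. I would proceed by induction on the length $n$ of a derivation $\alpha_1, \ldots, \alpha_n = \varphi$ of $\varphi$ from $\Gamma$ in $\mathcal{L}$, showing that $\sigma(\alpha_i) \in F$ for every $i$. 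At each step, $\alpha_i$ is either a hypothesis in $\Gamma$ (so $\sigma(\alpha_i) \in F$ by assumption on $\sigma$), an instance of an axiom (so $\sigma(\alpha_i) \in F$ by condition~1), or the conclusion of an application of an inference rule $\psi_1, \ldots, \psi_k/\psi$ to formulas $\alpha_{j_1}, \ldots, \alpha_{j_k}$ appearing earlier in the derivation; in this latter case, the induction hypothesis yields $\sigma(\alpha_{j_l}) \in F$ for $l = 1, \ldots, k$, and condition~2 gives $\sigma(\alpha_i) \in F$.

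There is no real obstacle here: the statement is essentially a translation between the semantic consequence formulation of $\mathcal{L}$-filters and the syntactic formulation of Hilbert calculi. The only mildly delicate point is making sure that when an inference rule is applied, the substitution instance of the schematic rule used in the derivation corresponds to the image under some homomorphism of the schematic premises and conclusion; this is handled by composing $\sigma$ with the substitution that turned the schematic rule into its instance, which is itself a $\Sigma$-homomorphism from $\mathbf{F}(\Sigma, \mathcal{V})$ to $\mathcal{A}$, so condition~2 applies directly.
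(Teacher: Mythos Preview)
Your argument is correct and is the standard one; the paper in fact states this lemma without proof, treating it as routine. One small remark: since the lemma is phrased for \emph{instances} of axioms and rules (not for the schemas themselves), the ``mildly delicate point'' you raise in the last paragraph does not actually arise---the formulas $\alpha_{j_1},\ldots,\alpha_{j_k},\alpha_i$ appearing in the derivation already constitute an instance of a rule, so condition~2 applies directly with $\sigma$ itself, no composition with a substitution needed.
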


Take the subsets $F_{a}=\{u, 1, a\}$ and $F_{b}=\{u, 1, b\}$ of $L$. By using Lemma~\ref{filter} we can prove  that both sets are $\bI$-filters. First of all, for any $\Sigma_{\bI}$-homomorphism $\sigma:\mathbf{F}(\Sigma_{\bI}, \mathcal{V})\rightarrow\mathfrak{L}$ and any instance of an axiom schema $\psi$ of $\bI$, since $\mathfrak{M}=(\mathfrak{L}, D)$ models $\bI$, $\sigma(\psi)\in D=\{u,1\}\subseteq F_{a}$ and $\sigma(\psi)\in D \subseteq F_{b}$, implying both $F_{a}$ and $F_{b}$ satisfy the first condition for being a $\bI$-filter. And there is only one rule of inference, \MP: one sees that, if $x\rightarrow y$ is in $F_{z}$ (for $z\in\{a,b\}$), then either $y\in F_{z}$ or $x\in L\setminus F_{z}$. So, if both $x$ and $x\rightarrow y$ are in $F_{z}$, then $y\in F_{z}$, what implies that both $F_{a}$ and $F_{b}$ are $\bI$-filters.

But $\triangle$ is the largest congruence compatible with $F_{a}$, and it is also the largest congruence compatible $F_{b}$. Indeed, $\nabla$ is not compatible with neither $F_{a}$ nor $F_{b}$, since $u\nabla 0$ and $u\in F_{a}\cap F_{b}$, but $0$ is not in $F_{a}$ nor in $F_{b}$. Clearly $\triangle$ is compatible with both $F_{a}$ and $F_{b}$, and since there are no congruences larger than $\triangle$ different from $\nabla$, we obtain the  result. Given that the Leibniz operator $\Omega_{\mathfrak{L}}$ which assigns to any filter its  largest compatible congruence is not injective, it follows by~\cite[Theorem~5.1]{BlokPigozzi} that $\bI$ is not algebraizable in the sense of Blok-Pigozzi.

To extend this result to $\nbI$ and the other logics, we consider the $\Sigma_{\nbI}$-algebra $\mathfrak{L}_{+}$ with the same universe as $\mathfrak{L}$, the same binary operations, and the negation defined as $\neg u=\neg 0=1$, $\neg 1=0$, $\neg a=b$ and $\neg b=a$. A simple calculation shows that $\mathfrak{M}_{+}=(\mathfrak{L}_{+}, D)$ models $\nbI$ and any $\mathcal{L}\in\{\nbIciw, \nbIci, \nbIcl\}$. Once again, the only congruences of $\mathfrak{L}_{+}$ are $\triangle$ and $\nabla$. It can be proven that $F_{a}$ and $F_{b}$ are $\nbI$ and $\mathcal{L}$-filters, from Lemma \ref{filter}, and $\triangle$ is the largest congruence compatible with both $F_{a}$ and $F_b$. Hence the Leibniz operator $\Omega_{\mathfrak{L}_{+}}$ is not injective, showing that neither  $\nbI$ nor $\mathcal{L}\in\{\nbIciw, \nbIci, \nbIcl\}$ are algebraizable.

\subsection{\bI\ and \nbI\ are  not characterizable by finite Nmatrices}

One might ask why we resort to RNmatrices for $\bI$, without first making use of Nmatrices. The reason is simply that no finite Nmatrix can characterize $\bI$, as it will be proved now. Suppose there is an Nmatrix $\mathcal{M}=(\mathcal{A}, D)$ that characterizes $\bI$, with $A$ the universe of $\mathcal{A}$ and $U=A\setminus D$ the set of undesignated elements. For simplicity, we will drop the indexes from the operations on $\mathcal{A}$ and use the infix notation.


\begin{lema} \label{propNmat} \ \\
1. Let $a,b \in A$. Then, either $a \to b \subseteq D$ or $a \to b \subseteq U$. Moreover, $a \to b \subseteq U$ iff $a \in D$ and $b \in U$.\\
2. For any formula $\alpha$ and valuation $\nu$, $\nu(\alpha)\in D$ iff $\nu({\sim}\alpha)\in U$.
\end{lema}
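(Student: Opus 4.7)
My plan is to prove Part 1 by a three-case analysis on whether $a$ and $b$ lie in $D$ or $U$, in each case exploiting soundness of a carefully chosen derivable sequent of $\bI$ (available since $\mathcal{M}$ characterizes $\bI$). The core observation is that a valuation over a general Nmatrix may freely pick $\nu(\alpha\to\beta)$ as \emph{any} element of the multioperation $a\to b$, so that a sound derivation which concludes $\nu(\alpha\to\beta)\in D$ in fact forces every element of $a\to b$ into $D$. Part 2 will then follow as a corollary of Part 1 applied to the definition $\sneg\alpha = \alpha\to\bot_{\alpha\alpha}$, combined with the observation that $\nu(\bot_{\alpha\alpha})\in U$ for any non-trivializing valuation.

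Concretely, Case (i) of Part 1 ($b\in D$) follows from soundness of $\beta\vdash_{\bI}\alpha\to\beta$ (\textbf{Ax 1} + \MP), and Case (ii) ($a\in D$, $b\in U$) from soundness of \MP\ applied contrapositively: a designated $c\in a\to b$ together with $\nu(\alpha)=a\in D$ would force $b\in D$, contradicting $b\in U$. For the delicate Case (iii) ($a\in U$) I use the derivable sequent $\sneg\alpha\vdash_{\bI}\alpha\to\beta$, obtained from the classical explosion $\sneg\alpha,\alpha\vdash_{\bI}\beta$ via the deduction theorem (both available in \bI); this reduces the goal to showing $\nu(\sneg\alpha)\in D$ whenever $\nu(\alpha)\in U$, which is precisely the contrapositive of Part 2.

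For Part 2 I would first prove $\wedge$-determinism ($a\wedge b\subseteq D$ iff $a,b\in D$, otherwise $\subseteq U$) as an immediate consequence of \textbf{Ax 3}--\textbf{Ax 5}, and then combine axiom \textbf{Ip} (instantiated with $\alpha=\beta=p$) with non-triviality of $\mathcal{M}$ (guaranteed by completeness, since e.g.\ $\not\vdash_{\bI}p$ and hence some valuations send formulas outside $D$) to conclude that $a\inc a\subseteq U$ whenever $a\in D$: otherwise a designated $c\in a\inc a$ combined with \textbf{Ip} would push every formula into $D$, trivializing the valuation. Together with $\wedge$-determinism this forces $\nu(\bot_{\alpha\alpha})\in U$ in every non-trivializing valuation (regardless of whether $a\in D$ or $a\in U$), and then Part 1 applied to $\nu(\sneg\alpha)\in a\to\nu(\bot_{\alpha\alpha})$ yields Part 2 in both directions.

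The hard part, I expect, will be untangling the joint dependency between Case (iii) of Part 1 and the contrapositive direction of Part 2, which together amount to the assertion that ``$\sneg$ realises the Boolean complement of $\{D,U\}$.'' I plan to close this loop by a single coordinated argument handling the two claims simultaneously, leveraging $\vdash_{\bI}\alpha\vee\sneg\alpha$ together with the half of $\vee$-determinism supplied by \textbf{Ax 6}--\textbf{Ax 7}, the Cases (i) and (ii) for $\to$ already in hand, and the non-triviality argument above to rule out degenerate (trivializing) valuations once and for all.
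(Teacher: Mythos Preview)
Your plan is far more explicit than the paper's own proof, which is a single line: ``Immediate, given that the Nmatrix $\M$ is a model of \cpl.'' That is, the paper simply appeals to the (standard) fact that in any Nmatrix sound for classical propositional logic the connectives $\wedge,\vee,\to$ are determined at the $D/U$ level, and does not spell out any case analysis at all. So there is no real comparison of routes to make; you are attempting to supply the folklore argument the paper skips.

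Your treatment of Cases~(i) and~(ii), of $\wedge$-determinism, and of $\nu(\bot_{\alpha\alpha})\in U$ is correct, and you rightly isolate the joint dependency between Case~(iii) and the contrapositive of Part~2 as the only genuine difficulty. The gap is in the toolkit you name for closing that loop. Axioms \textbf{Ax~6}--\textbf{Ax~7} supply only the \emph{introduction} half of $\vee$-determinism, namely ``$a\in D$ or $b\in D$ implies $a\vee b\subseteq D$''. But to pass from $\vdash_{\bI}\alpha\vee\sneg\alpha$ (hence $\nu(\alpha\vee\sneg\alpha)\in D$) and $\nu(\alpha)\in U$ to $\nu(\sneg\alpha)\in D$, you need the \emph{elimination} half, ``$a,b\in U$ implies $a\vee b\subseteq U$'', which is governed by \textbf{Ax~8}, not \textbf{Ax~6}--\textbf{Ax~7}. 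Moreover, the obvious instantiation of \textbf{Ax~8} (via $\alpha\vee\beta,\ \alpha\to\gamma,\ \beta\to\gamma\vdash\gamma$) reintroduces implications with undesignated antecedents, so Case~(iii) resurfaces and the circle is not broken by this substitution alone. One way that does work is to bypass $\vee$ and $\sneg$ entirely and argue directly with Peirce's law $\vdash_{\cplp}((\alpha\to\beta)\to\alpha)\to\alpha$ together with $a\to a\subseteq D$ and Cases~(i)--(ii), extracting enough constraints on $c\to a$ and $a\to d$ (for $c\in a\to b$, $d\in c\to a$) to force a contradiction from $c\in U$; but this bootstrapping is more delicate than your outline suggests, and the paper sidesteps it by invoking the known result.
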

\begin{proof}
Immediate, given that the Nmatrix \M\ is a model of \cpl.
\end{proof}

\begin{lema} \label{propNmat2}
For any two elements $a, b\in A$, either $a\inc b\subseteq D$ or $a\inc b\subseteq U$;
\end{lema}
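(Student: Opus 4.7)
My plan is to exploit the axiom \textbf{Comm} together with the assumption that $\mathcal{M}$ characterizes $\bI$. The key derived rule is $p \inc q \vdash_{\bI} q \inc p$, obtained from \textbf{Comm} by a single application of \MP. By soundness of $\mathcal{M}$ for $\bI$, this translates into: for every valuation $\nu$ of $\mathcal{M}$, if $\nu(p \inc q) \in D$ then $\nu(q \inc p) \in D$.

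Now fix $a, b \in A$ and suppose $\inc_{\mathcal{A}}(a,b) \cap D \neq \emptyset$, say $c \in \inc_{\mathcal{A}}(a,b) \cap D$. Pick any $c' \in \inc_{\mathcal{A}}(b,a)$. Since $\mathcal{M}$ is an Nmatrix, we may freely build a valuation $\nu$ with $\nu(p) = a$, $\nu(q) = b$, $\nu(p \inc q) = c$, and $\nu(q \inc p) = c'$ (extending arbitrarily to other atoms), because each chosen value lies in the appropriate non-deterministic set. From $\nu(p \inc q) = c \in D$ the semantic consequence above forces $\nu(q \inc p) = c' \in D$. As $c'$ was arbitrary, $\inc_{\mathcal{A}}(b,a) \subseteq D$, and since this set is non-empty, it meets $D$.

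Applying the same argument with the roles of $a$ and $b$ swapped, the fact that $\inc_{\mathcal{A}}(b,a) \cap D \neq \emptyset$ yields $\inc_{\mathcal{A}}(a,b) \subseteq D$. So the only two possibilities are $\inc_{\mathcal{A}}(a,b) \subseteq D$ (when the set meets $D$) or $\inc_{\mathcal{A}}(a,b) \cap D = \emptyset$, i.e. $\inc_{\mathcal{A}}(a,b) \subseteq U$.

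The only delicate point I foresee is the implicit use of the freedom of Nmatrix valuations: to justify setting $\nu(p \inc q) = c$ and $\nu(q \inc p) = c'$ independently, one needs that these are formulas over disjoint top-level structure, so that the homomorphic coherence condition on $\nu$ is satisfied componentwise. Since $p \inc q$ and $q \inc p$ are syntactically distinct terms and their immediate subformulas are only the atoms $p$ and $q$, no compatibility obligation beyond $c \in \inc_{\mathcal{A}}(a,b)$ and $c' \in \inc_{\mathcal{A}}(b,a)$ is imposed, so the construction is legitimate.
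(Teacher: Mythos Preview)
Your proof is correct and follows essentially the same route as the paper's: both exploit \textbf{Comm} together with the freedom to choose Nmatrix valuations independently on the distinct formulas $p\inc q$ and $q\inc p$, deducing from a designated element in $a\inc b$ that $b\inc a\subseteq D$, and then swapping. The only cosmetic differences are that the paper phrases the argument by contradiction and invokes Lemma~\ref{propNmat}(1) on $\to$ to read off the consequence from the axiom form $(p\inc q)\to(q\inc p)$, whereas you use the derived entailment $p\inc q\vdash_{\bI} q\inc p$ directly and argue contrapositively, which lets you skip the appeal to that lemma.
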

\begin{proof}
Suppose that there are values $d, u\in a\inc b$ with $d\in D$ and $u\in U$. Take then a valuation $\nu$ such that, for variables $p$ and $q$, $\nu(p)=a$, $\nu(q)=b$ and $\nu(p\inc q)=d$. But $\nu((p\inc q)\rightarrow(q\inc p)) \in d\rightarrow \nu(q\inc p)$ must be always designated, given that $\mathcal{M}$ models $\textbf{Comm}$. From Lemma~\ref{propNmat}(1), this implies that $b\inc a\subseteq D$. But then, by taking a valuation $\nu^{*}$ for which $\nu^{*}(p)=b$, $\nu^{*}(q)=a$ and $\nu^{*}(q\inc p)=u$, one reaches a contradiction.
\end{proof}

Consider now two disjoint sets of distinct variables $\{p_{n} \ : \ n\in\mathbb{N}\}$ and $\{q_{n} \ : \ n\in\mathbb{N}\}$ and the following formulas, for $i, j\in\mathbb{N}$:
\[\phi_{ij}=\begin{cases}
      p_{i}\inc q_{j} & \text{if $i<j$} \\
      {\sim}(p_{i}\inc q_{j})   & \text{otherwise}
    \end{cases}.\]
Define, for any $n\in\mathbb{N}$, $\Gamma_{n}=\{\phi_{ij} \ : \  0\leq i, j\leq n\}$. It can be proved that, for all $n$, $\Gamma_{n}\not\vdash_{\bI}p_{0}$. To see that, consider the bivaluation $\nu$ for which $\nu(p_{i})=\nu(p_{j})=0$, for all $i, j\in\mathbb{N}$, and $\nu(p_{i}\inc q_{j})$ equals $1$, if $i<j$, and $0$ otherwise. Since $\nu({\sim}\alpha)=1$ iff $\nu(\alpha)=0$, by Lemma~\ref{propNmat}(2), we find that $\nu(\phi_{ij})=1$ for all $0\leq i, j\leq n$. So $\nu(\Gamma_{n})\subseteq \{1\}$ but $\nu(p_{0})=0$. Hence $\Gamma_{n}\not\vDash_{\bI}p_{0}$ and so $\Gamma_{n}\not\vdash_{\bI}p_{0}$, by soundness.

Since \M\ characterizes \bI, the latter implies that, for any $n\in\mathbb{N}$, there exists a valuation $\nu$ for $\mathcal{A}$ such that $\nu(\Gamma_{n})\subseteq D$ but $\nu(p_{0})\in U$. However, assuming that $\mathcal{A}$ has a finite universe with $n \geq 2$ elements, there can not exist a valuation $\nu$ satisfying $\nu(\Gamma_{n})\subseteq D$. Suppose there exists such a valuation $\nu$. Given there are $n+1$ elements among $p_{0}, p_{1}, \ldots  , p_{n}$ and $\mathcal{A}$ has only $n$ elements, by the pigeonhole principle one finds there exist $0\leq i<j\leq n$ such that $\nu(p_{i})= a = \nu(p_{j})$. Since $\nu(\Gamma_{n})\subseteq D$, $\nu(\phi_{ij})=\nu(p_{i}\inc q_{j})\in D$, implying by Lemma~\ref{propNmat2} that $a\inc\nu(q_{j})\subseteq D$. From $\nu(p_{j})=a$ we get that  $\nu(p_{j}\inc q_{j})\in a\inc\nu(q_{j})$, hence  $\nu(p_{j}\inc q_{j})\in D$. Using Lemma~\ref{propNmat}(2), this implies that  $\nu(\phi_{jj})=\nu({\sim}(p_{j}\inc q_{j}))\in U$, contradicting the supposition that $\nu(\Gamma_{n})\subseteq D$. This proves the following:

\begin{teo}
There exists no finite Nmatrix which characterizes $\bI$.
\end{teo}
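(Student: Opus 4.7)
The plan is to argue by contradiction, leveraging the rigidity results just established (Lemmas~\ref{propNmat} and~\ref{propNmat2}): in any Nmatrix modelling \bI, the set $a\to b$ is wholly in $D$ or wholly in $U$ depending only on the designation status of $a$ and $b$, the operator $\sneg$ flips designation, and $a\inc b$ is wholly in $D$ or wholly in $U$. Suppose then, toward contradiction, that some Nmatrix $\M=(\A,D)$ with $|A|=n\geq 2$ characterizes \bI. The goal is to exhibit, for this particular $n$, a set $\Gamma_n$ using strictly more than $n$ distinct variables of one kind, so that $\Gamma_n\not\vdash_{\bI}p_0$ (and hence \M\ must afford a valuation refuting $p_0$ from $\Gamma_n$), and then derive an impossibility from the pigeonhole principle applied to the values taken on those variables.

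To construct $\Gamma_n$, I would pick two disjoint infinite families of fresh variables $\{p_i\}$ and $\{q_j\}$ and define $\phi_{ij}=p_i\inc q_j$ when $i<j$ and $\phi_{ij}=\sneg(p_i\inc q_j)$ otherwise; put $\Gamma_n=\{\phi_{ij}\,:\,0\le i,j\le n\}$. To see that $\Gamma_n\not\vdash_{\bI}p_0$ I would exhibit a bivaluation $\nu$ sending every $p_k$ and every $q_k$ to $0$ and choosing $\nu(p_i\inc q_j)=\nu(q_j\inc p_i)=1$ exactly when $i<j$ (extending $\nu$ by $0$ on every other incompatibility formula, preserving clause~5 trivially). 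Clause~4 of the \bI-bivaluation definition is vacuously satisfied because every $\inc$-formula forced to be $1$ has both arguments evaluating to $0$; and a direct check gives $\nu(\phi_{ij})=1$ for all $0\le i,j\le n$ while $\nu(p_0)=0$, hence $\Gamma_n\nvDash_{\bI}p_0$, and thus $\Gamma_n\nvdash_{\bI}p_0$ by soundness.

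By the assumption that \M\ characterizes \bI, there exists a valuation $\nu_n$ over \A\ with $\nu_n(\Gamma_n)\subseteq D$ and $\nu_n(p_0)\in U$. Specializing to $n=|A|$, the pigeonhole principle applied to the $n+1$ values $\nu_n(p_0),\dots,\nu_n(p_n)$ yields indices $0\le i<j\le n$ with $\nu_n(p_i)=\nu_n(p_j)=:a$. Since $\phi_{ij}=p_i\inc q_j\in\Gamma_n$ is designated, Lemma~\ref{propNmat2} forces $a\inc \nu_n(q_j)\subseteq D$, so $\nu_n(p_j\inc q_j)\in D$ as well; then Lemma~\ref{propNmat}(2) yields $\nu_n(\sneg(p_j\inc q_j))\in U$. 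But $\phi_{jj}=\sneg(p_j\inc q_j)\in\Gamma_n$, contradicting $\nu_n(\Gamma_n)\subseteq D$.

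The only genuinely design-dependent step, which I expect to be the main obstacle, is the asymmetric recipe for $\phi_{ij}$ together with the use of two \emph{disjoint} variable families. This design is what simultaneously (i) keeps the bivaluation witness of non-derivability vacuous on clause~4, and (ii) guarantees that any pigeonhole collision on the $p$-side produces a diagonal $\phi_{jj}$ whose designation status is flipped by that of the off-diagonal $\phi_{ij}$ via the uniform behaviour of $\inc$ forced by Lemma~\ref{propNmat2}. Everything else is essentially bookkeeping with the structural lemmas already available.
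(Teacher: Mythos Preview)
Your proposal is correct and follows essentially the same route as the paper: the identical construction of $\phi_{ij}$ and $\Gamma_n$ over two disjoint variable families, the same bivaluation witness (all $p_i,q_j\mapsto 0$, with $p_i\inc q_j$ true iff $i<j$) for non-derivability, and the same pigeonhole collision forcing $\nu(p_j\inc q_j)\in D$ via Lemma~\ref{propNmat2} and hence $\nu(\phi_{jj})\in U$ via Lemma~\ref{propNmat}(2). If anything you are slightly more careful than the paper in explicitly noting that clause~5 of the bivaluation definition must be respected when specifying $\nu$ on $\inc$-formulas.
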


The latter result also implies that $\nbI$ is not characterizable by finite Nmatrices. Indeed,  if it were characterizable by some $\mathcal{M}=(\mathcal{A}, D)$, for $\mathcal{A}$ a $\Sigma_{\nbI}$-multialgebra with universe $A$, by defining $\mathcal{A}_{-}=(A, \{\sigma_{\mathcal{A}}\}_{\sigma\in\Sigma_{\bI}})$ one would find that $\mathcal{M}_{-}=(\mathcal{A}_{-}, D)$ characterizes $\bI$, contradicting our previous theorem.

\subsection{$\bI$ and \nbI\ are not characterizable by a finite Rmatrix}

RNmatrices combine aspects of both Nmatrices and Piochi's restricted matrices, or Rmatrices (see~\cite{Piochi,Piochi2}). A Rmatrix is a triple $\mathcal{M}=(\mathcal{A}, D, \mathcal{F})$ such that $\mathcal{A}$ is a $\Sigma$-algebra, $D$ is a non-empty subset of its universe and $\mathcal{F}$ is a set of homomorphisms $\nu:\mathbf{F}(\Sigma, \mathcal{V})\rightarrow\mathcal{A}$. Given formulas $\Gamma\cup\{\varphi\}$ over $\Sigma$, we say $\varphi$ is a consequence of $\Gamma$ according to $\mathcal{M}$, and write $\Gamma\vDash_{\mathcal{M}}\varphi$, if for every $\nu\in\mathcal{F}$, $\nu(\Gamma)\subseteq D$ implies that $\nu(\varphi)\in D$.

We have already proved that $\bI$ is characterizable, as every Tarskian logic is, by finite RNmatrices, but not by finite Nmatrices, so it is natural to ask whether it is characterizable by finite Rmatrices alone. The answer is negative, as we shall see. Indeed, assume that $\mathcal{M}=(\mathcal{A}, D, \mathcal{F})$ is a finite Rmatrix which characterizes $\bI$, and consider again two disjoint sets of distinct variables $\{p_{n} \ : \ n\in\mathbb{N}\}$ and $\{q_{n} \ : \ n\in\mathbb{N}\}$ and the formulas $\phi_{ij}$ and their sets $\Gamma_{n}$, for $i, j, n\in\mathbb{N}$, of the previous subsection. 

\begin{lema} \label{lemRmat}
For $\nu\in\mathcal{F}$, $\nu(\alpha)$ and $\nu({\sim}\alpha)$ can not both belong to $D$.
\end{lema}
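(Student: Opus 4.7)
The plan is to argue by contradiction, leaning on the explosive behaviour of the classical negation \sneg\ already established in Section~\ref{second}: for every formulas $\alpha$ and $\beta$, one has $\alpha,\sneg\alpha\vdash_{\bI}\beta$.

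Suppose, toward a contradiction, that some $\nu\in\mathcal{F}$ and some formula $\alpha$ satisfy $\nu(\alpha)\in D$ and $\nu(\sneg\alpha)\in D$. Since $\mathcal{M}=(\mathcal{A},D,\mathcal{F})$ characterizes \bI\ and is in particular sound for it, applying soundness to the explosion principle above yields $\nu(\beta)\in D$ for every formula $\beta$. Thus $\nu$ would send the entire formula algebra into $D$; in other words, $\nu$ would be a \emph{trivializing} valuation.

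To close the argument, I would observe that a trivializing valuation contributes nothing to the consequence relation $\vDash_{\mathcal{M}}$: any constraint of the form ``$\nu[\Gamma]\subseteq D$ implies $\nu(\varphi)\in D$'' is automatically satisfied by such a $\nu$. Hence, removing all trivializing valuations from $\mathcal{F}$ leaves the consequence relation unchanged, and the resulting pruned triple is still a finite Rmatrix characterizing \bI. One may therefore assume, without loss of generality, that $\mathcal{F}$ contains no trivializing valuation, which contradicts the existence of the $\nu$ exhibited above. The main obstacle I expect is precisely this last WLOG step: one must verify that pruning preserves the Rmatrix format (finiteness and characterization of \bI) and, if one wants to retain structurality, that the class of trivializing valuations is itself closed under composition with substitutions --- which follows easily, since $\nu\circ\lambda$ inherits trivial-ness from $\nu$ for every substitution $\lambda$. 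Once this housekeeping is dispatched, the content of the lemma reduces to a single appeal to soundness together with the explosion law for \sneg.
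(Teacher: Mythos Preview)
Your argument is correct and shares the same core with the paper's: both invoke the explosion law $\alpha,\sneg\alpha\vdash_{\bI}\beta$ together with soundness of $\mathcal{M}$ for \bI. The paper is terser: it takes $\beta=\bot_{\alpha\alpha}$ and asserts directly that $\nu(\bot_{\alpha\alpha})\notin D$ from $\not\vdash_{\bI}\bot_{\alpha\alpha}$, then reads off the contradiction. Strictly speaking that inference is not valid for an \emph{arbitrary} $\nu\in\mathcal{F}$ --- a trivializing valuation sending every formula into $D$ could sit in $\mathcal{F}$ without disturbing the characterization of \bI\ --- so your explicit pruning step is in fact the cleaner way to close the argument, and it makes the lemma hold for the (possibly smaller) $\mathcal{F}$ that is then used in the surrounding proof. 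Two minor points: your aside on structurality is unnecessary, since Piochi-style Rmatrices impose no such condition on $\mathcal{F}$; and ``finite'' here refers only to the carrier of $\mathcal{A}$, which pruning leaves untouched, so that part of the housekeeping is immediate.
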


\begin{proof}
Since ``$\sim$'' behaves classically, it satisfies $\alpha, {\sim}\alpha\vdash_{\bI}\beta$. Since $\not\vdash_{\bI}\bot_{\alpha\alpha}$, we must have that $\nu(\bot_{\alpha\alpha})\notin D$. Then, if $\nu(\alpha), \nu({\sim}\alpha)\in D$, $\nu$ would not validate the deduction $\alpha, {\sim}\alpha\vdash_{\bI}\bot_{\alpha\alpha}$, what is absurd.
\end{proof}

As we know, $\Gamma_{n}\not\vdash_{\bI}p_{0}$ for any $n\in\mathbb{N}$, which means (assuming that \M\ characterizes \bI) that there must exist a valuation $\nu\in\mathcal{F}$ for which $\nu(\Gamma_{n})\subseteq D$ but $\nu(\varphi)\not\in D$. If the universe of $\mathcal{A}$ has cardinality $n$, by the pigeonhole principle there must exist two elements $p_{i}$ and $p_{j}$ among $\{p_{n} \ : \  n\in\mathbb{N}\}$ such that $\nu(p_{i})=\nu(p_{j})$, which lead us to the following problem. Assume, without loss of generality, that $i<j$. Then, 
\[\nu(p_{i}\inc q_{j})=\nu(p_{i})\inc\nu(q_{j})=\nu(p_{j})\inc\nu(q_{j})=\nu(p_{j}\inc q_{j}).\]
Since $\nu(\Gamma_{n})\subseteq D$, $\nu(p_{i}\inc q_{j}), \nu({\sim}(p_{j}\inc q_{j}))\in D$, which means that $\nu$ is a valuation in $\mathcal{F}$ satisfying that $\nu(p_{j}\inc q_{j})$ and $\nu({\sim}(p_{j}\inc q_{j}))$ are both in $D$, a contradiction given Lemma~\ref{lemRmat}.

The conclusion is that no finite Rmatrix can characterize $\bI$. A similar argument applies to $\nbI$, as we have done at the end of the previous subsection.


\section{Translating Paraconsistent Logics}\label{fifth}

When looking at the RNmatrices introduced in the previous sections, the most important distinction was the replacement of consistency (from \lfis) for incompatibility. In this section it will be proved that there are sublogics of $\nbI$ and its extensions that capture precisely the expressiveness of $\mbC$ and its extensions. This will allow us to use the techniques for logics of incompatibility to deal with \lfis. But, more importantly, translating faithfully the latter systems into the former guarantees that the logics of incompatibility are extending, non-trivially, an important family of \lfis, showing that they deserve to be studied with greater care.

For $\mathcal{V}$ a countable set of propositional variables, we consider the (translating) function $T:\mathbf{F}(\Sigma_{\textbf{LFI}}, \mathcal{V})\rightarrow \mathbf{F}(\Sigma_{\nbI}, \mathcal{V})$ such that:

\begin{enumerate}
\item $T(p)=p$ for every $p\in \mathcal{V}$;
\item $T(\neg\alpha)=\neg T(\alpha)$;
\item $T(\alpha\#\beta)=T(\alpha)\# T(\beta)$, for any $\#\in\{\vee, \wedge, \rightarrow\}$;
\item $T(\circ\alpha)=T(\alpha)\inc\neg T(\alpha)$.
\end{enumerate}

Essentially, $T$ changes all occurrences of formulas of the form $\circ\alpha$ to $\alpha\inc\neg\alpha$.  Note that an instance  of $\textbf{bc1}$ is  translated into an instance of $\textbf{Ip}$.


\begin{prop}
$T$ is an injective function.
\end{prop}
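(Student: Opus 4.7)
The plan is to prove injectivity by structural induction on formulas $\alpha \in \mathbf{F}(\Sigma_{\textbf{LFI}}, \mathcal{V})$, showing that for every $\beta \in \mathbf{F}(\Sigma_{\textbf{LFI}}, \mathcal{V})$, the equality $T(\alpha) = T(\beta)$ forces $\alpha = \beta$. The central observation, which I would establish first, is that the principal (outermost) connective of $T(\alpha)$ as a formula in $\Sigma_{\nbI}$ uniquely determines the principal connective of $\alpha$ in $\Sigma_{\textbf{LFI}}$. This is because $T$ preserves variables, leaves $\vee,\wedge,\rightarrow$ and $\neg$ as the outermost symbol, and only modifies $\circ$, sending $\circ\gamma$ to the $\inc$-headed formula $T(\gamma)\inc\neg T(\gamma)$. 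Since $\inc\notin\Sigma_{\textbf{LFI}}$, no $\sigma\neq\circ$ can produce an image whose principal connective is $\inc$.

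With this observation in hand, the base case is immediate: if $\alpha=p\in\mathcal{V}$ then $T(\beta)=p$ is atomic, so $\beta$ is an atom, and by comparing symbols $\beta=p=\alpha$. For the inductive step on $\alpha=\alpha_1\#\alpha_2$ with $\#\in\{\vee,\wedge,\rightarrow\}$, the image $T(\alpha_1)\# T(\alpha_2)$ has principal connective $\#$, hence $\beta$ must share that principal connective, i.e.\ $\beta=\beta_1\#\beta_2$ with $T(\beta_i)=T(\alpha_i)$; the induction hypothesis yields $\beta_i=\alpha_i$ and thus $\beta=\alpha$. The case $\alpha=\neg\gamma$ is treated analogously using that $T(\alpha)=\neg T(\gamma)$ is $\neg$-headed.

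The only case that actually requires the initial observation is $\alpha=\circ\gamma$: here $T(\alpha)=T(\gamma)\inc\neg T(\gamma)$ is $\inc$-headed, and by the observation $\beta$ must itself be of the form $\circ\delta$; unwinding gives $T(\delta)=T(\gamma)$ (the second conjunct $\neg T(\delta)=\neg T(\gamma)$ is redundant), and the induction hypothesis closes the case.

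No genuine obstacle is expected: the argument is pure bookkeeping, and the only point that demands care is the outermost-connective observation, which must be stated in terms of $\Sigma_{\nbI}$-formulas being uniquely parsable (so that the principal connective of $T(\alpha)$ is well-defined). This holds because formulas in $\mathbf{F}(\Sigma_{\nbI},\mathcal{V})$ admit unique readability, as is standard for absolutely free term algebras.
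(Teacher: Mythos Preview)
Your proposal is correct and follows essentially the same idea as the paper, which merely indicates a double induction on the complexity of $\alpha$ and $\beta$ without further detail. Your single structural induction on $\alpha$, combined with the preliminary observation that the outermost connective of $T(\alpha)$ determines that of $\alpha$, is simply a cleaner packaging of the same argument: the principal-connective lemma replaces what would otherwise be the inner induction on $\beta$.
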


\begin{proof}
By double induction on the complexity of $\alpha$ and $\beta$, it is proved that $T(\alpha)=T(\beta)$ implies that $\alpha=\beta$.\qedhere
\end{proof}

Is important to notice that $\alpha$ in $\mathbf{F}(\Sigma_{\nbI},\mathcal{V})$ does not belong to the set $T(\mathbf{F}(\Sigma_{\textbf{LFI}},\mathcal{V}))$ iff $\alpha$ contains a subformula $\beta_{1}\inc\beta_{2}$ such that $\beta_{2}\neq\neg\beta_{1}$. One direction is clear: if $\alpha$ contains a subformula $\beta_{1}\inc\beta_{2}$ with $\beta_{2}\neq\neg \beta_{1}$ then it is not in $T(\mathbf{F}(\Sigma_{\textbf{LFI}},\mathcal{V}))$. The converse is obtained from an induction over the complexity of $\alpha$.


\subsection{$T$ is a Conservative Translation}

A function $\mathcal{T}: \mathbf{F}(\Sigma^{1}, \mathcal{V})\rightarrow \mathbf{F}(\Sigma^{2}, \mathcal{V})$ is said to be a {\em translation} between the logics $\mathscr{L}_{1}$ over the signature $\Sigma^{1}$ and $\mathscr{L}_{2}$ over $\Sigma^{2}$, if $\Gamma\vdash_{\mathscr{L}_{1}}\varphi$ implies that $\mathcal{T}(\Gamma)\vdash_{\mathscr{L}_{2}}\mathcal{T}(\varphi)$. We say that $\mathcal{T}$ is a {\em conservative translation} if it is a translation satisfying additionally that, if $\mathcal{T}(\Gamma)\vdash_{\mathscr{L}_{2}}\mathcal{T}(\varphi)$, then $\Gamma\vdash_{\mathscr{L}_{1}}\varphi$. This definition corresponds to Definition $2.4.1$ in~\cite{ParLog} and may also be found in the original work on conservative translations, \cite{Itala}.

\begin{lema} \label{lemax}
For any endomorphism $\lambda$ of $\mathbf{F}(\Sigma_{\textbf{LFI}}, \mathcal{V})$, there exists an endomorphism $\overline{\lambda}$ of $\mathbf{F}(\Sigma_{\nbI}, \mathcal{V})$ such that $T\circ\lambda=\overline{\lambda}\circ T$.
\end{lema}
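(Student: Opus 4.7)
The plan is to define $\overline{\lambda}$ by its action on propositional variables (using the universal property that endomorphisms of the absolutely free formula algebra are determined by their values on $\mathcal{V}$) and then verify the commutativity equation $T\circ\lambda=\overline{\lambda}\circ T$ by structural induction on formulas of $\mathbf{F}(\Sigma_{\textbf{LFI}}, \mathcal{V})$.

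Concretely, I would set $\overline{\lambda}(p) := T(\lambda(p))$ for every $p \in \mathcal{V}$ and extend $\overline{\lambda}$ to the unique $\Sigma_{\nbI}$-homomorphism from $\mathbf{F}(\Sigma_{\nbI}, \mathcal{V})$ to itself it determines; this yields an endomorphism by construction. The base case of the induction is then immediate: $T(\lambda(p)) = \overline{\lambda}(p) = \overline{\lambda}(T(p))$.

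For the inductive steps, there are three families of connectives to check. For any binary connective $\# \in \{\vee, \wedge, \rightarrow\}$, using that both $T$ and $\overline{\lambda}$ commute with $\#$ (the former by clause~3 in the definition of $T$, the latter because it is a homomorphism), we compute
\[
T(\lambda(\alpha\#\beta)) = T(\lambda(\alpha))\#T(\lambda(\beta)) = \overline{\lambda}(T(\alpha))\#\overline{\lambda}(T(\beta)) = \overline{\lambda}(T(\alpha\#\beta)),
\]
invoking the induction hypothesis in the middle step. The case $\alpha = \neg\beta$ is analogous, using clause~2 of the definition of $T$ and the fact that $\overline{\lambda}$ commutes with $\neg$.

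The only genuinely non-routine case — and the one I would expect to be the mild obstacle — is $\alpha = \circ\beta$, because $\circ$ is in $\Sigma_{\textbf{LFI}}$ but not in $\Sigma_{\nbI}$, so one must pass explicitly through the defining equation $T(\circ\beta)=T(\beta)\inc\neg T(\beta)$. Here one writes
\[
T(\lambda(\circ\beta)) = T(\circ\lambda(\beta)) = T(\lambda(\beta))\inc\neg T(\lambda(\beta))
\]
and then, using the induction hypothesis $T(\lambda(\beta)) = \overline{\lambda}(T(\beta))$ together with the facts that $\overline{\lambda}$ commutes with $\inc$ and with $\neg$ (being a $\Sigma_{\nbI}$-homomorphism), obtains
\[
T(\lambda(\circ\beta)) = \overline{\lambda}(T(\beta))\inc\neg\overline{\lambda}(T(\beta)) = \overline{\lambda}\bigl(T(\beta)\inc\neg T(\beta)\bigr) = \overline{\lambda}(T(\circ\beta)),
\]
closing the induction and yielding the desired endomorphism $\overline{\lambda}$.
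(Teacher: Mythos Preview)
Your proposal is correct and takes exactly the same approach as the paper: the paper's proof consists of the single sentence ``Define, for every propositional variable $p$, $\overline{\lambda}(p)=T(\lambda(p))$,'' leaving the inductive verification implicit, whereas you have spelled out that verification in full (including the one non-routine case $\circ\beta$).
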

\begin{proof}
Define, for every propositional variable $p$, $\overline{\lambda}(p)=T(\lambda(p))$.
\end{proof}

\begin{teo} \label{T-translat}
Let $\mathcal{L}$ be a logic over $\Sigma_{\textbf{LFI}}$, with axiom schemata $\Psi$ and \MP\ as the only inference rule. Let $\mathcal{L}^{*}$ be the logic over $\Sigma_{\nbI}$ with axiom schemata $T(\Psi)$  and \MP\ as the only inference rule. Then, $\Gamma\vdash_{\mathcal{L}} \varphi$ implies that $T(\Gamma)\vdash_{\mathcal{L}^{*}} T(\varphi)$.
\end{teo}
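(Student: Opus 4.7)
The plan is to proceed by induction on the length $n$ of a derivation $\alpha_1, \ldots, \alpha_n = \varphi$ of $\varphi$ from $\Gamma$ in $\mathcal{L}$, showing at each step that $T(\Gamma) \vdash_{\mathcal{L}^*} T(\alpha_i)$. Since \MP\ is the unique inference rule, there are only three cases to handle at step $i$: $\alpha_i \in \Gamma$, $\alpha_i$ is an instance of some axiom schema in $\Psi$, or $\alpha_i$ is obtained by \MP\ from previous $\alpha_j$ and $\alpha_k = \alpha_j \rightarrow \alpha_i$.

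The first case is trivial, since $T(\alpha_i) \in T(\Gamma)$. The \MP\ case is also routine: by the inductive hypothesis $T(\Gamma)\vdash_{\mathcal{L}^*} T(\alpha_j)$ and $T(\Gamma)\vdash_{\mathcal{L}^*} T(\alpha_j \rightarrow \alpha_i)$; but $T$ commutes with implication by clause~3 of its definition, so $T(\alpha_j \rightarrow \alpha_i) = T(\alpha_j) \rightarrow T(\alpha_i)$, and one application of \MP\ in $\mathcal{L}^*$ delivers $T(\Gamma)\vdash_{\mathcal{L}^*} T(\alpha_i)$.

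The axiom case is where Lemma~\ref{lemax} does all the work. If $\alpha_i$ is an instance of a schema $\psi \in \Psi$, then $\alpha_i = \lambda(\psi)$ for some endomorphism $\lambda$ of $\mathbf{F}(\Sigma_{\textbf{LFI}},\mathcal{V})$. By Lemma~\ref{lemax}, there exists an endomorphism $\overline{\lambda}$ of $\mathbf{F}(\Sigma_{\nbI},\mathcal{V})$ with $T\circ\lambda=\overline{\lambda}\circ T$, whence
\[
T(\alpha_i) \;=\; T(\lambda(\psi)) \;=\; \overline{\lambda}(T(\psi)).
\]
Since $T(\psi) \in T(\Psi)$ is an axiom schema of $\mathcal{L}^*$, the formula $\overline{\lambda}(T(\psi))$ is an instance of it, hence a theorem of $\mathcal{L}^*$, and $T(\Gamma) \vdash_{\mathcal{L}^*} T(\alpha_i)$.

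There is no real obstacle here; the heart of the argument is simply the observation that $T$ is defined uniformly on the variables and respects the connectives $\vee, \wedge, \rightarrow, \neg$ strictly, while the one non-literal clause ($T(\circ\alpha) = T(\alpha)\inc\neg T(\alpha)$) has already been absorbed into the definition of $\mathcal{L}^*$ by taking $T(\Psi)$ as its axioms. The only subtle point worth flagging is that one must use the endomorphism $\overline{\lambda}$ supplied by Lemma~\ref{lemax}, rather than trying to substitute naively into $T(\psi)$, since a schema variable appearing inside a $\circ$ in $\psi$ gets translated as part of the block $(-)\inc\neg(-)$ and must be instantiated coherently in both occurrences.
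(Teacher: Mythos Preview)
Your proof is correct and follows essentially the same approach as the paper: induction on the length of a derivation, using Lemma~\ref{lemax} for the axiom case and the fact that $T$ commutes with $\rightarrow$ for the \MP\ case. The paper's version is slightly terser, phrasing the induction as showing that the translated sequence $T(\alpha_1),\ldots,T(\alpha_n)$ is itself a proof in $\mathcal{L}^*$, but the content is the same.
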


\begin{proof}
Proceed by induction on the length $n$ of a proof $\alpha_{1}, \ldots  , \alpha_{n}$ of $\varphi$ in $\mathcal{L}$, showing (using Lemma~\ref{lemax} for instances of axioms) that the sequence $T(\alpha_{1}), \ldots  , T(\alpha_{n})$ is a proof of $T(\varphi)$ in $\mathcal{L}^{*}$.\qedhere
\end{proof}

The latter result proves that $T$ is always a translation from $\mathcal{L}$ to $\mathcal{L}^{*}$.

Notice that all the instances of axiom schemata of $\textbf{mbC}^{*}$ (respectively of $\textbf{mbCax}^{*}$ for an axiom $\textbf{ax}$ among $\{\textbf{ciw}, \textbf{ci}, \textbf{cl}\}$) are instances of axiom schemata of $\nbI$ (respectively $\nbI\textbf{ax}$), meaning that the systems $\nbI$ and $\nbI\textbf{ax}$ are stronger than, respectively, $\mbC$ and $\mbC\textbf{ax}$; moreover, the former are actually strictly stronger than the latter, since, e.g.,  $(\alpha\inc\alpha)\rightarrow(\alpha\rightarrow(\alpha\rightarrow\beta))$ is a tautology of the former but not the latter. More prominently, observe $\nbIcl$: it is well known that in $\mbCcl$, $\neg(\alpha\wedge \neg\alpha)$ does not imply $\neg(\neg\alpha\wedge \alpha)$ and, as we shall prove, this remains true for $\mbCcl^{*}$. But in $\nbIcl$ , from $\textbf{cl}^{*}$, $\textbf{Comm}$ and $\textbf{Ip}$ one derives
\[\vdash_{\nbIcl}\neg(\alpha\wedge\neg\alpha)\rightarrow\neg(\neg\alpha\wedge\alpha).\]


Let $\vDash_{\textbf{mbC}}$ be the consequence of \mbc\ w.r.t. its bivaluation semantics. By the results  found in Section~$2.2$ of~\cite{ParLog} we have the following:

\begin{teo} \label{complmbC}
For formulas $\Gamma\cup\{\varphi\}$ of $\textbf{mbC}$, $\Gamma\vdash_{\textbf{mbC}}\varphi$ iff $\Gamma\vDash_{\textbf{mbC}}\varphi$.
\end{teo}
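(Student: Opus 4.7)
The plan is to mirror the soundness/completeness strategy already used for \bI\ and \nbI, since this is exactly the same pattern applied to the signature $\Sigma_{\textbf{LFI}}$ and the mbC-specific bivaluation clauses (the standard ones for $\lor$, $\land$, $\to$, plus: $\nu(\neg\alpha)=0 \Rightarrow \nu(\alpha)=1$, and $\nu(\cons\alpha)=\nu(\alpha)=1 \Rightarrow \nu(\neg\alpha)=0$).

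For the soundness direction I would first verify that every instance of each axiom schema \textbf{Ax 1}--\textbf{Ax 10} and of \textbf{bc1} is mapped to $1$ by any \mbc-bivaluation $\nu$: the positive-fragment axioms are immediate from clauses for $\lor,\land,\to$; \textbf{Ax 10} uses the clause $\nu(\neg\alpha)=0 \Rightarrow \nu(\alpha)=1$; and \textbf{bc1} uses the clause on $\cons$. Then I would check that \MP\ preserves truth under $\nu$ (this is automatic from the clause for $\to$). A routine induction on the length of a derivation $\alpha_1,\ldots,\alpha_n=\varphi$ from $\Gamma$ shows that if $\nu(\Gamma)\subseteq\{1\}$ then $\nu(\varphi)=1$, giving $\Gamma\vdash_{\mbc}\varphi \Rightarrow \Gamma\vDash_{\mbc}\varphi$.

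For completeness I would argue by contraposition exactly as in the proof for \bI. Suppose $\Gamma\nvdash_{\mbc}\varphi$. Since \mbc\ is Tarskian and finitary, there is a $\varphi$-saturated set $\Delta$ in \mbc\ with $\Gamma\subseteq\Delta$. Define $\nu:\mathbf{F}(\Sigma_{\textbf{LFI}},\mathcal{V})\to\{0,1\}$ by $\nu(\psi)=1$ iff $\psi\in\Delta$. The task is then to verify that $\nu$ is an \mbc-bivaluation. The clauses for $\lor$, $\land$, $\to$ are standard and follow from closure under \MP\ together with the axioms of $\cplp$ and proof-by-cases (which holds in \mbc\ since it extends $\cplp$); these arguments are the same as those implicit in the \bI\ case. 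Having $\nu$ in hand, $\nu(\Gamma)\subseteq\{1\}$ while $\nu(\varphi)=0$, yielding $\Gamma\nvDash_{\mbc}\varphi$.

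The main obstacle, and the only step that is genuinely \mbc-specific rather than a direct transcription of the \bI\ proof, is the verification of the two negation/consistency clauses. For the clause on $\neg$ I would use that \textbf{Ax 10}, $\alpha\vee\neg\alpha$, belongs to the closed theory $\Delta$ together with the standard characterisation of $\lor$-membership in a $\varphi$-saturated set, forcing $\alpha\in\Delta$ whenever $\neg\alpha\notin\Delta$. For the clause on $\cons$ I would argue that, if $\cons\alpha,\alpha\in\Delta$, then by \textbf{bc1} and two applications of \MP\ we get $\neg\alpha\to\psi\in\Delta$ for every $\psi$; choosing $\psi=\varphi$ and using that $\Delta\nvdash_{\mbc}\varphi$ while $\Delta$ is closed and $\varphi$-saturated forces $\neg\alpha\notin\Delta$, i.e., $\nu(\neg\alpha)=0$. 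Once these two clauses are secured, the same schema that proved completeness for \bI\ closes the argument, and the full iff statement of the theorem follows.
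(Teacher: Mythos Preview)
Your proposal is correct and follows exactly the standard soundness/completeness template that the paper itself uses for \bI\ and \nbI. Note, however, that the paper does not actually prove this theorem: it is stated as a known result, imported verbatim from Section~2.2 of~\cite{ParLog} (see the sentence immediately preceding the theorem). So there is no ``paper's own proof'' to compare against; your argument is precisely the one that reference carries out, and it fits the pattern already rehearsed in the paper for the incompatibility systems.
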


This result will serve us to prove that if $T(\Gamma)\vdash_{\textbf{mbC}^{*}}T(\varphi)$, then $\Gamma\vdash_{\textbf{mbC}}\varphi$. By contraposition, suppose that $\Gamma\not\vdash_{\textbf{mbC}}\varphi$. Then, by Theorem~\ref{complmbC} there exists a bivaluation $\nu$ for $\textbf{mbC}$ such that $\nu(\Gamma)\subseteq \{1\}$ and $\nu(\varphi)=0$. We define a function $\nu^{*}$ from the formulas of $\nbI$ to $\{0,1\}$ by structural induction:

\begin{enumerate}
\item for a propositional variable $p$, $\nu^{*}(p)=\nu(p)$;
\item $\nu^{*}(\alpha\#\beta)=\nu^{*}(\alpha)\#\nu^{*}(\beta)$, for any $\#\in\{\vee, \wedge, \rightarrow\}$;
\item \begin{enumerate}
\item $\nu^{*}(\neg  T(\psi))=\nu(\neg \psi)$;
\item if $\alpha \neq T(\psi)$ for every $\psi$, $\nu^{*}(\neg \alpha)=1$;
\end{enumerate}
\item \begin{enumerate}
\item $\nu^{*}(T(\psi)\inc\neg T(\psi))=\nu^{*}(\neg T(\psi)\inc T(\psi))=\nu(\circ\psi)$;
\item in any other case, $\nu^{*}(\alpha\inc\beta)=0$.
\end{enumerate}
\end{enumerate}

\begin{prop}
If $\psi$ is a formula of $\textbf{mbC}$ then $\nu^{*}(T(\psi))=\nu(\psi)$.
\end{prop}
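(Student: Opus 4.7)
The plan is to proceed by structural induction on $\psi$, using the injectivity of $T$ (already established) to guarantee that the clauses defining $\nu^{*}$ on formulas of the form $\neg T(\psi)$ and $T(\psi)\inc\neg T(\psi)$ are unambiguous, and using the fact that $\nu$ is a bivaluation for \mbc\ to match the inductive values on each side.

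First I would dispatch the base case: if $\psi = p$ is a propositional variable, then $T(p) = p$ and $\nu^{*}(p) = \nu(p)$ by clause~1 of the definition of $\nu^{*}$. Next, for the binary cases $\psi = \alpha \# \beta$ with $\# \in \{\vee, \wedge, \rightarrow\}$, we have $T(\alpha \# \beta) = T(\alpha) \# T(\beta)$, and by clause~2 of the definition, together with the induction hypothesis, $\nu^{*}(T(\alpha \# \beta)) = \nu^{*}(T(\alpha)) \# \nu^{*}(T(\beta)) = \nu(\alpha) \# \nu(\beta)$; this equals $\nu(\alpha \# \beta)$ since $\nu$ is a bivaluation for \mbc, whose clauses for the positive connectives coincide with those of \bI.

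For the paraconsistent negation case $\psi = \neg\alpha$, we have $T(\neg\alpha) = \neg T(\alpha)$. Since $T$ is injective, the formula $\neg T(\alpha)$ matches uniquely the pattern in clause~3(a), so $\nu^{*}(\neg T(\alpha)) = \nu(\neg\alpha)$ directly. For the consistency case $\psi = \circ\alpha$, we have $T(\circ\alpha) = T(\alpha)\inc\neg T(\alpha)$; by injectivity of $T$ the formula $T(\alpha)\inc\neg T(\alpha)$ matches uniquely the pattern in clause~4(a), yielding $\nu^{*}(T(\alpha)\inc\neg T(\alpha)) = \nu(\circ\alpha)$. This exhausts the cases, since \mbc\ is built from variables by $\vee$, $\wedge$, $\rightarrow$, $\neg$ and $\circ$.

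The only delicate point is ensuring that the matches against $T(\psi)$ in clauses~3(a) and~4(a) are unambiguous: this is where the injectivity of $T$ plays a role. Specifically, if $\neg\beta = \neg T(\psi)$ for some formula $\psi$ of \mbc, then $\beta = T(\psi)$ and $\psi$ is uniquely determined; similarly, if $\beta\inc\gamma = T(\psi)\inc\neg T(\psi)$ (or its symmetric form) for some $\psi$, then $\beta = T(\psi)$ and again $\psi$ is uniquely determined. Thus $\nu^{*}$ is well-defined, and the induction goes through routinely without any obstacle of substance.
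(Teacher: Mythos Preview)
Your proof is correct and follows exactly the approach indicated in the paper, which simply says to proceed by induction on the complexity of $\psi$; you have carefully spelled out each inductive case and, in addition, explicitly addressed the well-definedness of $\nu^{*}$ via the injectivity of $T$, which the paper leaves implicit.
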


\begin{proof}
Proceed by induction on the complexity of $\psi$.\qedhere
\end{proof}

\begin{prop}
$\nu^{*}$, as defined above, is a bivaluation for $\nbI$.
\end{prop}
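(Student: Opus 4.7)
The plan is to verify each of the six conditions in the definition of an \nbI-bivaluation for $\nu^*$, taking full advantage of clause~2 (which forces $\nu^*$ to be homomorphic on $\vee,\wedge,\rightarrow$) and of the proposition just established, namely $\nu^*(T(\psi))=\nu(\psi)$ for every \mbc-formula $\psi$.

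Conditions 1--3 (the clauses for $\vee$, $\wedge$, $\rightarrow$) are immediate from clause~2 of the definition of $\nu^*$, since $\{0,1\}$ is the two-element Boolean algebra and the operations coincide with the classical truth tables. For condition 5 (commutativity of \inc), I split according to clause~4: if $\alpha\inc\beta$ is of the form $T(\psi)\inc \neg T(\psi)$ or $\neg T(\psi)\inc T(\psi)$, then by injectivity of $T$ the ``partner'' $\beta\inc\alpha$ is of the other form (or the same), and clause~4(a) assigns both the common value $\nu(\cons\psi)$; otherwise neither $\alpha\inc\beta$ nor $\beta\inc\alpha$ is of that shape, so clause~4(b) assigns $0$ to both.

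For condition 4, suppose $\nu^*(\alpha\inc\beta)=1$. By clause~4(b) this can only happen in the case of clause~4(a), so $\{\alpha,\beta\}=\{T(\psi),\neg T(\psi)\}$ for some \mbc-formula $\psi$, and $\nu(\cons\psi)=1$. Since $\nu$ is an \mbc-bivaluation, the validity of axiom $\textbf{bc1}$ under $\nu$ forces $\nu(\psi)=0$ or $\nu(\neg\psi)=0$. Using the proposition $\nu^*(T(\psi))=\nu(\psi)$ together with clause~3(a) of the definition, which gives $\nu^*(\neg T(\psi))=\nu(\neg\psi)$, we conclude $\nu^*(T(\psi))=0$ or $\nu^*(\neg T(\psi))=0$, which is exactly $\nu^*(\alpha)=0$ or $\nu^*(\beta)=0$.

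Finally, for condition 6, suppose $\nu^*(\neg\alpha)=0$. Clause~3(b) shows that if $\alpha$ is not of the form $T(\psi)$ then $\nu^*(\neg\alpha)=1$; hence $\alpha=T(\psi)$ for some $\psi$, and by clause~3(a), $\nu(\neg\psi)=\nu^*(\neg T(\psi))=0$. Because $\nu$ validates axiom \textbf{Ax 10} of \mbc\ (i.e., $\alpha\vee\neg\alpha$), this forces $\nu(\psi)=1$, and by the proposition $\nu^*(\alpha)=\nu^*(T(\psi))=\nu(\psi)=1$, as required. I expect the only delicate point to be the case-analysis for condition 4, where one must notice that clause~4(a) is the only source of value $1$ for $\inc$ and then invoke \textbf{bc1} on the side of \mbc; the rest is an essentially bookkeeping verification.
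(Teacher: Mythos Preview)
Your proof is correct. The paper states this proposition without proof, so there is nothing to compare against; your direct verification of the six clauses---using clause~2 for the Boolean connectives, the symmetry built into clause~4(a) for condition~5, the observation that only clause~4(a) can yield value $1$ for $\inc$ (together with the \mbc-bivaluation clause for $\cons$) for condition~4, and clause~3 together with the \mbc-clause for $\neg$ for condition~6---is exactly the routine check the authors leave to the reader.
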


From this, $\nu^{*}(T(\Gamma))=\nu(\Gamma)\subseteq \{1\}$ but $\nu^{*}(T(\varphi))=\nu(\varphi)=0$, what implies that $T(\Gamma)\not\vDash_{\nbI}T(\varphi)$. By soundness,  $T(\Gamma)\not\vdash_{\nbI}T(\varphi)$. From this,  $T(\Gamma)\not\vdash_{\textbf{mbC}^{*}}T(\varphi)$, since $\nbI$ is strictly stronger than $\textbf{mbC}^{*}$.  This shows that, if $T(\Gamma)\vdash_{\textbf{mbC}^{*}}T(\varphi)$, then $\Gamma\vdash_{\textbf{mbC}}\varphi$.

By Theorem~\ref{T-translat} the converse also holds, and therefore $\Gamma\vdash_{\textbf{mbC}}\varphi$ iff $T(\Gamma)\vdash_{\textbf{mbC}^{*}}T(\varphi)$.

Now, by Theorem~\ref{T-translat} once again we know that $\Gamma\vdash_{\textbf{mbCax}}\varphi$ implies that $T(\Gamma)\vdash_{\textbf{mbCax}^{*}}T(\varphi)$. To prove the converse, as was done with $\textbf{mbC}$, we use bivaluations. The following result can be found in Sections~$3.1$ and~$3.3$ of \cite{ParLog} (here, $\vDash_{\textbf{mbCax}}$ denotes the consequence relation of \textbf{mbCax} w.r.t. bivaluations).

\begin{teo}
$\Gamma\vdash_{\textbf{mbCax}}\varphi$ iff $\Gamma\vDash_{\textbf{mbCax}}\varphi$.
\end{teo}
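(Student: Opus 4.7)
The plan is to adapt the standard Lindenbaum-style soundness and completeness argument already deployed in the paper for \bI, \nbI\ and its extensions. First I need to fix what a bivaluation for $\textbf{mbCax}$ is (not explicitly displayed in this excerpt, but implicit from the discussion): a map $\nu:\mathbf{F}(\Sigma_{\textbf{LFI}},\mathcal{V})\to\{0,1\}$ satisfying the usual homomorphism-style clauses for $\vee,\wedge,\rightarrow$, the $\textbf{mbC}$-clauses governing $\neg$ and $\circ$ (in particular: if $\nu(\circ\alpha)=\nu(\alpha)=1$ then $\nu(\neg\alpha)=0$, i.e., the \textbf{bc1}-clause), plus one extra clause reflecting the axiom $\textbf{ax}$: for $\textbf{ciw}$, $\nu(\circ\alpha)=0$ implies $\nu(\alpha)=\nu(\neg\alpha)=1$; for $\textbf{ci}$, $\nu(\neg\circ\alpha)=1$ implies $\nu(\alpha)=\nu(\neg\alpha)=1$; and for $\textbf{cl}$, $\nu(\neg(\alpha\wedge\neg\alpha))=1$ implies $\nu(\circ\alpha)=1$.

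For the soundness direction, I would induct on the length of a derivation $\alpha_1,\ldots,\alpha_n=\varphi$ in $\textbf{mbCax}$, showing that whenever $\nu(\gamma)=1$ for all $\gamma\in\Gamma$, we have $\nu(\alpha_i)=1$ for every $i$. The axioms of $\textbf{mbC}$ (\textbf{Ax 1}--\textbf{Ax 10} and \textbf{bc1}) are handled as in the completeness proof of \mbc\ recalled earlier (Theorem \ref{complmbC}), and the additional axiom \textbf{ax} is validated directly from the corresponding extra bivaluation clause. Modus Ponens is immediate from the clause for $\rightarrow$.

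For completeness, I would argue by contraposition. Assume $\Gamma\nvdash_{\textbf{mbCax}}\varphi$; since $\textbf{mbCax}$ is Tarskian and finitary, there is a $\varphi$-saturated set $\Delta\supseteq\Gamma$ in $\textbf{mbCax}$. Define the characteristic function $\nu_\Delta(\alpha)=1$ iff $\alpha\in\Delta$. Exactly as argued before for \bI\ and \nbI, the clauses for $\vee,\wedge,\rightarrow,\neg$ hold because $\Delta$ is a closed theory (hence contains $\alpha\vee\beta$ iff it contains one of the disjuncts, etc.), and the $\textbf{mbC}$-clause for $\circ$ follows from $\textbf{bc1}$. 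For the extra clause corresponding to \textbf{ax}, one uses that $\Delta$ contains every instance of \textbf{ax} together with closure under \MP\ and proof-by-cases: e.g., in the $\textbf{cl}$ case, if $\neg(\alpha\wedge\neg\alpha)\in\Delta$ then $\circ\alpha\in\Delta$ by \MP\ applied to \textbf{cl}; the other cases are analogous. With $\nu_\Delta$ shown to be an $\textbf{mbCax}$-bivaluation satisfying $\nu_\Delta(\Gamma)\subseteq\{1\}$ and $\nu_\Delta(\varphi)=0$, we conclude $\Gamma\nvDash_{\textbf{mbCax}}\varphi$.

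The only mildly delicate step is verifying the characteristic-function clause for $\circ$ in the $\textbf{ciw}$ case, where one must rule out $\nu_\Delta(\circ\alpha)=0$ together with $\nu_\Delta(\alpha)=0$ or $\nu_\Delta(\neg\alpha)=0$; this follows from proof-by-cases applied to the instance $\circ\alpha\vee(\alpha\wedge\neg\alpha)$ of \textbf{ciw} inside $\Delta$. Everything else is a routine translation of the arguments already carried out for $\nbI$, $\nbIciw$, $\nbIci$ and $\nbIcl$ earlier in the paper, which is presumably why the authors simply cite Sections 3.1 and 3.3 of \cite{ParLog}.
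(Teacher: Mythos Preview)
Your proposal is correct and spells out precisely the standard Lindenbaum-style argument that the paper defers to by citing Sections~3.1 and~3.3 of \cite{ParLog}; the soundness-by-induction and completeness-via-$\varphi$-saturated-sets strategy you outline is exactly the one used there and mirrors the arguments already carried out in this paper for \bI, \nbI\ and their extensions. There is nothing to add: the paper gives no independent proof of this statement, and your sketch faithfully reconstructs the cited one.
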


By contraposition, suppose that $\Gamma\not\vdash_{\textbf{mbCax}}\varphi$, and so there exists a bivaluation $\nu$ for $\textbf{mbCax}$ such that $\nu(\Gamma)\subseteq\{1\}$ but $\nu(\varphi)=0$. We define $\nu^{\dagger}$ from the formulas over $\Sigma_{\nbI}$ to $\{0,1\}$ by structural induction as in the case of the logic $\nbI$ and $\nu^{*}$, with the exceptions of items $(3)$ and $(4)$ which are slightly modified:

\begin{enumerate}
\item[$3^{\prime}$] $\nu^{\dagger}(\neg T(\psi))=\nu(\neg \psi)$, and if $\alpha\neq T(\psi)$ for every $\psi$:
\begin{enumerate}
\item for $\mathcal{L}=\textbf{mbCci}$ and $\alpha=\beta\inc\neg\beta$, $\nu^{\dagger}(\beta\wedge\neg \beta)=0\Rightarrow \nu^{\dagger}(\neg \alpha)=0$;
\item for $\mathcal{L}=\textbf{mbCcl}$ and $\alpha=\beta\wedge\neg \beta$, $\nu^{\dagger}(\beta\inc\neg\beta)=0\Rightarrow \nu^{\dagger}(\neg \alpha)=0$;
\item otherwise, $\nu^{\dagger}(\neg \alpha)=1$;
\end{enumerate}
\item[$4^{\prime}$]  $\nu^{\dagger}(T(\psi)\inc\neg T(\psi))=\nu^{\dagger}(\neg T(\psi)\inc T(\psi))=\nu(\circ\psi)$, and if $\alpha\neq T(\psi)$ for every $\psi$:
\begin{enumerate}
\item $\nu^{\dagger}(\alpha\inc\neg\alpha)=\nu^{\dagger}(\neg\alpha\inc\alpha)=1\Leftrightarrow \nu^{\dagger}(\alpha)=0$ or $\nu^{\dagger}(\neg \alpha)=0$; 
\item otherwise, $\nu^{\dagger}(\alpha\inc\beta)=\nu^{\dagger}(\beta\inc\alpha)=0$.
\end{enumerate}
\end{enumerate}

\begin{prop}
If $\psi\in \mathbf{F}(\Sigma_{\textbf{LFI}}, \mathcal{V})$ then $\nu^{\dagger}(T(\psi))=\nu(\psi)$.
\end{prop}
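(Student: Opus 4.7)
The plan is to proceed by structural induction on the complexity of $\psi \in \mathbf{F}(\Sigma_{\textbf{LFI}}, \mathcal{V})$, exactly as in the proof of the analogous proposition for $\nu^{*}$ earlier in the subsection. The argument is essentially a verification that each defining clause of $\nu^{\dagger}$ was tailored precisely so as to force the translation identity on formulas in the image of $T$, with the remaining clauses handling formulas outside this image (which matters only later, when one checks that $\nu^{\dagger}$ is a genuine bivaluation for the relevant extension of \nbI).

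For the base case, if $\psi = p$ is a propositional variable, then $T(p) = p$ and $\nu^{\dagger}(p) = \nu(p)$ by clause~1 of the definition. For the inductive step, I would distinguish four cases according to the outermost connective of $\psi$:
\begin{enumerate}
\item If $\psi = \psi_{1} \# \psi_{2}$ with $\# \in \{\vee, \wedge, \rightarrow\}$, then $T(\psi) = T(\psi_{1}) \# T(\psi_{2})$. Clause~2 gives $\nu^{\dagger}(T(\psi)) = \nu^{\dagger}(T(\psi_{1})) \# \nu^{\dagger}(T(\psi_{2}))$, which by the inductive hypothesis equals $\nu(\psi_{1}) \# \nu(\psi_{2}) = \nu(\psi_{1} \# \psi_{2})$, since $\nu$ is a bivaluation for $\textbf{mbCax}$ and so agrees with \cplp\ on these connectives.
\item If $\psi = \neg \psi_{0}$, then $T(\psi) = \neg T(\psi_{0})$, a formula in the image of $T$. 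Hence clause~$3'$ applies in its first sub-case, yielding $\nu^{\dagger}(\neg T(\psi_{0})) = \nu(\neg \psi_{0}) = \nu(\psi)$ directly (no recourse to the inductive hypothesis is even needed).
\item If $\psi = \cons \psi_{0}$, then $T(\psi) = T(\psi_{0}) \inc \neg T(\psi_{0})$. Clause~$4'$ (first sub-case) immediately gives $\nu^{\dagger}(T(\psi_{0}) \inc \neg T(\psi_{0})) = \nu(\cons \psi_{0}) = \nu(\psi)$.
\end{enumerate}

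Combining the four cases closes the induction. The only thing worth flagging, although it is trivially verified, is that the side conditions in clauses~$3'$ and~$4'$ that restrict attention to $\alpha \neq T(\psi)$ are never triggered along this induction, because at every step the argument of $\neg$ or the pair appearing in $\inc$ is visibly in the image of $T$; so the auxiliary sub-cases (a), (b), (c) of $3'$ and (a), (b) of $4'$ play no role here. The real work of the section, and what I would regard as the main obstacle, is not this induction but the subsequent verification that the function $\nu^{\dagger}$ is actually a bivaluation for $\nbI\textbf{ax}$: that is where the delicate sub-cases of $3'$ and $4'$ must be checked against conditions 1, 2(a) and 2(b) of the bivaluation clauses for $\nbIciw$, $\nbIci$ and $\nbIcl$.
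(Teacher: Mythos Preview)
Your proposal is correct and follows exactly the approach indicated in the paper, namely a straightforward structural induction on the complexity of $\psi$; your elaboration of the cases is accurate, and your observation that the auxiliary sub-cases of clauses~$3'$ and~$4'$ are never triggered along this induction is the right way to see why those clauses do not interfere. (A trivial quibble: you announce ``four cases'' for the inductive step but list three; presumably you meant to count the base case as well, or to separate the three binary connectives.)
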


\begin{prop}
$\nu^{\dagger}$, as defined above, is a bivaluation for $\nbI\textbf{ax}$.
\end{prop}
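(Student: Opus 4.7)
The plan is to verify, one clause at a time, that $\nu^{\dagger}$ satisfies each condition required of a bivaluation for $\nbI\textbf{ax}$: conditions 1--5 inherited from \bI, the negation condition for \nbI, and then the axiom-specific conditions for \textbf{ciw}, \textbf{ci} or \textbf{cl}. Throughout, the key leverage comes from two observations: first, the preceding proposition that $\nu^{\dagger}(T(\psi)) = \nu(\psi)$ for every $\psi \in \mathbf{F}(\Sigma_{\textbf{LFI}}, \mathcal{V})$; and second, a simple syntactic dichotomy, namely that a formula $\beta \inc \gamma$ of $\Sigma_{\nbI}$ lies in the image of $T$ if and only if $\gamma = \neg\beta$ and $\beta = T(\psi)$ for some $\psi$, in which case $\beta \inc \gamma = T(\circ\psi)$; analogously, $\beta \wedge \neg\beta$ lies in the image of $T$ iff $\beta = T(\psi)$, in which case it equals $T(\psi \wedge \neg\psi)$. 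This dichotomy organises every case split.

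For clauses 1--3 (the Boolean connectives) and clause 5 (commutativity of $\inc$), the explicit recursion in the definition of $\nu^{\dagger}$ delivers them immediately. For clause 4 (the one-sided explosion clause for $\inc$) I case split: if $\alpha \inc \beta$ is the translation $T(\psi) \inc \neg T(\psi)$ of some $\circ\psi$, then $\nu^{\dagger}(\alpha\inc\beta) = \nu(\circ\psi)$, and the needed implication reduces, via $\nu^{\dagger}\circ T = \nu$, to the \textbf{bc1}-clause for the \textbf{mbCax}-bivaluation $\nu$. Otherwise, clause $4'$(a) asserts exactly the desired equivalence, and the remaining $\inc$-pairs, which are assigned $0$, satisfy clause 4 vacuously.

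For clause 6 ($\nu^{\dagger}(\neg\alpha) = 0$ implies $\nu^{\dagger}(\alpha) = 1$), the split is again on whether $\alpha$ is in the image of $T$. If $\alpha = T(\psi)$, then $\nu^{\dagger}(\neg T(\psi)) = \nu(\neg\psi) = 0$ forces $\nu(\psi) = 1$ by the excluded-middle clause of an \textbf{mbC}-bivaluation, so $\nu^{\dagger}(\alpha) = 1$. If $\alpha$ is outside the image of $T$, then $\nu^{\dagger}(\neg\alpha) = 0$ can only arise through clauses $3'$(a) or $3'$(b); in each of those situations, clause $4'$(a) (applied to $\beta \inc \neg\beta$ where $\beta$ is not in the image of $T$, since otherwise $\alpha$ would be too) yields the required $\nu^{\dagger}(\alpha) = 1$.

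Finally, the axiom-specific clauses for $\mathcal{L} \in \{\textbf{mbCciw},\textbf{mbCci},\textbf{mbCcl}\}$ are handled by the same dichotomy: when the relevant principal subformula ($\alpha \inc \neg\alpha$, $\neg(\alpha\inc\neg\alpha)$, or $\neg(\alpha\wedge\neg\alpha)$) lies in the image of $T$, the required implication is a direct transcription of the corresponding \textbf{mbCax}-bivaluation axiom for $\nu$, using $\nu^{\dagger}\circ T = \nu$; when it does not, the implications were built into clauses $3'$(a), $3'$(b), and $4'$(a) precisely for this purpose. The main obstacle, and where the bookkeeping is concentrated, is confirming the internal consistency of clauses $3'$ and $4'$ on the mixed-image instances: one must check that the values they assign to $\neg(\beta\inc\neg\beta)$, $\neg(\beta\wedge\neg\beta)$, $\beta\inc\neg\beta$ and $\neg\beta\inc\beta$ agree whenever the logic forces an equivalence (e.g.\ the \textbf{ci} and \textbf{cl} equivalences), so that $\nu^{\dagger}$ is well-defined and indeed validates every relevant axiom simultaneously.
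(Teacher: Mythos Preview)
The paper states this proposition without proof, so there is nothing to compare against; your case analysis is the natural and correct way to establish it. Your organising dichotomy (whether the relevant subformula lies in the image of $T$) is exactly right, and your verification of clauses~4 and~6 and of the $\textbf{ciw}^*/\textbf{ci}^*/\textbf{cl}^*$ conditions goes through as you describe. One small clarification on your closing paragraph: the ``internal consistency'' of clauses $3'$ and $4'$ is not really a separate obstacle, since the definition of $\nu^{\dagger}$ is by structural recursion on formulas and the sub-clauses are syntactically disjoint (e.g.\ $T(\psi)\inc\neg T(\psi)$ can never coincide with $\neg\alpha\inc\alpha$ for $\alpha\notin T(\mathbf{F}(\Sigma_{\textbf{LFI}},\mathcal{V}))$, because that would force $\alpha=\neg T(\psi)=T(\neg\psi)$). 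So well-definedness is automatic, and what you call ``agreement under the logic's equivalences'' is precisely the content of the bivaluation clauses you have already checked.
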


We see that, if $\nu$ is a bivaluation for $\textbf{mbCax}$, $\nu^{\dagger}$ is a bivaluation for $\nbI\textbf{ax}$ such that $\nu^{\dagger}(T(\Gamma))=\nu(\Gamma)\subseteq\{1\}$ but $\nu^{\dagger}(T(\varphi))=\nu(\varphi)=0$. This implies that $T(\Gamma)\not\vDash_{\nbI\textbf{ax}}T(\varphi)$, hence $T(\Gamma)\not\vdash_{\nbI\textbf{ax}}T(\varphi)$. Since $\nbI\textbf{ax}$ is  stronger than $\textbf{mbCax}^{*}$, it follows that $T(\Gamma)\not\vdash_{\textbf{mbCax}^{*}}T(\varphi)$. We have therefore that, if $T(\Gamma)\vdash_{\textbf{mbCax}^{*}}T(\varphi)$ then $\Gamma\vdash_{\textbf{mbCax}}\varphi$. By Theorem~\ref{T-translat} the converse also holds, and so $\Gamma\vdash_{\textbf{mbCax}}\varphi$ iff $T(\Gamma)\vdash_{\textbf{mbCax}^{*}}T(\varphi)$, for any $\textbf{ax}\in\{\textbf{ciw}, \textbf{ci}, \textbf{cl}\}$.

The results obtained in this section show, in a precise way, that the logics of formal incompatibility presented here are more expressive than the family of \lfis\ studied, for instance, in Chapters~2 and~3 of~\cite{ParLog}, since they can faithfully encode, in  a natural way, all these systems. In this specific sense we can say that the notion of logical incompatibility is  more general than the notion of consistency (or classicality) of \lfis.

\section{Brandom's notion of incompatibility}\label{sixth}

As we mentioned above, the idea of considering the notion of ``logical incompatibility'' is not new. In particular, the philosopher Robert B. Brandom analyzed this concept in several of his works.
In this section we offer a brief overview of Brandom's work with incompatibility, echoed in Jaroslav Peregrin's research. The differences between their own methodology and ours are many. Instead of considering a binary connective for incompatibility between two sentences, they focus on incompatibility between sets of formulas. Moreover, they aim to define the notion of logical consequence from the notion of incompatibility, while we assume both to coexist. And, perhaps most importantly, although many logic systems can be retrieved from their methods, those of a paraconsistent behavior are not among them, as the only negations considered by them are classical, or at most intuitionistic. But the connection between both works is there, and quite clear: the attempt to control, in a way or another, logical explosion by mediating it through a certain notion of incompatibility.

\subsection{Brandom and Aker's ``Between saying and doing''}

In \cite{Brandom}, Brandom and Aker propose an approach to logic through incompatibility, instead of consequence; they defend that a modal understanding of incompatibility fits better with the argumentative nature of epistemology, in their neo-pragmatic program. He then proceeds to redefine consequence, and the usual connectives, starting from incompatibility as a primitive notion.

Initially, the authors start with the modal notions of commitment and entitlement: two statements are then incompatible when being committed to one implies not being entitled to the other. They then define that a statement $p$ implies $q$ whenever every statement incompatible with $q$ is incompatible with $p$, but later further generalize that: they believe that incompatibility must deal with pairs of sets of statements, their argument being that a claim may be incompatible with a set of claims without being incompatible with any particular element. This is not of concern to us, since we do not aim to encompass every reasoning involving incompatibility, but rather prefer to focus on its interplay with paraconsistency and its possible semantics. \cite{Brandom} demands only two properties of incompatibility, derived from its natural interpretation and intuition:
\begin{enumerate}
\item if $X$ is incompatible with $Y$, $Y$ is incompatible with $X$ (symmetry);
\item if $X$ is incompatible with $Y$, and $Z$ contains $Y$, then $X$ is incompatible with $Z$ (persistence).
\end{enumerate}

In our logics of incompatibility, symmetry is analogous to the commutative axiom $\textbf{Comm}$; regarding persistence, we may, for sets of formulas $X$ and $Y$, write a generalized incompatibility operator $X\inc Y$ whenever there exist formulas $\alpha$ and $\beta$ such that $X\vdash \alpha$, $Y\vdash\beta$ and $\alpha\inc\beta$. Then, persistence is reobtained on this environment. Of course, this is not to say that our incompatibility logics naturally model Brandom and Aker's approach, but rather to show its versatility and expressibility.

The authors stress that their incompatibility should not be limited to truth values, but they still mention the interpretation of $p$ and $q$ being incompatible as the impossibility of $p$ and $q$'s conjunction. This is very distant from our own take on incompatibility, as it limits incompatibility to the modal formula $\square{\sim}(p\wedge q)$. To further distance its incompatibility from ours, \cite{Brandom} takes as a tacit starting point that a statement should be incompatible with its negation, what may be justified if they intend to replicate merely classical negation, but disregards non-classical (paraconsistent) negations and assumes prior connections between incompatibility and negation. From there, the negation of a claim $p$ is defined as the minimum (with respect to deduction) claim incompatible to $p$. Notice that, in $\bI$, if $p$ and $q$ are incompatible, then $p$ implies the negation of $q$: this may seem awfully close to Brandom and Aker's account. However, also notice that ${\sim}q$ is not, necessarily, incompatible with $q$, and we do not intend to define negation from this relationship. It becomes clear that~\cite{Brandom} has no interest in dealing with paraconsistency when it defines an inconsistent set of formulas as any set which derives both a claim and its negation.


The authors define the conjunction of statements $p$ and $q$ as the minimal statement incompatible with every set $X$ incompatible with $\{p,q\}$: the logic obtained from these connectives is proved to be $\textbf{CPL}$, what shows one may recover classical accounts of logic from the notion of incompatibility. And yet, they points out that, in their logic of incompatibility, connectives do not have the semantic sub-formula property, meaning they are not truth-functional. This non-deterministic behavior is frequent among \lfis\ and very clearly present in our logics of incompatibility.

\subsection{Peregrin's ``Brandom's incompatibility semantics''}

In this first article \cite{Peregrin1},  Peregrin studies Brandom and Aker's notion of incompatibility from a more philosophical standpoint. More precisely, his main concerns are tied to the problem of whether formal semantics are truly compatible with pragmatic and inferentialist views. His argument may be summarized as stating that, as merely a model for natural processes, formal semantics can indeed be used by the pragmatist, as long as the distinction between model and what is modeled is not ignored.

Peregrin stresses how the usual approach to incompatibility in logic is to say that sets of formulas $X$ and $Y$ are incompatible whenever their union can deduce anything. Much in line with our own views of the problem, Peregrin defends that reducing incompatibility to inference is, first of all, wasteful, as it disregards the possible intricacies the concept may carry. Second, incompatibility, as a byproduct of inference, becomes dependent on how expressive is the logic we work over.
 
To us, one of the most important developments found in this article is the connection established between incompatibility and Kripke semantics. Peregrin defines a possible world, once a concept of coherence is given, as a maximal coherent set of formulas. The truth of a statement on a given world is then taken to be the belonging of this statement to the world. Reciprocally, he derives a notion of incoherence from a semantics of possible worlds by saying that a set of formulas is incoherent if no formula of this set is validated in any world. Peregrin also points out how the necessity of a statement $p$ in \cite{Brandom} is equivalent, in his semantic of possible words,  to validity. This of course means that Brandom and Aker's take on modal logic trough incompatibility only derives the most basic modalities. He offers an interesting alternative, in order to add richness to those modal logics: a second level of incompatibility, or rather incoherence, a meta-coherence if you will. With this, he is able to characterize modal logics more complex than $S5$.

\subsection{Peregrin's ``Logic as based on incompatibility''}

In this second article by Peregrin of great interest to us, the author defends that the most natural logic to emerge from defining incompatibility through inference is intuitionistic, and that ``Between saying and doing'' reaches a logic of, instead of intuitionistic, classical character only due to Brandom and Aker's method, and not to the nature of incompatibility.

Peregrin~\cite{Peregrin2} defines then an environment that should deal, simultaneously, with inference and incompatibility: a triple $(S, \bot, \vdash)$ where $S$ is a set, $\bot$ is a set of inconsistent subsets of $S$ and $\vdash$ is a deduction relation on $S$. The conditions he requires of these triples are: persistence of incompatibility, and the cut rule and extensiveness of the deduction relation. Peregrin suggests a possible interplay between the two concepts:
\begin{enumerate}
\item if $X$ is inconsistent, it trivializes any deduction;
\item if $X$ deduces $p$, $Y\cup\{p\}$'s inconsistency implies $X\cup Y$'s inconsistency.
\end{enumerate}
The first condition is very in line with what we expect of incompatibility: that it trivializes an argument. The second condition corresponds to defeasibility in \cite{Brandom}. The author sustains that  these conditions reduce incompatibility to inference and vice-versa. Of negation, Peregrin demands: $(1)$ that $\{p, {\sim}p\}$ is inconsistent; and $(2)$ that if $X\cup\{p\}$ is inconsistent, $X$ deduces ${\sim}p$. With this, we see that the flexibility that the author is hoping to obtain by modifying Brandom and Aker's stipulations does not encompass paraconsistency, nor is this his objective, as far as we can see. He seems, instead, more concerned with modal systems, allowing our logics to fill a gap in his approach.

The additional requirement that $X\cup\{{\sim}p\}$ being inconsistent implies $X\vdash p$ is then equivalent to stating that the negation in question is of classical behavior; Peregrin uses this to show that ``Between saying and doing'' could only obtain classical negation. The author suggests that varying the techniques found in his article could lead to relevant and even linear logic, but not paraconsistent ones.

\subsection*{Conclusion and Future Work}

On the realm of logics of incompatibility, we have here provided only the most basic systems we could think of applying such a notion, and many more seem to be plausible. As is done with the consistency operator when dealing with logics of formal inconsistency, axioms concerning the propagation of incompatibility are a fruitful area of study. One more noticeable thing is that we have provided, as semantics for logics with the generalized Sheffer's stroke \inc, both bivaluations and RNmatrices, but we have not yet studied applying to those systems other kind of semantics such as Kripke semantics or finite sets of finite Nmatrices.

In a different direction, it seems that a deeper philosophical analysis of incompatibility is needed, maybe along the lines of~\cite{O'Connor}: what are its epistemological and historical backgrounds? Can it be described through modalities, or does it have a natural relationship with them? How entrenched is this notion in science and, in particular, in mathematics? Is it an intrinsic notion, and if so, is it better described as a primitive or derived notion? Those, and many other topics of study, seem to ramify from the research presented in this article.

\paragraph{Acknowledgements.} 

The first author acknowledges support from  the  National Council for Scientific and Technological Development (CNPq), Brazil
under research grant 306530/2019-8. The second author was supported by a doctoral scholarship from CAPES, Brazil.

\end{document}